\newtheorem{theorem}{Theorem}[section]
\newtheorem{lemma}[theorem]{Lemma}
\newtheorem{corollary}[theorem]{Corollary}
\theoremstyle{definition}\newtheorem{remark}[theorem]{Remark}
\theoremstyle{definition}\newtheorem{example}[theorem]{Example}
\newcommand{\D}{\mathbb{D}}
\newcommand{\C}{\mathbb{C}}
\renewcommand{\H}{\mathbb{H}}
\newcommand{\R}{\mathbb{R}}
\newcommand{\abs}[1]{\left| #1 \right|}
\numberwithin{equation}{section}
\title[Rates of convergence]{On the rates of convergence of orbits in semigroups of holomorphic functions}
\author[D. Betsakos]{Dimitrios Betsakos}
\address{Department of Mathematics, Aristotle University of Thessaloniki, 54124, Thessaloniki, Greece}
\email{betsakos@math.auth.gr}
\author[F. J. Cruz-Zamorano]{Francisco J. Cruz-Zamorano}
\address{Departamento de Matem\'{a}tica Aplicada II and IMUS, Escuela T\'{e}cnica Superior de Ingenier\'{i}a, Universidad de Sevilla, Camino de los Descubrimientos, S/N, 41092, Seville, Spain}
\email{fcruz4@us.es}
\author[K. Zarvalis]{Konstantinos Zarvalis}
\address{Instituto de Matem\'{a}ticas de la Universidad de Sevilla, Avenida de Reina Mercedes, S/N, 41012, Seville, Spain}
\email{kzarvalis@us.es}
\date{}
\thanks{F. J. Cruz-Zamorano was supported by Ministerio de Innovaci\'on y Ciencia, Spain, project PID2022-136320NB-I00 and by Ministerio de Universidades, Spain, through the action Ayuda del Programa de Formaci\'on de Profesorado Universitario, reference FPU21/00258. This work was initiated when F. J. Cruz-Zamorano visited the Aristotle University of Thessaloniki, funded by Ministerio de Ciencia, Innovaci\'on y Universidades, reference EST24/00450.}
\thanks{K. Zarvalis was supported by Junta de Andaluc\'{i}a, grant number QUAL21 005 USE.}
\keywords{Semigroup of holomorphic functions, forward orbit, backward orbit, rate of convergence, harmonic measure, hyperbolic distance}
\subjclass[2020]{Primary: 30D05, 37F44; Secondary: 30C85, 31A15}
\begin{document}

\begin{abstract}
    Let $(\phi_t)$ be a continuous semigroup of holomorphic self-maps of the unit disk $\D$ with Denjoy-Wolff point $\tau\in\overline{\D}$. We study the rate of convergence of the forward orbits of $(\phi_t)$ to the Denjoy-Wolff point by finding explicit bounds for the quantity $|\phi_t(z)-\tau|$, $z\in\overline{\D}$, $t>0$. We further discuss the corresponding rate of convergence for the backward orbits of $(\phi_t)$.
\end{abstract}

\maketitle
\tableofcontents

\section{Introduction}
One-parameter continuous semigroups of holomorphic self-maps of the unit disk have been extensively studied for many years. Even though the principal results of its theory were established quite early, the field has met a considerable flourishing in the past two decades.

A one-parameter continuous semigroup of holomorphic self-maps of the unit disk $\D$ (or from now on, a \textit{semigroup in} $\D$) is a family $(\phi_t)$ of holomorphic functions $\phi_t:\D\to\D$, $t\ge0$, which satisfy the following three conditions:
\begin{enumerate}
\item[(i)] $\phi_0=\textup{id}_\D$;
\item[(ii)] $\phi_{t+s}=\phi_t\circ\phi_s$, for all $t,s\ge0$;
\item[(iii)] $\lim\limits_{t\to 0^+}\phi_t(z)=z$, for all $z\in\D$.
\end{enumerate}
Semigroups are of great interest in their own right. However, they possess strong ties with a multitude of cutting edge objects in the theory of dynamical systems. For comprehensive presentations of the rich theory of semigroups, we refer to the books \cite{Abate,BCDM,ES} and references therein.
 
Fix $z\in\D$. The function $\gamma_z:[0,+\infty)\to\D$ with $\gamma_z(t)=\phi_t(z)$ is called the \textit{forward orbit} of $z$, which is its \textit{starting point}. During the course of the present work, we will interchangeably use the term forward orbit for both the function $\gamma_z$ and the set $\{\phi_t(z):t\ge0\}$ without any confusion.
 
We note here that in case there exists some $t_0>0$ such that the function $\phi_{t_0}$ is a conformal automorphism of the unit disk, then the same is necessarily true for every member of the semigroup and we say that $(\phi_t)$ is actually a \textit{group}; see \cite[Theorem 8.2.4]{BCDM}.
 
Naturally, a focal point in the research is the asymptotic behavior of the orbits as $t\to+\infty$. A stunning result in the dynamics of semigroups is the continuous version of the Denjoy-Wolff Theorem; see \cite[Theorem 5.5.1]{Abate} or \cite[Theorem 8.3.1]{BCDM}. According to this, for every semigroup $(\phi_t)$ which is not a group,
there exists a unique point $\tau\in\overline{\D}$ such that
\begin{equation}\label{eq:DW}
\lim\limits_{t\to+\infty}\gamma_z(t)=\lim\limits_{t\to+\infty}\phi_t(z)=\tau, \quad\text{for all }z\in\D.
\end{equation}
This point $\tau$ is called the \textit{Denjoy-Wolff point} of $(\phi_t)$ and is inextricably linked with the study of the semigroup. In fact, its position on the closure of the unit disk provides a first distinction within the class of semigroups:
\begin{enumerate}
\item[(i)] if $\tau\in\D$, $(\phi_t)$ is called \textit{elliptic},
\item[(ii)] if $\tau\in\partial\D$, $(\phi_t)$ is called \textit{non-elliptic}.
\end{enumerate}
The Denjoy-Wolff Theorem remains valid for non-elliptic groups as well, but not for elliptic groups. 

Non-elliptic semigroups may be further distinguished in several types depending on angular derivatives and hyperbolic geometry. More information about the distinct types of semigroups follows in the next section.

In addition to the forward dynamics, in recent years there has been a considerable advancement in the theory of \textit{backward} dynamics of semigroups, see e.g. \cite{BCDMG, ESZ, GKR, KZ} and \cite[Chapter 13]{BCDM}. Given $z\in\D$, it is always possible to define its \textit{backward orbit} through the following method: the function $\phi_t^{-1}$ is well-defined
at $z$ for every $t$ in an interval of the form $[0,T)$. Let $T_z$ be the supremum of all such numbers $T$. Then, the continuous curve $\widetilde{\gamma}_z:[0,T_z)\to\D$ with
\begin{equation}\label{eq:backward definition}
\widetilde{\gamma}_z(t)=\phi_t^{-1}(z)
\end{equation}
is called the backward orbit of $(\phi_t)$ with \textit{starting point} $z$ (cf. \cite[Definition 13.1.1]{BCDM}). For the purposes of this work and as usual in the literature, we will only deal with backward orbits emanating from points $z\in\D$ satisfying $T_z=+\infty$.

Backward orbits are more volatile than their forward counterparts, presenting more intricacies. The most striking difference is that not all backward orbits converge to the same point. However, we know (cf. \cite[Lemma 13.1.5]{BCDM}) that each non-trivial backward orbit $\widetilde{\gamma}_z$ with $T_z=+\infty$ converges, as $t\to+\infty$, to some point $\sigma\in\partial\D$ (it can even be $\sigma=\tau$) regardless of the type of the semigroup (even if the semigroup is elliptic).

One of the fundamental fields of research in semigroups is the so-called \textit{rate of convergence} of the forward orbits to the Denjoy-Wolff point of the semigroup. We already know that given a semigroup $(\phi_t)$ in $\D$, all its forward orbits converge to $\tau$ because of (\ref{eq:DW}). It is only logical to wonder about how fast this convergence happens. In order to study this problem, we inspect the quantity $|\gamma_z(t)-\tau|=|\phi_t(z)-\tau|$, $z\in\D$, $t>0$, trying to find bounds depending on $t$. In the past few years, there have been a lot of contributions concerning this subject; see \cite{DB-Rate-H, DB-Rate-P, BCDM-Rate, BK, BCK, JLR, KTZ}. See also \cite[Chapter 16]{BCDM}, \cite[Chapter 7]{ES} and references therein. In most of those works, the authors find how these bounds depend on the type of the semigroup, while also demonstrating the sharpness of their results.

A natural next step is to inspect the rates of convergence for backward orbits, as well. Some partial results about this subject appear in  \cite{BCDMG, GKR}. The focal point of the current work is to perform a more thorough study of this topic. Since not all backward orbits converge to the same point, this time we examine the quantity $|\widetilde{\gamma}_z-\sigma|$, where $\sigma=\lim_{t\to+\infty}\widetilde{\gamma}_z(t)$ (recall that we only deal with $z\in\D$ for which $T_z=+\infty$). Our aim is to see how the respective rate is influenced by the various attributes of the semigroup and the backward orbit itself. In addition, one goal is to render explicit the dependence of the bounds on the starting point in order to understand how the position of the starting point impacts the rate. 

In conclusion, the main objective of the present article is to find lower and upper bounds based on $t>0$ for the quantity $|\widetilde{\gamma}_z-\sigma|$, where $\sigma\in\partial\D$ is the point that the backward orbit $\widetilde{\gamma}_z$ lands at. In the meantime, the techniques used in backward orbits allow us to provide certain improvements concerning the already known bounds for forward orbits. All in all, we try to provide a complete picture of both upper and lower bounds for all kinds of orbits and semigroups, all the while finding explicitly the constants that govern these rates. Whenever applicable, we will present the respective sharpness examples.

Because of the vast difference in the techniques, we will treat elliptic and non-elliptic semigroups separately. We start with non-elliptic semigroups and we mainly use techniques based on harmonic measure and hyperbolic geometry. Then, we prove analogue results for elliptic semigroups using arguments that are, in a sense, more straightforward.

\section{Statement of Main Results}
In this section, we are going to state all our main results providing beforehand all the necessary notions required for the comprehension of the statements.

We start with a very important tool that possesses a prominent role in the study of semigroups in $\D$. For a non-elliptic semigroup $(\phi_t)$ in $\D$,  there exists a unique (up to translation) conformal mapping $h:\D\to\C$ such that
\begin{equation}\label{eq:koenigs}
h(\phi_t(z))=h(z)+t, \quad\text{for all }z\in\D\text{ and all }t\ge0.
\end{equation}
This mapping $h$ is called the \textit{Koenigs function} of the semigroup, while the simply connected domain $\Omega:=h(\D)$ is its \textit{Koenigs domain}. The Koenigs function $h$ of a non-elliptic semigroup $(\phi_t)$ greatly facilitates its study since it linearizes the trajectories rendering their examination simpler. An elliptic semigroup also has a Koenigs map which is defined in a similar manner, but for the purposes of this article we will not need it. For more information on the Koenigs function, we refer the interested reader to \cite[Chapter 9]{BCDM}.

We proceed to the classification of semigroups. Given an elliptic semigroup $(\phi_t)$ with Denjoy-Wolff point $\tau\in\D$, by \cite[Theorem 8.3.1]{BCDM} we know that there exists some $\lambda\in\C$ with $\textup{Re}\lambda>0$ such that $\phi_t'(\tau)=e^{-\lambda t}$, for all $t\ge0$. In the non-elliptic case, an analogue statement is true but the position of the Denjoy-Wolff point requires the use of angular derivatives. Indeed, given a non-elliptic semigroup $(\phi_t)$ with Denjoy-Wolff point $\tau\in\partial\D$, there exists a non-negative real number $\lambda$ such that $\angle\lim_{z\to\tau}\phi_t'(z)=e^{-\lambda t}$, for all $t\ge0$. In particular, $(\phi_t)$ is said to be \textit{hyperbolic} whenever $\lambda>0$ and \textit{parabolic} whenever $\lambda=0$. In all types of semigroups, this unique number $\lambda$ is called the \textit{spectral value} of the semigroup.

An essential classification of non-elliptic semigroups can de derived through the Koenigs domain $\Omega$ looking at whether it is included in certain standard horizontal domains (see Theorem \cite[Theorem 9.3.5]{BCDM}).
\begin{enumerate}
    \item[(i)] $(\phi_t)$ is hyperbolic if $\Omega$ is contained in a horizontal strip,
    \item[(ii)] $(\phi_t)$ is called \textit{parabolic of positive hyperbolic step} if $\Omega$ is contained in a horizontal half-plane, but not in a horizontal strip,
    \item[(iii)] and $(\phi_t)$ is called \textit{parabolic of zero hyperbolic step} if $\Omega$ is not contained in any horizontal half-plane.
\end{enumerate}
In addition, if $(\phi_t)$ is hyperbolic with spectral value $\lambda>0$, then the smallest horizontal strip containing $\Omega$ is of width $\pi/\lambda$. The usual definition of hyperbolic step is different, but for our purposes, this counterpart is adequate.

It is frequent in the study of non-elliptic semigroups to proceed to the following normalizations for a semigroup $(\phi_t)$ with Koenigs function $h$ and Koenigs domain $\Omega$:
\begin{enumerate}
    \item[(i)] whenever $(\phi_t)$ is hyperbolic with spectral value $\lambda$, we assume that $\textup{Re}h(0)=0$ and that the smallest horizontal strip containing $\Omega$ is $S=\{w\in\C:0<\textup{Im}w<\pi/\lambda\}$,
    \item[(ii)] whenever $(\phi_t)$ is parabolic of positive hyperbolic step, we assume that $\textup{Re}h(0)=0$ and that the smallest horizontal half-plane containing $\Omega$ is either the upper half-plane $\H$ or the lower half-plane $-\H$,
    \item[(iii)] and whenever $(\phi_t)$ is parabolic of zero hyperbolic step, we assume $h(0)=0$.
\end{enumerate}

Summing up, the four main types of semigroups in $\D$ are elliptic, hyperbolic, parabolic of positive hyperbolic step and parabolic of zero hyperbolic step. There exist further classifications within the classes of hyperbolic semigroups and parabolic semigroups of positive hyperbolic step which we will mention in subsequent sections. Nevertheless, during the course of this article, the four main types will mostly suffice.

In the same vein, the main distinction within the class of backward orbits comes again through notions of hyperbolic geometry. More specifically, a backward orbit $\widetilde{\gamma}_z$ with $T_z=+\infty$ is said to be \textit{regular} provided 
$$\limsup\limits_{t\to+\infty}d_\D(\widetilde{\gamma}_z(t),\widetilde{\gamma}_z(t+1))<+\infty.$$
Whenever the above upper limit is infinite, we say that the backward orbit is \textit{non-regular}.

Consider the set of all points $z\in\D$ satisfying $T_z=+\infty$. A non-empty connected component of the interior of this set is called a \textit{petal} of $(\phi_t)$ (cf. \cite[Definitions 13.3.4 and 13.4.1]{BCDM}). All the backward orbits contained inside a petal are regular and converge to the same point $\sigma$ of the unit circle; see \cite[Proposition 13.4.2]{BCDM}. If $\sigma=\tau$, then the petal is said to be \textit{parabolic}. Otherwise, we call it \textit{hyperbolic}. It is worth mentioning that parabolic petals can only appear in parabolic semigroups. 

Our first main result deals with regular backward orbits of non-elliptic semigroups. We find their rates of convergence to their endpoints and examine how the type of the semigroup and the type of the petal influence the rate. In addition, we find explicitly the constants that govern this convergence in order to understand how the starting point impacts the rate. We will prove the following:

\begin{theorem}[\textbf{Non-elliptic semigroups, regular backward orbits}]
\label{thm:backward}
Let $(\phi_t)$ be a non-elliptic semigroup in $\D$ with Denjoy-Wolff point $\tau$ and Koenigs function $h$. Suppose that $\Delta$ is a petal of $(\phi_t)$ and that all backward orbits contained in $\Delta$ converge to $\sigma\in\partial\D$. Then, for every $z \in \Delta$ and all $t > 1$, the following hold:
\begin{enumerate}[\hspace{0.5cm}\normalfont(a)]
\item If $\Delta$ is parabolic (and hence $\sigma=\tau$) and $(\phi_t)$ is parabolic of zero hyperbolic step, then there exists $\alpha\in\R$ depending on $\Delta$ such that
$$\frac{|z-\tau|\min\{1,|\textup{Im}h(z)-\alpha|\}}{2(1+|z|)}\frac{1}{t} \leq \abs{\widetilde{\gamma}_z(t)-\tau} \leq 4\sqrt{2}\pi\left(\abs{h'(0)}^{1/2}+\abs{h(z)}^{1/2}\right)\dfrac{1}{\sqrt{t}}.$$
\item If $\Delta$ is parabolic (and hence $\sigma=\tau$) and $(\phi_t)$ is parabolic of positive hyperbolic step, then there exists $\alpha\in\R$ depending on $\Delta$ such that
$$\frac{|z-\tau|\min\{1,|\textup{Im}h(z)-\alpha|\}}{2(1+|z|)}\frac{1}{t} \leq\abs{\widetilde{\gamma}_z(t)-\tau} \leq 8\max\{\abs{h(0)},\abs{h(z)}\}\dfrac{1}{t}.$$
\item If $\Delta$ is hyperbolic (and hence $\sigma\ne\tau$), then there exist $\nu \in (-\infty,0)$ depending on $\sigma$ and $\alpha\in\R$ depending on $\Delta$ such that for every $\epsilon \in (0,-\nu)$ there exists $T=T(z,\epsilon)>0$ (which can be explicitly stated) such that
$$\frac{1-|z|}{1+|z|}\sin^2\left[\nu(\textup{Im}h(z)-\alpha)\right]e^{\nu t} \leq \abs{\widetilde{\gamma}_z(t)-\sigma} \leq 16e^{-(\nu+\epsilon)T}e^{(\nu+\epsilon)t}.$$
\end{enumerate}
\end{theorem}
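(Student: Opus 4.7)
My plan is to pass to the Koenigs model and use the identity $\widetilde{\gamma}_z(t)=h^{-1}(h(z)-t)$, so that the whole theorem reduces to estimating $|h^{-1}(w_t)-\sigma|$ for $w_t:=h(z)-t$ moving along a horizontal ray in $\Omega$. For each petal $\Delta$, the image $h(\Delta)$ is a simply connected, backward-translation-invariant subregion of $\Omega$ whose asymptotic axis is a horizontal line $\{\operatorname{Im}w=\alpha\}$, and $\sigma$ is the prime end of $\Omega$ accessed along that line. In parabolic petals $\sigma=\tau$ and each $\phi_t$ has angular derivative $1$ at $\sigma$; in a hyperbolic petal, $\sigma$ is a boundary repelling fixed point of every $\phi_t$ with angular derivative $e^{-\nu t}$, which identifies the constant $\nu<0$.

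For the upper bounds I would exploit conformal invariance of harmonic measure, bounding $|\widetilde{\gamma}_z(t)-\sigma|$ by the harmonic measure at $w_t$, in $\Omega$, of the portion of $\partial\Omega$ far from the prime end $\sigma$, and then comparing with an explicit model domain. In case (c), $\Omega$ lies in a horizontal strip of width $\pi/\lambda$ (hyperbolic) or in a half-plane (parabolic positive-step), and an explicit computation on these model domains produces exponential decay $e^{(\nu+\epsilon)t}$; the threshold $T(z,\epsilon)$ records how long it takes $h(\Delta)$ to be well-approximated by its asymptotic strip of width $\pi/|\nu|$, which I would make quantitative through a Carathéodory-kernel argument for the translates $h(\Delta)-s$. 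In case (b), $\Omega\subset\H$ and Beurling's projection inequality reduces the estimate to harmonic measure in $\H$, an arctangent that decays like $\operatorname{Im}w_t/|w_t|\lesssim 1/t$ with reference length $\max\{|h(0)|,|h(z)|\}$. In case (a), where no half-plane contains $\Omega$, Beurling–Nevanlinna on a slit-plane configuration still yields the classical $1/\sqrt{t}$ decay, and the Koebe $1/4$-theorem applied to $h$ at the origin converts the implicit constant into the advertised $|h'(0)|^{1/2}+|h(z)|^{1/2}$.

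For the lower bounds I would apply Julia's lemma to each $\phi_t$ at $\sigma$, which, setting $w=\widetilde{\gamma}_z(t)$ and $\phi_t(w)=z$, produces
$$
\frac{|z-\sigma|^{2}}{1-|z|^{2}}\;\le\;\phi_t'(\sigma)\,\frac{|\widetilde{\gamma}_z(t)-\sigma|^{2}}{1-|\widetilde{\gamma}_z(t)|^{2}}.
$$
In case (c) one has $\phi_t'(\sigma)=e^{-\nu t}$, and rearranging together with the non-tangential approach along a hyperbolic petal (so that $1-|\widetilde{\gamma}_z(t)|^{2}\gtrsim|\widetilde{\gamma}_z(t)-\sigma|$) yields the exponential lower bound; the factor $\sin^{2}[\nu(\operatorname{Im}h(z)-\alpha)]$ arises as the squared Euclidean length of the horocyclic cross-cut transported through the explicit conformal map $\zeta\mapsto\sin(\nu\zeta)$ between a strip of width $\pi/|\nu|$ and a half-plane. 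In the parabolic-petal cases (a) and (b), $\phi_t'(\sigma)=1$, so Julia's lemma alone only preserves the horocycle; the extra $1/t$ must be extracted from the Koenigs identity $|h(\widetilde{\gamma}_z(t))|\ge t-|h(z)|$ combined with a one-sided growth estimate for $h$ near $\tau$ forcing $1-|\widetilde{\gamma}_z(t)|\gtrsim 1/t$. The prefactor $|z-\tau|\min\{1,|\operatorname{Im}h(z)-\alpha|\}/(2(1+|z|))$ then records the Julia-horocycle data of the starting point, with the minimum reflecting the vertical gap between the starting line and the petal's axis.

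The main technical obstacle is case (a): without any containing half-plane, we are forced to run Beurling–Nevanlinna on a slit-plane configuration, and calibrating the resulting constant against the normalization $\operatorname{Re}h(0)=0$ to produce the explicit $|h'(0)|^{1/2}+|h(z)|^{1/2}$ (rather than a mere asymptotic rate) is delicate. A parallel difficulty is producing the explicit threshold $T(z,\epsilon)$ in case (c), which reduces to a quantitative Carathéodory-convergence statement for $h(\Delta)-s$ as $s\to+\infty$, and which I expect to settle by a uniform-on-compacta normal-families argument for the associated Riemann maps.
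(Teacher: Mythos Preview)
Your broad outline---harmonic measure for the upper bounds, Julia's lemma plus hyperbolic geometry for the lower bounds---matches the paper's strategy, but the concrete implementations diverge in ways that matter for obtaining the explicit constants.

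\textbf{Upper bounds.} You propose to bound $|\widetilde{\gamma}_z(t)-\sigma|$ by the harmonic measure \emph{at the moving point} $w_t=h(z)-t$ of ``the portion of $\partial\Omega$ far from $\sigma$''. The paper does something different: it uses a diameter estimate (FitzGerald--Rodin--Warschawski/Solynin) to bound $|\widetilde{\gamma}_z(t)-\sigma|$ by $2\pi\,\omega(h(0),B_t,\Omega\setminus B_t)$, where $B_t=\{h(z)-s:s\ge t\}$ is the tail of the orbit itself and the base point is the \emph{fixed} reference $h(0)$. This is the bridge between Euclidean distance and harmonic measure; your description does not supply an analogous bridge, and without it the passage from ``harmonic measure at $w_t$'' to ``$|h^{-1}(w_t)-\sigma|$'' with explicit constants is not clear. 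In case~(b) the paper then uses the Strong Markov Property (not Beurling projection) to pass to the projection $B_t^*$ on $\partial\H$. In case~(c) the paper does not run a Carath\'eodory-kernel or normal-families argument: it locates boundary points $p_t^{\pm}\in\partial\Omega$ on the vertical line through $h(z)-t$, whose separation $d_t$ decreases to $-\pi/\nu$, and then applies Beurling's extremal-distance estimate together with the serial rule on the rectangle bounded by the half-lines through $p_T^{\pm}$. The threshold $T(z,\epsilon)$ is simply the first time $d_T\le -\pi/(\nu+\epsilon)$; this is entirely constructive and avoids any compactness argument.

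\textbf{Lower bounds.} For parabolic petals the paper, like you, starts from Julia's lemma at $\tau$ (derivative $1$), obtaining $|\widetilde{\gamma}_z(t)-\tau|\ge\frac{|z-\tau|}{1+|z|}e^{-d_\D(z,\widetilde{\gamma}_z(t))}$. But the $1/t$ is \emph{not} extracted from a growth estimate on $h$; instead one uses the inclusion $h(\Delta)\subset\Omega$ to dominate $d_\Omega(h(z),h(z)-t)$ by the hyperbolic distance in the half-plane $h(\Delta)=\{\operatorname{Im}w>\alpha\}$, which is computed explicitly and is $\le\log\frac{2t}{\min\{1,|\operatorname{Im}h(z)-\alpha|\}}$. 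For the hyperbolic petal in~(c) the paper does \emph{not} invoke Julia at $\sigma$; it uses the cruder bound $|\widetilde{\gamma}_z(t)-\sigma|\ge 1-|\widetilde{\gamma}_z(t)|$ and then again computes $d_{h(\Delta)}(h(z),h(z)-t)$ explicitly in the strip $h(\Delta)$ via the conformal map $w\mapsto e^{-\nu(w-i\alpha+i\pi/(2\nu))}$ onto the right half-plane. The $\sin^2[\nu(\operatorname{Im}h(z)-\alpha)]$ factor falls out of that exact computation; your route through Julia at the repelling point plus non-tangential convergence is plausible but would not produce this constant so directly.
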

The real number $\alpha$ and the positive number $T$ depend on the geometry of the Koenigs domain of $(\phi_t)$ and of the image $h(\Delta)$. Their actual meaning will be explained during the proof. In addition, the number $\nu$ concerns an important notion in backward dynamics. More information on backward orbits and the exact definitions of petals follow in Section 2. Furthermore, through a series of examples, we will demonstrate the sharpness of all the above bounds.

To the authors' best knowledge not many results with regard to the rates of convergence of backward orbits have appeared in the literature. Some partial results were analyzed in \cite[Proposition 4.20]{BCDMG}, and a recent discussion appeared in \cite[Subsection 6.1]{GKR}. However, these works only deal with regular orbits on hyperbolic petals. The main advantange of our work is that we can push our techniques further, proving similar results even for non-regular orbits. As a matter of fact:

\begin{theorem}[\textbf{Non-elliptic semigroups, non-regular backward orbits}]\label{thm:non-regular}
    Let $(\phi_t)$ be a non-elliptic semigroup in $\D$ with Denjoy-Wolff point $\tau$ and Koenigs function $h$. Let $z\in\D$ and suppose that $\widetilde{\gamma}_z:[0,+\infty)\to\D$ is a non-regular backward orbit for $(\phi_t)$. Then, for all $t>1$, the following hold:
    \begin{enumerate}
        \item[\textup{(a)}] If $\widetilde{\gamma}_z([0,+\infty))$ lies on the boundary of a parabolic petal (and hence $\widetilde{\gamma}_z$ converges to $\tau$) and $(\phi_t)$ is parabolic of zero hyperbolic step, then
        $$|\widetilde{\gamma}_z(t)-\tau|\le4\sqrt{2}\pi\left(|h'(0)|^{1/2}+|h(z)|^{1/2}\right)\frac{1}{\sqrt{t}}.$$
        \item[\textup{(b)}] If $\widetilde{\gamma}_z([0,+\infty))$ lies on the boundary of a parabolic petal (and hence $\widetilde{\gamma}_z$ converges to $\tau$) and $(\phi_t)$ is parabolic of positive hyperbolic step, then
        $$|\widetilde{\gamma}_z(t)-\tau|\le8\max\{|h(0)|,|h(z)|\}\frac{1}{t}.$$
        \item[\textup{(c)}] If $\widetilde{\gamma}_z([0,+\infty))$ lies on the boundary of a hyperbolic petal and $\widetilde{\gamma}_z$ converges to $\sigma$ (and hence $\sigma\in\partial\D\setminus\{\tau\}$), then there exists $\nu\in(-\infty,0)$ depending on $\sigma$ such that for every $\epsilon>0$ there exists $C=C(z,\epsilon)>0$ (which can be explicitly stated) such that
        $$|\widetilde{\gamma}_z(t)-\sigma|\le C e^{(\nu+\epsilon)t}.$$
        \item[\textup{(d)}] If $\widetilde{\gamma}_z([0,+\infty))$ does not lie on the boundary of any petal and $\widetilde{\gamma}_z$ converges to $\sigma$ (and hence $\sigma\in\partial\D\setminus\{\tau\}$), then for every $\epsilon > 0$ there exists $C=C(z,\epsilon) > 0$ (which can be explicitly stated) such that
        $$|\widetilde{\gamma}_z(t)-\sigma|\le Ce^{-\epsilon t}.$$
    \end{enumerate}
\end{theorem}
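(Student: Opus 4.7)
The plan is to treat the four cases separately: cases (a), (b), (c) by an approximation argument from inside the corresponding petal, reducing to Theorem \ref{thm:backward}, while case (d) is the main obstacle and will require a direct analysis in the Koenigs domain.

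For cases (a) and (b), although $\widetilde{\gamma}_z$ is non-regular, by assumption its trace lies on the boundary (inside $\D$) of a parabolic petal $\Delta$. The plan is to fix a sequence $(z_n)\subset\Delta$ with $z_n\to z$ in $\D$. Since $h$ linearizes the semigroup, $h(\widetilde{\gamma}_{z_n}(t))=h(z_n)-t$, and continuity of $h^{-1}:\Omega\to\D$ gives $\widetilde{\gamma}_{z_n}(t)\to\widetilde{\gamma}_z(t)$ for every fixed $t>0$. Applying Theorem \ref{thm:backward}(a) or (b) to each $z_n$ and passing to the limit yields the desired estimate, since the corresponding upper bounds depend continuously on $|h(z_n)|$. (The lower bounds in Theorem \ref{thm:backward} cannot be transferred this way, which is consistent with the statement: no lower bound is claimed for non-regular orbits.)

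Case (c) follows by the same approximation plan; the extra care is to verify that the constant $T=T(z,\epsilon)$ from Theorem \ref{thm:backward}(c) behaves continuously as $z_n\to z$ inside $\Delta$. Tracking the explicit geometric description of $T$ (which, per the theorem statement, can be written down from the position of $h(z)$ and of the image strip $h(\Delta)$ in $\Omega$) shows that $T(z_n,\epsilon)$ converges to a finite limit, so the prefactor $16\,e^{-(\nu+\epsilon)T}$ absorbs into a well-defined constant $C(z,\epsilon)>0$.

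For case (d), the main obstacle, the approximation approach fails because $\widetilde{\gamma}_z$ does not lie on the closure of any petal. The plan is to work directly in $\Omega$: the orbit corresponds to the horizontal half-line $\{h(z)-t:t\ge 0\}\subset\Omega$, and the hypothesis that this half-line avoids every petal forces the approach to the prime end above $\sigma$ to take place inside an arbitrarily thin channel of $\Omega$. A standard harmonic-measure bound gives $|\widetilde{\gamma}_z(t)-\sigma|\le \pi\,\omega(\widetilde{\gamma}_z(t),A_\sigma,\D)$ for a short arc $A_\sigma\subset\partial\D$ around $\sigma$; transferring this by conformal invariance to $\Omega$ and applying an extremal-length or Beurling-type estimate for the thin channel traversed at level $\textup{Re}\,w=\textup{Re}\,h(z)-t$ should yield super-exponential decay of the required form. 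The hard part will be to quantify the thinness of the channel sharply enough to produce the bound $C\,e^{-\epsilon t}$ for every $\epsilon>0$ and to exhibit an explicit constant $C(z,\epsilon)$.
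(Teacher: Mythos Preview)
For parts (a), (b), (c), your approximation argument is correct but unnecessarily indirect. The paper simply observes (see also Remark~\ref{remark1}) that the proofs of the upper bounds in Theorem~\ref{thm:backward}(a), (b), (c) never actually use that $z$ lies inside the petal $\Delta$: they only use that the backward orbit exists, i.e., that the half-line $\{h(z)-s:s\ge 0\}$ is contained in $\Omega$. This remains true when $z\in\partial\Delta\cap\D$ with $T_z=+\infty$, so the same estimates apply verbatim. Your approximation route works too, but tracking the behaviour of $T(z_n,\epsilon)$ in case (c) is more effort than simply rereading the original argument and noting which hypotheses were used.

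For part (d), your overall strategy---thin channel in $\Omega$, Beurling/extremal-length estimate---matches the paper's, but the specific harmonic-measure inequality you propose is wrong: if $A_\sigma$ is an arc around $\sigma$ and $\widetilde{\gamma}_z(t)\to\sigma\in A_\sigma$, then $\omega(\widetilde{\gamma}_z(t),A_\sigma,\D)\to 1$, not $0$, so this cannot bound $|\widetilde{\gamma}_z(t)-\sigma|$ from above. The paper instead uses the diameter estimate (Lemma~\ref{Solynin}) in the form $|\widetilde{\gamma}_z(t)-\sigma|\le 2\pi\,\omega(h(0),B_t,\Omega\setminus B_t)$, where $B_t=\{h(z)-s:s\ge t\}$ is the tail of the orbit and the harmonic measure is taken from a \emph{fixed} base point. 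From there, the fact that $\delta_\Omega(h(z)-t)\to 0$ (which is precisely what ``$z$ lies in no petal closure'' gives) lets one enclose $\Omega$ in a two-slit domain with slits $Q_T^{\pm}$ at vertical separation $d_T$, replace $B_t$ by a short vertical crosscut $D_t$, and apply Beurling's estimate (Theorem~\ref{thm:Beurling}) together with the serial rule to obtain $|\widetilde{\gamma}_z(t)-\sigma|\le 16\,e^{\pi T/d_T}e^{-\pi t/d_T}$ for $t>T$. Since $d_T\to 0$, choosing $T$ with $d_T<\pi/\epsilon$ yields $Ce^{-\epsilon t}$. With this corrected starting inequality in place of yours, the rest of your outline for (d) is essentially what the paper does.
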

We avoid writing the explicit constants in the last two cases for the sake of legibility. They will be stated inside the proofs for better comprehension. More information on non-regular backward orbits and the reason why we proceeded to the above distinction follow in subsequent sections. Furthermore, the reader may notice that we did not provide lower bounds. Through Example \ref{ex:non-regular} we will verify that non-regular backward orbits of non-elliptic semigroups may converge arbitrarily fast to the point at which they are going to land.

Other than producing the rates for the backward orbits of non-elliptic semigroups, the techniques used in the proofs of the previous results, allow us to improve the already known results for forward orbits. The dependence of the rates of forward orbits with respect to $t$ and the corresponding sharpness appear in the literature (see for instance \cite{DB-Rate-H,DB-Rate-P,BCDM-Rate}). However, our goal is to find explicit constants that govern the convergence to the Denjoy-Wolff point in order to better comprehend exactly what parameters influence the rate of the convergence. 

Before stating our next result, we need to mention some results concerning the boundary behavior of semigroups. A theorem proved by Gumenyuk in \cite[Theorem 3.1]{Gumenyuk} yields that for every $t\ge0$, the function $\phi_t$ has finite non-tangential limit at {\it every} point of the unit circle and that for every $\zeta\in\partial\D$ the function
$$[0,+\infty)\ni t\mapsto\phi_t(\zeta):=\angle\lim\limits_{z\to\zeta}\phi_t(z)$$
is still continuous. Therefore, it makes sense to enquire about the forward orbit $\gamma_z$ for every point $z\in\overline{\D}$. On the contrary, the backward orbit $\widetilde{\gamma}_z$ is not always well-defined for $z\in\partial\D$.

In connection with the result of Gumenyuk, we should point out that the angular limit $\angle\lim_{z \to \zeta}h(z)$ exists in $\C \cup \{\infty\}$ for all $\zeta \in \partial\D$; see \cite[Corollary 11.1.7]{BCDM}. Moreover, by \cite[Proposition 13.6.1]{BCDM}, we know that $\angle\lim_{z \to \zeta}h(z) = \infty$ if and only if $\zeta \in \partial\D$ is a \textit{boundary fixed point} of $(\phi_t)$, that is, $\angle\lim_{z \to \zeta}\phi_t(z) = \zeta$ for all $t \geq 0$. For non-elliptic semigroups, one such boundary fixed point is always the Denjoy-Wolff point of the semigroup. Therefore, due to its convenience, we define
$$h(\zeta) := \angle\lim_{z \to \zeta}h(z) \in \C$$
whenever $\zeta \in \partial\D$ is not a boundary fixed point of $(\phi_t)$ and $h(\zeta):=\infty$ whenever $\zeta\in\partial\D$ is a boundary fixed point. In particular, if $(\phi_t)$ is non-elliptic and $\zeta$ is a boundary fixed point different than the Denjoy-Wolff point, then we may also write $\textup{Re}h(\zeta):=-\infty$ (cf. \cite[Proposition 13.6.2]{BCDM}).

Now we are ready to proceed to our theorems about forward orbits of non-elliptic semigroups.
\begin{theorem}[\textbf{Non-elliptic semigroups, forward orbits, upper bounds}]\label{thm:forward}
Let $(\phi_t)$ be a non-elliptic semigroup in $\D$ with Denjoy-Wolff point $\tau$ and Koenigs function $h$. Then, for every $z \in \overline{\D}$ and all $t > 0$, the following hold:
\begin{enumerate}[\hspace{0.5cm}\normalfont(a)]
\item If $(\phi_t)$ is parabolic of zero parabolic step, then
$$\abs{\gamma_z(t)-\tau} \leq 4\sqrt{2}\pi\left(\abs{h'(0)}^{1/2}+\abs{h(z)}^{1/2}\right)\dfrac{1}{\sqrt{t}}.$$

\item If $(\phi_t)$ is parabolic of positive hyperbolic step, then
$$\abs{\gamma_z(t)-\tau} \leq 8 \max\{\abs{h(0)},\abs{h(z)}\}\dfrac{1}{t}.$$
\item If $(\phi_t)$ is hyperbolic with spectral value $\lambda \in (0,+\infty)$, then
$$\abs{\gamma_z(t)-\tau} \leq 16e^{-\lambda\mathrm{Re}h(z)}e^{-\lambda t}.$$
\end{enumerate}
\end{theorem}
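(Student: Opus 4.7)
The plan is to use the Koenigs function $h$ to transfer the problem from $\D$ to the Koenigs domain $\Omega=h(\D)$ via the linearization $\phi_t(z)=h^{-1}(h(z)+t)$. Under $h$, the forward orbit becomes the horizontal half-line $s\mapsto h(z)+s$ tending to $+\infty$ in $\Omega$, and the Denjoy-Wolff point $\tau$ corresponds to the prime end of $\Omega$ at $+\infty$. So the problem reduces to estimating $|h^{-1}(w)-\tau|$ as $\textup{Re}\,w\to+\infty$. The basic technical ingredient is a harmonic-measure bound: $|w-\tau|$ is controlled by the harmonic measure at $w$ of an arc of $\partial\D$ bounded away from $\tau$, and by conformal invariance this equals the harmonic measure in $\Omega$ of the corresponding boundary set, evaluated at $h(z)+t$. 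This is precisely the sort of estimate that the techniques developed for Theorems~\ref{thm:backward} and \ref{thm:non-regular} supply.

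In case (c), since $\Omega\subset S=\{0<\textup{Im}\,w<\pi/\lambda\}$, monotonicity of harmonic measure together with the explicit computation in the strip $S$ (via the conformal map $w\mapsto e^{\lambda w}:S\to\H$) gives exponential decay of order $e^{-\lambda\textup{Re}(h(z)+t)}$, producing the claimed bound with the starting-point factor $e^{-\lambda\textup{Re}\,h(z)}$. In case (b), subordinating to a horizontal half-plane $\H$ rather than a strip yields harmonic measure decay of order $1/\textup{Re}(h(z)+t)$; the $1/t$ rate with the max in the numerator then follows by combining $\textup{Re}(h(z)+t)\ge t-\max\{|h(0)|,|h(z)|\}$ with the chosen normalization $\textup{Re}\,h(0)=0$.

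Case (a) is harder because $\Omega$ is not contained in any horizontal half-plane, so no canonical subordination is available. Instead the plan is to apply Koebe's $1/4$ theorem and classical distortion estimates directly to $h^{-1}$: the Koebe disc about $0=h(0)$ has radius $|h'(0)|/4$, and an analogous scaling argument based at the point $h(z)$ yields the characteristic $1/\sqrt{t}$ parabolic zero hyperbolic step rate, with the factor $|h'(0)|^{1/2}+|h(z)|^{1/2}$ arising from accounting for the scale both at the origin of $\Omega$ and at the starting point.

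The main obstacle is pinning down the exact numerical constants $16$, $8$, $4\sqrt{2}\pi$ by carefully tracking each inequality without losing factors along the way; case (a) is the most delicate due to the minimal geometric constraint on $\Omega$. A secondary point is the extension of the bounds to boundary starting points $z\in\partial\D$: this is justified by the theorem of Gumenyuk on continuity of $t\mapsto\phi_t(\zeta)$ together with the existence of the angular limit $h(\zeta)\in\C\cup\{\infty\}$ at every boundary point, both recalled in the paragraphs just preceding the theorem.
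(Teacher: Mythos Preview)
Your overall framework---linearize via $h$, transfer to $\Omega$, and estimate with harmonic measure---matches the paper's, but the implementation diverges in two important ways.

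First, the harmonic-measure mechanism is not the one you describe. The paper does \emph{not} control $|\phi_t(z)-\tau|$ by the harmonic measure \emph{at} $\phi_t(z)$ of an arc of $\partial\D$ away from $\tau$. Instead it bounds $|\gamma_z(t)-\tau|$ by the diameter of the orbit \emph{tail} $a_t=\{\phi_s(z):s\ge t\}$, then invokes a diameter--harmonic-measure comparison (Lemma~\ref{Solynin}) to get
\[
|\gamma_z(t)-\tau|\le 2\pi\,\omega(0,a_t,\D\setminus a_t)=2\pi\,\omega(h(0),A_t,\Omega\setminus A_t),
\]
evaluated at the \emph{base point} $h(0)$, with $A_t=\{h(z)+s:s\ge t\}$. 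All three cases start from this single inequality. For (b) and (c) the paper then subordinates $\Omega$ to $\H$ or to the strip $S$ as you say, but the passage from $\omega(h(0),A_t,\H\setminus A_t)$ to an explicit quantity is done via the \emph{Strong Markov Property}: project $A_t$ onto $A_t^\ast\subset\partial\H$ (resp.\ $\partial S$) and show $\omega(h(0),A_t,\H\setminus A_t)\le 2\,\omega(h(0),A_t^\ast,\H)$ (resp.\ $\le 4\,\omega(h(0),A_t^\ast,S)$). These factors $2$ and $4$ are exactly what produce the constants $8$ and $16$. Your sketch omits this step, and without it the exact constants do not come out.

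Second, your plan for case~(a) has a genuine gap. You propose Koebe/distortion applied to $h^{-1}$, but the paper's argument is quite different and this is where the real work lies. Starting from $\omega(h(0),A_t,\Omega\setminus A_t)$, the paper picks a nearest boundary point $q\in\partial\Omega$ with $|q|=\delta_\Omega(0)$, introduces the auxiliary half-line $L_q=\{q-s:s\ge0\}$, and uses $\Omega\setminus A_t\subset\C\setminus(A_t\cup L_q)$ together with the Beurling--Nevanlinna projection theorem (Theorem~\ref{BeurlingProjection}) to reduce to the harmonic measure of a radial slit in $\D$. A hyperbolic-distance estimate for $d_{\C\setminus A_t}(0,q)$ then yields the $\sqrt{\delta_\Omega(0)/t}$ behaviour, and only at the very end is $\delta_\Omega(0)\le|h'(0)|$ invoked. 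A direct Koebe/distortion argument on $h^{-1}$ does not obviously produce the $1/\sqrt t$ rate, since $\tau\in\partial\D$ and there is no half-plane to subordinate to; you would need to supply a concrete mechanism here, and none is indicated.
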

Note here that in case $z$ is a boundary fixed point, then the result is trivial. Indeed, if $z=\tau$, then $|\gamma_\tau(t)-\tau|=|\tau-\tau|=0$, for all $t>0$, and the result holds at once. On the other hand, if $z=\sigma\ne\tau$ is a boundary fixed point, then $|h(z)|=|h(\sigma)|=+\infty$ and $\textup{Re}h(z)=\textup{Re}h(\sigma)=-\infty$, while the quantity $|\gamma_z(t)-\tau|$ is always equal to $|\sigma-\tau|\le 2$. Hence the result holds trivially as well.

At this point, we remind that upper bounds are already known; see for instance \cite{BCDM-Rate}. Our contribution lies in providing constants that render the dependence of the upper bounds on the starting point explicit. In our next theorem, we find sharp lower bounds for the quantity $|\gamma_z(t)-\tau|$ for non-elliptic semigroups. This time, the bounds can be presented in a unified way.

\begin{theorem}[\textbf{Non-elliptic semigroups, forward orbits, lower bound}]\label{thm:lower}
    Let $(\phi_t)$ be a non-elliptic semigroup in $\D$ with Denjoy-Wolff point $\tau$ and spectral value $\lambda\ge0$. Let $z\in\D$  and $\epsilon>0$. Then there exists a constant $c=c(z,\epsilon)>0$ (which can be explicitly stated) such that
    $$|\gamma_z(t)-\tau|\ge ce^{-(\lambda+\epsilon)t}, \quad\text{for all }t\ge0.$$
\end{theorem}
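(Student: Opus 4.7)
The starting point is the chain
\[
|\phi_t(z) - \tau| \;\geq\; 1 - |\phi_t(z)| \;\geq\; e^{-d_\D(0, \phi_t(z))},
\]
where the first inequality uses $|\tau|=1$ and the second uses $d_\D(0,w) = \log\frac{1+|w|}{1-|w|}$, so that $1-|w| = (1+|w|)\,e^{-d_\D(0,w)} \geq e^{-d_\D(0,w)}$. Hence it is enough to establish the upper bound
\[
d_\D(0, \phi_t(z)) \;\leq\; (\lambda + \epsilon)\,t + K(z,\epsilon), \qquad t \geq 0, \qquad (\ast)
\]
from which $|\phi_t(z) - \tau| \geq e^{-K(z,\epsilon)}\, e^{-(\lambda+\epsilon)t}$ follows, giving the theorem with the explicit constant $c(z,\epsilon) = e^{-K(z,\epsilon)}$.

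To prove $(\ast)$, I would pass to the Koenigs domain via the isometry $h$, writing $d_\D(0,\phi_t(z)) = d_\Omega(h(0), h(z) + t)$, and construct an explicit path in $\Omega$ from $h(0)$ to $h(z)+t$ realizing the bound. For hyperbolic semigroups, $\Omega$ lies in a horizontal strip $S_\lambda$ of width $\pi/\lambda$, whose hyperbolic metric $\lambda/\sin(\lambda\,\textup{Im}\,w)$ attains its minimum value $\lambda$ on the midline $\textup{Im}\,w = \pi/(2\lambda)$. The minimality of the containing strip combined with the right-translation invariance $\Omega + [0,+\infty) \subset \Omega$ forces $\Omega$ to meet every horizontal line in $S_\lambda$; hence for each $\epsilon > 0$ one can choose a height $y_\epsilon$ close to $\pi/(2\lambda)$ with $\lambda/\sin(\lambda y_\epsilon) \leq \lambda + \epsilon$ and a starting point $w_\epsilon \in \Omega$ from which the horizontal ray at height $y_\epsilon$ lies entirely in $\Omega$. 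A path in $\Omega$ from $h(0)$ to $w_\epsilon$ (of bounded hyperbolic length $K$) followed by this ray up to $h(z)+t$ has hyperbolic length at most $(\lambda+\epsilon)\,t + K$, establishing $(\ast)$. For parabolic semigroups ($\lambda = 0$), the Koenigs domain is not contained in any horizontal strip; a classical conformal comparison with the canonical half-plane or slit-plane model then yields $d_\Omega(h(0), h(z)+t) = o(t)$, from which $(\ast)$ holds for every $\epsilon > 0$.

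The main technical obstacle lies in the explicit construction of the path in the hyperbolic case when $\Omega \subsetneq S_\lambda$ has intricate boundary inside the strip. Identifying the height $y_\epsilon$, the starting point $w_\epsilon$, and the hyperbolic length of the initial bridge from $h(0)$ to $w_\epsilon$ in terms of the Koenigs data $h$ will yield the explicit form of $K(z,\epsilon)$, and hence of $c(z,\epsilon) = e^{-K(z,\epsilon)}$, promised in the statement.
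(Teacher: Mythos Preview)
Your overall strategy---reduce to an upper bound on the hyperbolic distance in the Koenigs domain and exponentiate---is exactly what the paper does, and the reduction $|\phi_t(z)-\tau|\ge 1-|\phi_t(z)|\ge e^{-d_\D(0,\phi_t(z))}$ (in your normalization) is correct.  The gap is in how you estimate $d_\Omega(h(0),h(z)+t)$ from above.

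You propose to integrate along a horizontal ray in $\Omega$ at a height $y_\epsilon$ where the density of the \emph{containing strip} $S_\lambda\supset\Omega$ is close to $\lambda$.  But domain monotonicity goes the other way: $\Omega\subset S_\lambda$ implies $\lambda_\Omega\ge\lambda_{S_\lambda}$ and $d_\Omega\ge d_{S_\lambda}$.  So knowing that the strip density along your ray is at most $\lambda+\epsilon$ gives you no control on the hyperbolic length \emph{in $\Omega$} of that ray, which is what you need.  The same objection applies to the parabolic case: comparison with a half-plane or slit-plane \emph{containing} $\Omega$ yields only a lower bound on $d_\Omega$, not $d_\Omega(h(0),h(z)+t)=o(t)$.

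The paper fixes this by working from the inside.  Since $\Omega$ is convex in the positive direction, the length $\ell_t$ of the vertical cross-section of $\Omega$ through $h(z)+t$ is non-decreasing in $t$ and tends to $\pi/\lambda$ (with $\pi/0=+\infty$ in the parabolic case).  Hence for every $\epsilon>0$ there is $T>0$ and a horizontal \emph{half-strip} $S_T\subset\Omega$ of width $\pi/(\lambda+\epsilon)$ with $h(z)+T$ on its vertical side.  Now the inequality $d_\Omega\le d_{S_T}$ goes the right way; the paper then splits $d_\Omega(h(z),h(z)+t)$ by the triangle inequality into a bounded initial piece (from $h(z)$ to $h(z)+T+1$, estimated via $\int ds/\delta_\Omega$) and a piece inside $S_T$, which is computed explicitly by mapping the half-strip conformally onto $\C_0$.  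This yields $d_\Omega(h(z),h(z)+t)\le \tfrac{\lambda+\epsilon}{2}\,t+K(z,\epsilon)$ (paper's normalization), and the argument is uniform across the hyperbolic and parabolic cases.  Your ``bridge plus ray'' picture is morally right; what is missing is that the comparison domain must be \emph{contained in} $\Omega$, and the half-strip (not the full strip) is the natural object that achieves this.
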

The constant $c$ depends on the geometry of the Koenigs domain of $(\phi_t)$. We avoid its explicit mention at this point in order to explain it more effectively during the respective proof. As we will see in Example \ref{ex:forward-orbits-lower-bound-sharpness}, the bound above is sharp for parabolic semigroups. For hyperbolic semigroups, the rate with respect to $t$ and its sharpness are known (see \cite{DB-Rate-H}).

Finally, we will work with the rate of convergence of forward and backward orbits for elliptic semigroups, as well. Since the Denjoy-Wolff points of elliptic semigroups lie inside the unit disk, we will only work with starting points $z\in\D$, because any forward orbit $\gamma_z$ will eventually enter $\D$ in order to reach $\tau$, while backward orbits are not necessarily well-defined for starting points on the unit circle. 
\begin{theorem}[\textbf{Elliptic semigroups, all orbits}]\label{thm:elliptic}
    Let $(\phi_t)$ be an elliptic semigroup in $\D$, not a group, with Denjoy-Wolff point $\tau$ and spectral value $\lambda\in\C$, $\textup{Re}\lambda>0$. Then:
\begin{enumerate}[\hspace{0.5cm}\normalfont(a)]
    \item For every $z\in\D$ and all $t\ge0$
    $$\dfrac{(1-|\tau|)|\tau-z|}{|1-\bar{\tau}z|}e^{-\mathrm{Re}\lambda \exp(2d_\D(\tau,z))t} \leq \abs{\gamma_z(t)-\tau} \leq \dfrac{(1+|\tau|)|\tau-z|}{|1-\bar{\tau}z|}e^{-\mathrm{Re}\lambda \exp(-2d_\D(\tau,z))t}.$$
    \item Suppose that $\Delta$ is a petal of $(\phi_t)$ and that all backward orbits contained in $\Delta$ converge to $\sigma\in\partial\D$. There exists $\nu\in(-\infty,0)$ depending on $\sigma$ such that for all $\epsilon>0$, $t\ge0$ and $z\in\Delta$
        $$c_1e^{\nu t} \leq |\widetilde{\gamma}_z(t)-\sigma| \leq c_2e^{(\nu+\epsilon)t},$$
    where $c_1=c_1(z)$ and $c_2=c_2(z,\epsilon)$ can be explicitly stated.
\end{enumerate}
\end{theorem}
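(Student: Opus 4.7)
The plan for part (a) is to reduce to the case $\tau=0$ by composing with the disc involution $T_\tau(z)=(\tau-z)/(1-\bar\tau z)$. Setting $w=T_\tau(z)$ and $\psi_t=T_\tau\circ\phi_t\circ T_\tau$ produces a conjugate semigroup with $\psi_t(0)=0$ and $\psi_t'(0)=e^{-\lambda t}$. By the Berkson-Porta representation, the infinitesimal generator of $(\psi_t)$ has the form $H(u)=-u\,q(u)$, where $q\colon\D\to\C$ is holomorphic with $\textup{Re}\,q\ge 0$ and $q(0)=\lambda$. Differentiating $|\psi_t(w)|^2$ along the flow yields the ODE
$$\frac{d}{dt}|\psi_t(w)|=-|\psi_t(w)|\,\textup{Re}\,q(\psi_t(w)).$$
Harnack's inequality for the positive harmonic function $\textup{Re}\,q$ (with value $\textup{Re}\,\lambda$ at $0$) bounds $\textup{Re}\,q(u)$ two-sidedly by $\textup{Re}\,\lambda\cdot(1\mp|u|)/(1\pm|u|)$. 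Combining this with the Schwarz-Pick contraction $|\psi_t(w)|\le|w|$ allows the upper/lower envelopes to be replaced by the constants $(1\mp|w|)/(1\pm|w|)=e^{\mp 2d_\D(\tau,z)}$, so Grönwall's lemma delivers
$$|w|\,e^{-\textup{Re}\,\lambda\,e^{2d_\D(\tau,z)}t}\le|\psi_t(w)|\le|w|\,e^{-\textup{Re}\,\lambda\,e^{-2d_\D(\tau,z)}t}.$$
Translating back through $|\phi_t(z)-\tau|=|\psi_t(w)|\cdot|1-\bar\tau\phi_t(z)|$ together with the elementary bounds $1-|\tau|\le|1-\bar\tau\phi_t(z)|\le 1+|\tau|$ recovers exactly the stated estimates.

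For part (b), I would exploit that since $(\phi_t)$ is elliptic, every petal $\Delta$ is hyperbolic and $(\phi_t|_\Delta)$ extends to a one-parameter group of automorphisms of $\Delta$, see \cite[Chapter 13]{BCDM}. The point $\sigma$ is a boundary repelling fixed point of $(\phi_t)$ and admits angular derivatives $\phi_t'(\sigma)=e^{-\nu t}$ for some $\nu<0$. The lower bound comes from Julia's lemma applied to $\phi_t$ at $\sigma$: the horocyclic inequality $|1-\bar\sigma\phi_t(w)|^2/(1-|\phi_t(w)|^2)\le e^{-\nu t}|1-\bar\sigma w|^2/(1-|w|^2)$, substituted with $w=\widetilde\gamma_z(t)$, gives a lower estimate on $|\widetilde\gamma_z(t)-\sigma|$ of the form $c_1 e^{\nu t}$, where $c_1=c_1(z)$ is computable from $|1-\bar\sigma z|^2/(1-|z|^2)$. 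For the upper bound, I would exploit the conformal model of $\Delta$: a Riemann map $\rho\colon\D\to\Delta$ conjugates $(\phi_t|_\Delta)$ into a hyperbolic Möbius flow on $\D$ whose backward dynamics decay precisely like $e^{\nu t}$. Pulling this back via $\rho$ introduces a boundary distortion at the prime end corresponding to $\sigma$ which, controlled through Carathéodory-Julia Hölder-type estimates at this fixed point, is absorbed into an exponential loss and yields $|\widetilde\gamma_z(t)-\sigma|\le c_2 e^{(\nu+\epsilon)t}$.

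The principal obstacle lies in part (b), namely in making the constants $c_1,c_2$ explicit. The lower bound constant $c_1(z)$ is transparent from the horocyclic inequality. The harder constant is $c_2(z,\epsilon)$, which requires a quantitative control of the conformal distortion of $\rho$ near the boundary point representing $\sigma$, the tracking of which dictates the dependence on $z$ and on $\epsilon$. The $\epsilon$-loss is structural: only the \emph{infinitesimal} contraction rate $e^{\nu t}$ of $\phi_t^{-1}$ at $\sigma$ is available exactly, so any finite-$z$ estimate must pay a subexponential toll which is conveniently incorporated into the exponent as $\epsilon>0$ rather than as a $t$-dependent multiplicative correction.
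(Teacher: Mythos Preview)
Your argument for part~(a) is correct and essentially identical to the paper's: both reduce to $\tau=0$ via the involution $\Psi(z)=(\tau-z)/(1-\bar\tau z)$, use the Berkson--Porta representation $G(u)=-u\,p(u)$ of the generator, bound $\textup{Re}\,p$ via the Harnack/distortion inequality for self-maps of the right half-plane, and integrate the resulting differential inequality for $|\psi_t(w)|$.

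Part~(b), however, diverges from the paper both in method and in completeness. The paper does not work with the Riemann map of the petal or with Julia's lemma at $\sigma$; instead it uses Gumenyuk's \emph{lifting technique}: the relation $e^{-\psi_t(w)}=\phi_t(e^{-w})$ produces a non-elliptic semigroup $(\psi_t)$ on $\C_0$, which after a Cayley transform becomes a non-elliptic semigroup $(\hat\phi_t)$ on $\D$ with repelling fixed point $-1$ and the same repelling spectral value $\nu$. The backward orbit $\widetilde\gamma_z$ lifts to a backward orbit of $(\hat\phi_t)$ contained in a hyperbolic petal, and Theorem~\ref{thm:backward}(c) (already proved for the non-elliptic case) applies directly. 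The comparison between $|\widetilde\gamma_z(t)-\sigma|$ and the lifted quantity is handled by the elementary two-sided bound $(3-e)|u|\le|e^{u}-1|\le(e-1)|u|$ for $|u|<1$. This route yields explicit constants $c_1,c_2$ inherited from Theorem~\ref{thm:backward}(c) with no new boundary-distortion analysis required.

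Your Julia-lemma argument for the lower bound has a genuine gap. Substituting $w=\widetilde\gamma_z(t)$ into the horocyclic inequality yields
\[
|\widetilde\gamma_z(t)-\sigma|^2 \;\ge\; e^{\nu t}\,\frac{|z-\sigma|^2}{1-|z|^2}\,\bigl(1-|\widetilde\gamma_z(t)|^2\bigr),
\]
but the factor $1-|\widetilde\gamma_z(t)|^2$ tends to $0$ as $t\to\infty$, so this is $o(e^{\nu t})$, not $\gtrsim e^{\nu t}$. To salvage the argument you must supply a lower bound of the form $1-|\widetilde\gamma_z(t)|^2\ge c\,|\widetilde\gamma_z(t)-\sigma|$, i.e.\ non-tangential convergence of the regular backward orbit to $\sigma$. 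This is true (it follows from the petal structure, cf.\ \cite[Chapter~13]{BCDM}), but it is a non-trivial additional input that you do not invoke; without it your constant $c_1$ is not ``computable from $|1-\bar\sigma z|^2/(1-|z|^2)$'' alone. Your upper-bound sketch via the Riemann map $\rho:\D\to\Delta$ is a legitimate alternative strategy (close in spirit to the conformality discussion in \cite{GKR}), but remains at the level of an outline: the ``Carath\'eodory--Julia H\"older-type estimates'' that absorb the boundary distortion into the $\epsilon$-loss are precisely where the work lies, and you have not indicated how to make $c_2(z,\epsilon)$ explicit from them. The paper's lifting approach sidesteps both difficulties by reducing to an already-quantified non-elliptic estimate.
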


The constants $c_1, c_2$ are found through the constants in Theorem \ref{thm:backward}(c). In the case when $\tau=0$, the exact same rate of convergence to the Denjoy-Wolff point can be found in \cite{Shoikhet}. For $\tau\in\D\setminus\{0\}$, the rate can be extracted through the use of an automorphism of $\D$. Despite the simplicity of the proof, we include the result for the sake of completeness. Furthermore, we will discuss the sharpness of the above rates in subsequent sections. Of course, elliptic semigroups can have non-regular backward orbits too. Similar statements to those in Theorem \ref{thm:non-regular}(c),(d) hold for the elliptic case as well (note that by \cite[Remark 13.4.5]{BCDM} elliptic semigroups cannot have parabolic petals). The proof for this may be derived identically to Theorem \ref{thm:elliptic}(b). For the sake of not being superfluous, we omit this case.

The structure of the article is as follows: First of all, in Section 3, we are going to mention some more general information with respect to  semigroups and the tools we will use to prove our results. Then, in Section 4, we discuss forward orbits of non-elliptic semigroups and their rates of convergence by proving Theorems \ref{thm:forward} and \ref{thm:lower}. Using the techniques developed in that section, we analyze the rates of convergence of backward orbits by proving Theorems \ref{thm:backward} and \ref{thm:non-regular} in Section 5. Finally, in Section 6 we work with elliptic semigroups and their respective orbits proving Theorem \ref{thm:elliptic}. In each section, we will lay out remarks and examples demonstrating the corresponding sharpness of the rates.

\section{Auxiliary Information}
\subsection{Semigroups in the Unit Disk}
We start the section with some additional information concerning the theory of semigroups. Let $(\phi_t)$ be a semigroup in $\D$. Any $z\in\D$ satisfying $\phi_t(z)=z$, for all $t\ge0$, is called a \textit{fixed point} of the semigroup. It is easy to check that only elliptic semigroups can have such a fixed point. In particular, an elliptic $(\phi_t)$ always has exactly one fixed point in $\D$, its Denjoy-Wolff point. On top of that, we are interested in fixed points lying on the unit circle. A $\zeta\in\partial\D$ is called a \textit{boundary fixed point} of $(\phi_t)$ if
\begin{equation*}
    \angle\lim\limits_{z\to\zeta}\phi_t(z)=\zeta, \quad\text{for all }t\ge0.
\end{equation*}
Both elliptic and non-elliptic semigroups can have boundary fixed points. Of course, one of the boundary fixed points of a non-elliptic semigroup is its Denjoy-Wolff point. For any other boundary fixed point $\sigma\in\partial\D$ of a semigroup $(\phi_t)$, we know that (see \cite[Proposition 12.1.6]{BCDM}) either there exists some $\nu\in(-\infty,0)$ such that $\angle\lim_{z\to\sigma}\phi'_t(z)=e^{-\nu t}$, for all $t\ge0$, or $\angle\lim_{z\to\sigma}\phi'_t(z)=\infty$, for all $t\ge0$. In case the former holds, we say that $\sigma$ is a \textit{repelling} fixed point. When the latter holds, $\sigma$ is said to be \textit{super-repelling}. For a repelling fixed point, the number $\nu$ provided above is called its \textit{repelling spectral value}. For super-repelling fixed points, we may say that their corresponding spectral value is $-\infty$. 

We move on from fixed points to a tool we already mentioned in Section 2, the Koenigs function. When $(\phi_t)$ is non-elliptic with Koenigs function $h$, we already wrote that $h(\phi_t(z))=h(z)+t$, for all $z\in\D$ and $t\ge0$. In particular, the Koenigs domain $\Omega:=h(\D)$ is \textit{convex in the positive direction}, i.e. $\Omega+t\subseteq\Omega$, for all $t\ge0$. The importance of this fact lies in its invertibility. More specifically, if $\Omega$ is any convex in the positive direction simply connected domain and $h:\D\to\Omega$ is a Riemann mapping, then the relation $\phi_t(z):=h^{-1}(h(z)+t)$ gives birth to a non-elliptic semigroup. For more information on the Koenigs function and its mapping properties, we refer to \cite[Chapter 9]{BCDM}. 

\subsection{Backward Orbits}
We move on to the backward dynamics of continuous semigroups in $\D$. In Section 2, we have already provided the definition of backward orbits and their regularity. In this subsection, we mostly follow \cite[Chapter 13]{BCDM}.

In the present article, we will only work with backward orbits emanating from points $z\in\D$ satisfying $T_z=+\infty$. So from now on, whenever we talk about the backward orbit $\widetilde{\gamma}_z$, we will always suppose a priori that $T_z=+\infty$, without the need of explicitly mentioning it.

We start with regular backward orbits. When we have an elliptic semigroup $(\phi_t)$ with Denjoy-Wolff point $\tau\in\D$, then the backward orbit $\widetilde{\gamma}_\tau$ satisfies $\widetilde{\gamma}_\tau(t)=\tau$, for all $t\ge0$. For the rest of the article, we disregard this trivial instance. In any other case, by \cite[Proposition 13.1.7]{BCDM}, we know that every backward orbit converges either to a repelling fixed point of the semigroup or to the Denjoy-Wolff point (this can only happen for non-elliptic semigroups).

Given a semigroup $(\phi_t)$ in $\D$, the set $W:=\cap_{t\ge0}\phi_t(\D)$ is called the \textit{backward invariant set} of $(\phi_t)$. Any non-empty connected component of the interior of $W$ is called a \textit{petal} of $(\phi_t)$. It is easy to see that in the setting of non-elliptic groups, there exists a unique petal and it coincides with the whole unit disk. Given a petal $\Delta$, the backward orbits of all points contained in $\Delta$ converge to the same fixed point of $(\phi_t)$. Frequently, we will say that $\Delta$ corresponds to this fixed point, or the inverse. If the backward orbits contained in $\Delta$ converge to a repelling fixed point, we say that the petal is \textit{hyperbolic}. If they converge back to the Denjoy-Wolff point, then $\Delta$ is said to be \textit{parabolic}.

Assume that $(\phi_t)$ is a non-elliptic semigroup with Denjoy-Wolff point $\tau\in\partial\D$, Koenigs function $h$ and Koenigs domain $\Omega$. Let $\Delta$ be a hyperbolic petal. Then $\Delta$ corresponds to a repelling fixed point $\sigma\in\partial\D$ with repelling spectral value $\nu\in(-\infty,0)$. Then, we know (see \cite[Section 13.5]{BCDM}) that $\Delta$ is mapped through $h$ onto a maximal in $\Omega$ horizontal strip whose width is equal to $-\pi/\nu$. By maximal in $\Omega$, we mean that there exists no horizontal strip $S$ such that $h(\Delta)\subset S\subseteq \Omega$. On the contrary, if $\Delta$ is parabolic, then $h(\Delta)$ is a maximal in $\Omega$ horizontal half-plane. In any case, it is easily verified by (\ref{eq:backward definition}) and (\ref{eq:koenigs}) that each backward orbit $\widetilde{\gamma}_z$ is mapped through $h$ onto the horizontal half-line $\{h(z)-t:t\ge0\}$ which stretches to infinity in the negative direction.

\subsection{Infinitesimal Generators}
Let $(\phi_t)$ be a semigroup in $\D$. Then (see \cite[Theorem 10.1.4]{BCDM}) there exists a unique holomorphic mapping $G:\D\to\C$ such that 
    \begin{equation}\label{infinitesimal generator}
        \frac{\partial\phi_t(z)}{\partial t}=G(\phi_t(z)), \quad\text{for all }z\in\D \text{ and }t\ge0.
    \end{equation}
This unique mapping is called the \textit{infinitesimal generator} of the semigroup. Thus, $(\phi_t)$ can be thought of as the \textit{flow} of the \textit{vector field} $G$. An important result concerning the form of an infinitesimal generator is the so-called \textit{Berkson-Porta's Formula}:
\begin{theorem}
{\normalfont\cite[Theorem 10.1.10]{BCDM}}
    Let $G:\D\to\C$ be a non-constant holomorphic function. Then $G$ is the infinitesimal generator of a semigroup in $\D$ if and only if there exist a point $\tau\in\overline{\D}$ and a non-vanishing holomorphic function $p:\D\to\{w\in\C:\textup{Re}w\ge0\}$ such that
    \begin{equation}\label{berkson-porta}
        G(z)=(z-\tau)(\overline{\tau}z-1)p(z).
    \end{equation}
\end{theorem}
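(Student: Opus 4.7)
The plan is to establish both directions by identifying $\tau$ with the Denjoy-Wolff point of the semigroup generated by $G$, and then using the Schwarz-Pick lemma on $\D$ in the elliptic case ($\tau\in\D$) together with Julia's lemma and the invariance of horocycles in the non-elliptic case ($\tau\in\partial\D$). The real-part condition on $p$ then encodes, infinitesimally, the fact that $(\phi_t)$ is an inward contraction toward $\tau$.

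For the necessity direction I would first invoke the Denjoy-Wolff theorem to identify $\tau$ as the unique Denjoy-Wolff point of $(\phi_t)$. In the elliptic case I would conjugate by a M\"obius automorphism of $\D$ sending $\tau$ to $0$; the conjugated semigroup fixes the origin, so by the Schwarz lemma the map $z\mapsto \phi_t(z)/z$ takes values in $\overline{\D}$ for every $t\ge 0$ and equals $1$ at $t=0$. Differentiating at $t=0$ via (\ref{infinitesimal generator}) yields $\textup{Re}(G(z)/z)\le 0$ for all $z\ne 0$, and undoing the automorphism gives the Berkson-Porta factorisation with $\textup{Re}\,p\ge 0$. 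In the non-elliptic case Julia's lemma states that the horodisks $\{z\in\D:|\tau-z|^2/(1-|z|^2)<R\}$ are invariant under every $\phi_t$. Computing the time derivative of the horocycle functional along a trajectory at $t=0$ using (\ref{infinitesimal generator}) and simplifying algebraically, one obtains exactly $\textup{Re}\bigl(G(z)/((z-\tau)(\overline{\tau}z-1))\bigr)\ge 0$, which is the required sign on $p(z)$.

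For the sufficiency direction, given $G$ in the stated form, I would use the local existence and uniqueness of solutions of the ODE $\dot w=G(w)$ and prove that any maximal solution starting at $z\in\D$ stays in $\D$ for all $t\ge 0$, thereby defining a semiflow. The candidate Lyapunov function is the pseudo-hyperbolic distance from $\tau$, namely $|(z-\tau)/(1-\overline{\tau}z)|^2$ in the elliptic case, and the Julia functional $|1-\overline{\tau}z|^2/(1-|z|^2)$ in the non-elliptic case. A direct computation using the factored form of $G$ shows in each case that the time derivative of this quantity has a sign controlled by $-\textup{Re}\,p(z)\le 0$, so the trajectory cannot escape $\D$ in finite time. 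The semigroup law $\phi_{t+s}=\phi_t\circ\phi_s$ and the right-continuity at $t=0$ required by conditions (i)-(iii) then follow from uniqueness of ODE solutions and continuous dependence on initial data.

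The hard part will be the non-elliptic half of necessity: since $\tau$ lies on $\partial\D$, the classical Schwarz lemma does not apply directly, and one must carefully pass from the Julia-lemma invariance of an entire family of horodisks to a pointwise sign condition on $p$; the algebraic step relating the inward normal derivative of the horocycle functional to $\textup{Re}\,p$ is where the specific factor $(z-\tau)(\overline{\tau}z-1)$ is forced upon us. The sufficiency direction is more mechanical once the correct Lyapunov function has been identified, although in the parabolic regime one must still rule out tangential escape through boundary points other than $\tau$.
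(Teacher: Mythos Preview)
The paper does not prove this theorem at all: it is quoted verbatim as \cite[Theorem 10.1.10]{BCDM} and used as background, with no accompanying argument. There is therefore nothing in the paper to compare your proposal against.

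That said, as a stand-alone sketch your outline is essentially the standard route to Berkson--Porta. The reduction of the elliptic case to $\tau=0$ followed by the Schwarz-lemma bound on $\phi_t(z)/z$ is correct, and differentiating at $t=0$ does yield $\textup{Re}(G(z)/(-z))\ge 0$. In the non-elliptic case the use of Julia's lemma is also the right idea; the computation you describe---differentiating the horocycle functional $|\tau-z|^2/(1-|z|^2)$ along the flow---does produce exactly the factor $(z-\tau)(\overline{\tau}z-1)$ and the sign condition on $p$. For sufficiency, your Lyapunov-function argument is the usual one and works; the only caveat is that ``tangential escape'' is automatically excluded once you know the relevant functional is non-increasing, so that last worry is not really a gap. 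If you want a reference against which to check the details, the proof in \cite[Theorem 10.1.10]{BCDM} follows this same scheme.
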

The point $\tau$ in the formula is actually the Denjoy-Wolff point of the semigroup generated by $G$. In addition, in case $G$ generates an elliptic semigroup whose Denjoy-Wolff point is $0$, then $p(0)$ is equal to the spectral value of the semigroup. Furthermore, if $G$ generates a semigroup which is not a group, then $p$ maps the unit disc into the right half-plane $\C_0$.

\subsection{Hyperbolic Geometry}
During the course of the proofs, we are going to need several conformally invariant quantities. In the upcoming subsections, we will provide all the necessary background with regard to these quantities.

We start with certain notions from hyperbolic geometry. For a comprehensive account of hyperbolic quantities, we refer to \cite[Chapter 5]{BCDM} and references therein. First of all, the \textit{hyperbolic metric} in the unit disk $\D$ is given by the relation
\begin{equation}\label{eq:hyp metric}
\lambda_{\D}(z)|dz|=\frac{|dz|}{1-|z|^2}, \quad z\in\D,
\end{equation}
where the function $\lambda_\D$ is called the \textit{hyperbolic density} of $\D$. Through the hyperbolic metric, we are able to define the \textit{hyperbolic distance} $d_\D(z_1,z_2)$ in the unit disk between two points $z_1,z_2\in\D$. Indeed,

$$d_\D(z_1,z_2)=\inf\limits_{\gamma}\int\limits_{\gamma}\lambda_\D(\zeta)|d\zeta|,$$
where the infimum is taken over all piecewise smooth curves $\gamma:[0,1]\to\D$ satisfying $\gamma(0)=z_1$ and $\gamma(1)=z_2$. A useful equality stemming from examining the above infimum is
\begin{equation}\label{hyperbolic distance in the unit disk}
    d_\D(0,z)=\frac{1}{2}\log\frac{1+|z|}{1-|z|}, \quad z\in\D.
\end{equation}

Through the use of conformal mappings, we may transcend all the aforementioned notions into the setting of an arbitrary simply connected domain $\Omega\subsetneq\C$. Let $f:\Omega\to\D$ be a Riemann mapping of $\Omega$. Then, the hyperbolic density and hyperbolic distance with respect to $\Omega$ may be defined via
$$\lambda_\Omega(w)=\lambda_\D(f(w))|f'(w)|, \;\;\; d_\Omega(w_1,w_2)=d_\D(f(w_1),f(w_2)), \quad w_1,w_2\in\Omega.$$
It can be proved that the definitions above are independent of the initial choice of the Riemann mapping $f$.

A direct corollary of the definition of $d_\Omega$ is the property of conformal invariance that the hyperbolic distance possesses. In addition, the hyperbolic distance satisfies the triangle inequality. Another property is its domain monotonicity. To be specific, let $\Omega_1\subset\Omega_2\subsetneq\C$ be two simply connected domains. Then, for all $w_1,w_2\in\Omega_1$, it is true that
$$d_{\Omega_1}(w_1,w_2)\ge d_{\Omega_2}(w_1,w_2).$$

To end the subsection, we will state a useful inequality correlating hyperbolic distance and Euclidean distance. Given a simply connected domain $\Omega\subsetneq\C$ and $w\in\Omega$, we use the notation $\delta_\Omega(w)=\textup{dist}(w,\partial\Omega)$.

\begin{theorem}
{\normalfont\cite[Theorem 5.3.1]{BCDM}}
\label{DistanceLemma}
Let $\Omega\subsetneq\C$ be a simply connected domain. Then for every $w_1,w_2\in\Omega$, 
\begin{equation}\label{distance lemma equation}
\frac{1}{4}\log\left(1+\frac{|w_1-w_2|}{\min\{\delta_{\Omega}(w_1),\delta_{\Omega}(w_2)\}}\right)\le d_\Omega(w_1,w_2)\le\int\limits_{\Gamma}\frac{|dw|}{\delta_\Omega(w)},
\end{equation}
where $\Gamma$ is any piecewise $C^1$-smooth curve in $\Omega$ joining $w_1$ to $w_2$.
\end{theorem}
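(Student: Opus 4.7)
The plan is to prove the two inequalities of (\ref{distance lemma equation}) separately using complementary pointwise comparisons between the hyperbolic density $\lambda_\Omega$ and the inverse boundary distance $1/\delta_\Omega$. The upper bound rests on the Schwarz--Pick lemma (the easy direction) while the lower bound requires Koebe's $1/4$-theorem (the harder, and genuinely two-sided, direction).

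For the upper bound, I would note that $D(w,\delta_\Omega(w))\subseteq\Omega$ for every $w\in\Omega$, so the inclusion is a holomorphic self-map of disks which is contractive for the hyperbolic metric. Evaluating the Schwarz--Pick contraction at $w$ gives
$$\lambda_\Omega(w)\le\lambda_{D(w,\delta_\Omega(w))}(w)=\frac{1}{\delta_\Omega(w)},\quad w\in\Omega.$$
Integrating this pointwise bound along any piecewise $C^1$ curve $\Gamma$ in $\Omega$ from $w_1$ to $w_2$ immediately yields
$$d_\Omega(w_1,w_2)\le\int_\Gamma\lambda_\Omega(w)\,|dw|\le\int_\Gamma\frac{|dw|}{\delta_\Omega(w)},$$
which is exactly the right-hand inequality.

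For the lower bound, apply Koebe's $1/4$-theorem to a Riemann map $f\colon\D\to\Omega$ with $f(0)=w$: the image $\Omega$ must contain $D(w,|f'(0)|/4)$, so $|f'(0)|\le 4\delta_\Omega(w)$, and therefore
$$\lambda_\Omega(w)=\frac{1}{|f'(0)|}\ge\frac{1}{4\delta_\Omega(w)}.$$
Consequently for any piecewise $C^1$ curve $\gamma$ from $w_1$ to $w_2$,
$$\int_\gamma\lambda_\Omega(w)\,|dw|\ge\frac{1}{4}\int_\gamma\frac{|dw|}{\delta_\Omega(w)}.$$
Set $d_0:=\min\{\delta_\Omega(w_1),\delta_\Omega(w_2)\}$ and, without loss of generality, assume the minimum is attained at $w_1$. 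Parametrize $\gamma$ by arc length $s\in[0,L]$ with $\gamma(0)=w_1$; since $w\mapsto\delta_\Omega(w)$ is $1$-Lipschitz, $\delta_\Omega(\gamma(s))\le d_0+s$ for all $s$. Hence
$$\int_\gamma\frac{|dw|}{\delta_\Omega(w)}\ge\int_0^L\frac{ds}{d_0+s}=\log\frac{d_0+L}{d_0}\ge\log\left(1+\frac{|w_1-w_2|}{d_0}\right),$$
using $L\ge|w_1-w_2|$ and the monotonicity of $L\mapsto\log(1+L/d_0)$. Taking the infimum over all admissible $\gamma$ produces the left-hand inequality.

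The only nontrivial input is the Koebe $1/4$-theorem estimate $\lambda_\Omega(w)\ge 1/(4\delta_\Omega(w))$; without it, one cannot reverse the pointwise density comparison and so cannot obtain any universal lower bound of this Euclidean flavor. The remainder of the argument is elementary: the Schwarz--Pick upper bound is essentially immediate, and the passage from the density estimates to the stated inequalities is a one-variable calculation on the connecting curve together with the Lipschitz property of the boundary-distance function.
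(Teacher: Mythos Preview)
Your proof is correct and follows the standard argument for this classical estimate. Note, however, that the paper does not actually prove this theorem: it is stated as a quoted result from \cite[Theorem 5.3.1]{BCDM} and used as a tool throughout, so there is no proof in the paper to compare against. Your approach---Schwarz--Pick monotonicity for the upper bound, Koebe's $1/4$-theorem for the pointwise density lower bound $\lambda_\Omega\ge 1/(4\delta_\Omega)$, followed by the $1$-Lipschitz estimate on $\delta_\Omega$ along an arc-length parametrization---is exactly the standard route, and every step is sound.
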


\subsection{Harmonic Measure}
We move on to a second conformal invariant which is going to play a pivotal role towards the goal of obtaining the desired rates of convergence. As before, let $\Omega\subsetneq\C$ be a simply connected domain and let $E$ be a Borel subset of the boundary $\partial\Omega$. The \textit{harmonic measure} of $E$ with respect to $\Omega$ is the solution of the generalized Dirichlet problem for the Laplacian in $\Omega$ with boundary values equal to $1$ on $E$ and to $0$ on $\partial\Omega\setminus E$. Given $z\in\Omega$, we will use the notation $\omega(z,E,\Omega)$ for the corresponding harmonic measure. On top of that, because of the initial Dirichlet problem, for $\zeta\in\partial\Omega$, we set $\omega(\zeta,E,\Omega)=1$ whenever $\zeta\in E$, and $\omega(\zeta,E,\Omega)=0$ whenever $\zeta\in\partial\Omega\setminus E$. Note that this extension to the boundary is not continuous in general. For a detailed presentation of the deep theory of harmonic measure, we refer the interested reader to \cite[Chapter 7]{BCDM}, \cite{GM} and \cite{Ransford}.

As we already mentioned, harmonic measure is conformally invariant. Moreover, by the definition itself, $\omega(\cdot,E,\Omega)$ is a harmonic function on $\Omega$ for every choice of $E\subset\partial\Omega$ Borel. Furthermore, $\omega(z,\cdot,\Omega)$ acts as a Borel probability measure on $\partial\Omega$, for all $z\in\Omega$. As such, the harmonic measure is always bounded below by $0$ and above by $1$. Another important piece of information is that the harmonic measure satisfies a property of domain monotonicity, as well. In particular, let $\Omega_1\subset\Omega_2\subsetneq\C$ be two simply connected domains and let $E\subset\partial\Omega_1\cap\partial\Omega_2$ be Borel. Then
$$\omega(z,E,\Omega_1)\le\omega(z,E,\Omega_2), \quad \text{for all }z\in\Omega_1.$$

Next, we proceed to some more elaborate results and estimations concerning harmonic measure. First of all, the above monotonicity property can be modified into a precise equality by means of the \textit{Strong Markov Property}. Indeed, let $\Omega_1,\Omega_2$ and $E$ be as above. Then, for all $z\in\Omega_1$, by \cite[p. 88]{Port-Stone}, 
\begin{equation}\label{markov property}
\omega(z,E,\Omega_2)=\omega(z,E,\Omega_1)+\int\limits_{\partial\Omega_1\setminus\partial\Omega_2}\omega(\zeta,E,\Omega_2)\cdot\omega(z,d\zeta,\Omega_1).
\end{equation}

We are also going to use two classical estimates about the harmonic measure in the unit disk. The first one is the so-called \textit{Beurling-Nevanlinna Projection Theorem}.
\begin{theorem}
{\normalfont\cite[Theorem 9.2]{GM}}
\label{BeurlingProjection}
    Let $E\subset\overline{\D}\setminus\{0\}$ and set $E^{*}=\{|z|:z\in E\}$. Suppose that $\D\setminus E$ remains a domain. Then
    \begin{equation}\label{eq:projection}
    \omega(z,E,\D\setminus E)\ge\omega(-|z|,E^{*},\D\setminus E^{*}), \quad\textup{for all }z\in\D\setminus E.
    \end{equation}
\end{theorem}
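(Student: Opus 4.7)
My plan is to combine circular symmetrization with the maximum principle, following the classical line of Beurling and Baernstein. After reducing by outer approximation to the case where $E$ is compact (the limiting statement following from standard monotone-convergence arguments for harmonic measure), I would define the circular symmetrization $E^{\sharp}$ of $E$ with respect to the positive real axis as follows: for each $r\in(0,1]$, let $E^{\sharp}\cap\{|w|=r\}$ be the closed arc of the circle $\{|w|=r\}$ centered at the point $r$ whose one-dimensional Lebesgue measure equals that of $E\cap\{|w|=r\}$, taking the empty set or the full circle in the two degenerate cases. By construction, whenever $r\in E^{*}$ the point $r$ itself lies in $E^{\sharp}$, so $E^{*}\subseteq E^{\sharp}$.

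The central step, which I also expect to be the main obstacle, is the classical circular symmetrization inequality for harmonic measure,
\begin{equation*}
\omega(z,E,\D\setminus E)\ge\omega(-|z|,E^{\sharp},\D\setminus E^{\sharp}),\qquad z\in\D\setminus E.
\end{equation*}
Heuristically, symmetrization concentrates $E$ as much as possible on the positive real axis, thereby minimizing the harmonic measure at the antipodal radial point $-|z|$. I would prove it via Baernstein's star-function method, forming a suitable rearrangement of the subharmonic function $1-\omega(\cdot,E,\D\setminus E)$ and verifying its subharmonicity on the upper half-disk, from which the pointwise inequality follows by a standard boundary comparison. Alternative routes go through Solynin's continuous circular symmetrization or a probabilistic coupling of the radial projections of Brownian motion.

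To finish, I would compare $E^{\sharp}$ with $E^{*}$ by a direct maximum-principle argument. Set $u(w):=\omega(w,E^{\sharp},\D\setminus E^{\sharp})$ and $v(w):=\omega(w,E^{*},\D\setminus E^{*})$. Since $E^{*}\subseteq E^{\sharp}$, the domain $\D\setminus E^{\sharp}$ is contained in $\D\setminus E^{*}$, so $v$ remains harmonic on $\D\setminus E^{\sharp}$. On the boundary $\partial(\D\setminus E^{\sharp})$ one has $v\le 1=u$ on the inner boundary $E^{\sharp}$ and $v=0=u$ on the outer boundary $\partial\D\setminus E^{\sharp}$, so the maximum principle yields $v\le u$ throughout $\D\setminus E^{\sharp}$. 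Evaluating at $w=-|z|$, which belongs to $\D\setminus E^{\sharp}$ outside of degenerate configurations eliminated by the approximation, and combining with the symmetrization inequality delivers the claimed bound.
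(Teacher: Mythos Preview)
The paper does not supply its own proof of this statement: it is the classical Beurling--Nevanlinna projection theorem, quoted verbatim with a citation to \cite[Theorem 9.2]{GM} and used as a black box in the proof of Theorem~\ref{thm:forward}(a). So there is nothing in the paper to compare against.

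As for your outline itself, it follows the standard route (circular symmetrization plus a maximum-principle comparison between $E^{\sharp}$ and the radial projection $E^{*}$), and the overall architecture is correct. One point to be careful about in a full write-up: in the final comparison you assert $v\le 1=u$ on the inner boundary $E^{\sharp}$, but strictly speaking $u$ is only defined on $\D\setminus E^{\sharp}$, so you need the boundary values in the sense of $\limsup$/$\liminf$ and must handle the possibility that $\D\setminus E^{\sharp}$ is not a domain (it can split into several components after symmetrization); the standard fix is to work componentwise or to appeal directly to the domain-monotonicity of harmonic measure, since $E^{*}\subseteq E^{\sharp}$ gives $\omega(\cdot,E^{*},\D\setminus E^{*})\le\omega(\cdot,E^{\sharp},\D\setminus E^{\sharp})$ on each component. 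Also, the claim that $-|z|\in\D\setminus E^{\sharp}$ need not hold for every $z\in\D\setminus E$ (if $E$ meets the full circle $\{|w|=|z|\}$), so the approximation argument you mention at the start must be invoked here as well, not just for compactness.
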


In connection with the above result, it is useful to know (cf. \cite[Lemma 4.5.8]{Ransford}) that for all $r \in (0,1)$, 
\begin{equation}
\label{eq:slitted-disk}
\omega(0,[r,1),\D \setminus [r,1)) = \frac{2}{\pi}\arcsin\frac{1-r}{1+r}.
\end{equation}

The second one provides a connection between the harmonic measure of a compact connected set and its diameter. This very result will allow us to express the rates of the orbits in terms of harmonic measure.

\begin{lemma}
{\normalfont\cite[Theorem 2]{FRW}\cite{Solynin}}
\label{Solynin}
    Let $E\subset\overline{\D}\setminus\{0\}$ be a compact connected set and write $d:=\textup{diam}E$. Denote by $D$ the connected component of $\D\setminus E$ that contains $0$. Let $E_d$ be an arc on $\partial\D$ satisfying $\textup{diam}[E_d]=d$ (in the extremal case when $d=2$, we take $E_d$ to be a half-circle). Then
    \begin{equation}\label{eq:solynin}
    \omega(0,E,D)\ge\omega(0,E_d,\D).
    \end{equation}
\end{lemma}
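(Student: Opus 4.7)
The plan is to treat the stated inequality as an extremal problem: among all compact connected sets $E \subset \overline{\D}\setminus\{0\}$ of prescribed diameter $d$, the harmonic measure $\omega(0,E,D)$ at the origin is minimized when $E$ is a boundary arc of chord length $d$. The argument proceeds in three stages: (i) existence of a minimizer via compactness, (ii) localization of the minimizer on $\partial\D$ via symmetrization, and (iii) explicit computation in the extremal case.

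First, I would fix $d \in (0,2]$ and consider the family $\mathcal{F}_d$ of all compact connected sets $E \subset \overline{\D}\setminus\{0\}$ with $\textup{diam}(E)=d$. This family is compact in the Hausdorff topology. The functional $E \mapsto \omega(0,E,D_E)$, where $D_E$ denotes the connected component of $\D\setminus E$ containing $0$, is lower semicontinuous on $\mathcal{F}_d$; this follows from Carath\'eodory kernel convergence of the simply connected domains $D_E$, together with standard stability results for harmonic measure under such convergence. Hence a minimizer $E_\star \in \mathcal{F}_d$ exists.

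Second, to identify $E_\star$ I would apply a symmetrization argument. After a rotation, fix two points $a,b \in E$ realizing the diameter, so that $|a-b|=d$, and arrange the real axis to be their perpendicular bisector. I would then perform circular symmetrization (or, equivalently, iterated polarization in the sense of Dubinin) of $E$ with respect to the negative real axis. The resulting set $E^\sharp$ is symmetric about $\R$, remains compact and connected, has diameter at most $d$, and satisfies $\omega(0,E^\sharp,D_{E^\sharp}) \leq \omega(0,E,D_E)$ by the classical symmetrization inequality for harmonic measure. A further continuous outward radial push preserves diameter and connectedness while further decreasing $\omega(0,\cdot,\cdot)$; pushing all the way to $\partial\D$ forces the minimizer to lie entirely on the unit circle.

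Third, a compact connected subset of $\partial\D$ is a closed arc, and its diameter $d$ determines it up to rotation; hence $E_\star$ coincides with $E_d$ (in the borderline case $d=2$ one adopts the convention that $E_d$ is a closed half-circle). The value $\omega(0,E_d,\D)$ is then computed directly via the Poisson formula and equals the normalized angular length of $E_d$, completing the argument. The main obstacle is the careful justification of the symmetrization step: one must guarantee simultaneously that the diameter is non-increasing, that connectedness is preserved, and that $\omega(0,\cdot,\cdot)$ does not increase. Rigorous implementations typically rely on Baernstein's $*$-function or on Dubinin's continuous polarization, and the borderline case $d=2$ also requires separate attention since $E_d$ then degenerates to a semicircle.
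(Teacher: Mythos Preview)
The paper does not prove this lemma; it is quoted as a known result with citations to FitzGerald--Rodin--Warschawski and Solynin, and is used as a black box in the proofs of Theorems \ref{thm:forward} and \ref{thm:backward}. There is therefore no ``paper's own proof'' to compare against.

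Your outline is broadly in the spirit of the original sources, which do rely on symmetrization and extremal-problem techniques. A few points are worth flagging. The lower semicontinuity of $E\mapsto\omega(0,E,D_E)$ under Hausdorff convergence is not automatic: one must rule out degenerations where the limit set fails to separate the plane or where the component containing $0$ jumps discontinuously; this is typically handled by restricting to continua that actually meet $\overline{\D}$ and invoking capacity arguments rather than Carath\'eodory kernel convergence directly. More seriously, your second stage combines two operations---circular symmetrization and an ``outward radial push''---each of which has a different effect on diameter and on harmonic measure, and it is not obvious that either one alone, or the two in sequence, simultaneously (i) preserves connectedness, (ii) does not increase diameter, and (iii) does not increase $\omega(0,\cdot,\cdot)$. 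Circular symmetrization about a ray through $0$ can \emph{increase} the diameter of a set, and a radial push outward can \emph{increase} $\omega(0,E,D_E)$ rather than decrease it (moving $E$ toward $\partial\D$ makes it harder for Brownian motion from $0$ to hit $E$ before exiting $\D$, which lowers $\omega$, but the interaction with the diameter constraint is subtle). The original arguments address this via more refined tools (Baernstein's $\ast$-function in Solynin's approach, or a direct extremal-length comparison in FRW), and you correctly identify this as the main obstacle. As written, your step (ii) is a plausible heuristic rather than a proof.
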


\subsection{Extremal Length and Extremal Distance}
We proceed to a final conformally invariant quantity. Consider a domain $\Omega \subsetneq \C$ and a non-negative Borel measurable function $\rho$ defined on $\Omega$. Given a rectifiable curve $\gamma$ on $\Omega$, it is possible to define its \textit{length with respect to} $\rho$, that is,
$$L(\gamma,\rho) = \int_{\gamma}\rho(z)\abs{dz}.$$
If $\Gamma$ is a family of rectifiable curves, we define its \textit{length with respect to} $\rho$ as
$$L(\Gamma,\rho) = \inf_{\gamma \in \Gamma}L(\gamma,\rho).$$
Similarly, the \textit{area with respect to} $\rho$ of $\Omega$ is
$$A(\Omega,\rho) = \iint_{\Omega}\rho(z)^2dxdy, \quad z=x+iy.$$
Through these notions, one can define the \textit{extremal length} of $\Gamma$, that is,
$$\lambda_{\Omega}(\Gamma) = \sup_{\rho}\left\lbrace \dfrac{L(\Gamma,\rho)^2}{A(\Omega,\rho)} : A(\Omega,\rho) > 0\right\rbrace,$$
where the supremum is taken over all non-negative Borel measurable functions $\rho$ defined on $\Omega$.

One of the main applications of extremal length is that it allows the definition of a conformally invariant notion of distance, known as \textit{extremal distance}. If $E,F \subset \overline{\Omega}$, the extremal distance $\lambda_{\Omega}(E,F)$ between $E$ and $F$ inside $\Omega$ is the extremal length of the family of rectifiable curves that join $E$ and $F$ inside $\Omega$.

For a deeper introduction to this topic, we refer to \cite[Chapter 4]{Beliaev} and \cite[Chapter IV]{GM}. In particular, we will make use of the following:
\begin{example}
\label{ex:rectangle}
\cite[Subsection 4.2.1]{Beliaev}
\cite[Chapter IV, Example 1.1]{GM}
Consider the rectangle $\Omega = [0,a] \times [0,b]$ for some $a,b > 0$, and its vertical sides $E = \{0\} \times [0,b]$ and $F = \{a\} \times [0,b]$. The extremal distance between them is $\lambda_{\Omega}(E,F) = a/b$.
\end{example}
Apart from other simple geometric situations, the explicit calculation of an extremal distance might be difficult. However, there are some useful rules such as the following one, which is called the \textit{Serial Rule}:
\begin{lemma}
\label{lemma:serial-rule}
{\normalfont\cite[Proposition 4.6]{Beliaev}
\cite[Chapter IV, Section 3.2]{GM}}
Let $\Omega \subsetneq \C$ be a domain. Suppose that there exists a crosscut $C$ of $\Omega$ that divides this domain into two domains $\Omega_1$ and $\Omega_2$ such that $E \subset \overline{\Omega_1}$ and $F \subset \overline{\Omega_2}$. Then,
\begin{equation}\label{eq:serial}
\lambda_{\Omega}(E,F) \geq \lambda_{\Omega_1}(E,C) + \lambda_{\Omega_2}(C,F).
\end{equation}
\end{lemma}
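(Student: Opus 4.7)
The plan is to construct a competing metric on $\Omega$ by combining near-extremal metrics on the two subdomains $\Omega_1$ and $\Omega_2$, and then to optimize the resulting ratio of squared length to area. Write $\Gamma$, $\Gamma_1$, $\Gamma_2$ for the families of rectifiable curves joining $E$ to $F$ in $\Omega$, $E$ to $C$ in $\Omega_1$, and $C$ to $F$ in $\Omega_2$, respectively. Denote $A_i := \lambda_{\Omega_i}(E,C)$ for $i=1$ (and analogously for $i=2$).

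First, fix $\epsilon>0$. By the definition of extremal length as a supremum, choose non-negative Borel measurable weights $\rho_i$ on $\Omega_i$ such that
$$\frac{L(\Gamma_i,\rho_i)^{2}}{A(\Omega_i,\rho_i)}\;\geq\;A_i-\epsilon,\qquad i=1,2.$$
By rescaling $\rho_i$ (which preserves this ratio) we may assume $L(\Gamma_i,\rho_i)=1$, so that $A(\Omega_i,\rho_i)\leq (A_i-\epsilon)^{-1}$. Given parameters $t_1,t_2>0$ to be chosen below, define $\rho:\Omega\to[0,+\infty)$ by $\rho:=t_i\rho_i$ on $\Omega_i$ (and, say, $\rho:=0$ on the Borel null set $C$); this is clearly Borel measurable.

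Next, I would exploit the separation property of the crosscut. Any rectifiable $\gamma\in\Gamma$ must pass from $\overline{\Omega_1}$ into $\overline{\Omega_2}$, and hence must meet $C$; choosing the first such crossing gives a decomposition into subarcs $\gamma_1\in\Gamma_1$ and $\gamma_2\in\Gamma_2$. Consequently
$$L(\gamma,\rho)\;\geq\; t_1 L(\gamma_1,\rho_1)+t_2 L(\gamma_2,\rho_2)\;\geq\; t_1+t_2,$$
so $L(\Gamma,\rho)\geq t_1+t_2$. Since the supports of $t_1\rho_1$ and $t_2\rho_2$ are (essentially) disjoint, the area adds:
$$A(\Omega,\rho)\;=\;t_1^{2}A(\Omega_1,\rho_1)+t_2^{2}A(\Omega_2,\rho_2)\;\leq\;\frac{t_1^{2}}{A_1-\epsilon}+\frac{t_2^{2}}{A_2-\epsilon}.$$

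Finally, plug this admissible $\rho$ into the definition of $\lambda_{\Omega}(\Gamma)=\lambda_\Omega(E,F)$ and optimize: choosing $t_i:=A_i-\epsilon$ collapses the upper bound on the area to $(A_1-\epsilon)+(A_2-\epsilon)$, and the numerator $(t_1+t_2)^2$ to $((A_1-\epsilon)+(A_2-\epsilon))^2$. Therefore
$$\lambda_{\Omega}(E,F)\;\geq\;\frac{(t_1+t_2)^{2}}{A(\Omega,\rho)}\;\geq\;(A_1-\epsilon)+(A_2-\epsilon),$$
and letting $\epsilon\to 0^{+}$ yields the desired inequality. The main delicate point is the geometric step that every curve in $\Gamma$ decomposes into subcurves of $\Gamma_1$ and $\Gamma_2$: this relies on $C$ being a genuine crosscut that separates $\Omega$ into $\Omega_1$ and $\Omega_2$, so that the trace of $\gamma$ must intersect $C$ (in particular one needs to be careful with the parametrization of $\gamma$ in case $C\cap\gamma$ has complicated topology, but taking the first hitting time of $C$ resolves this). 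The remainder of the argument is a short Cauchy--Schwarz-style optimization.
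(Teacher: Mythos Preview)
The paper does not supply its own proof of this lemma; it is quoted as a known result with references to \cite[Proposition 4.6]{Beliaev} and \cite[Chapter IV, Section 3.2]{GM}. Your argument is the standard one found in those references: glue near-extremal metrics $\rho_1,\rho_2$ on the two pieces, use the separation property of the crosscut to get $L(\gamma,\rho)\ge t_1+t_2$, use disjointness of the supports to add the areas, and then optimize over $t_1,t_2$. This is correct.

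Two small remarks. First, your normalization $L(\Gamma_i,\rho_i)=1$ and the choice $t_i=A_i-\epsilon$ tacitly assume $0<A_i<\infty$; the degenerate cases $A_i=0$ or $A_i=+\infty$ are trivial but should be mentioned for completeness. Second, in the decomposition step it is slightly cleaner to take the \emph{last} time $\gamma$ exits $\overline{\Omega_1}$ and the \emph{first} time it enters $\overline{\Omega_2}$ after that, so that the two subarcs genuinely lie in $\Omega_1$ and $\Omega_2$ respectively (with endpoints on $C$); your ``first hitting time of $C$'' works for $\gamma_1$, but the remaining piece of $\gamma$ need not lie entirely in $\Omega_2$, so you also need a last hitting time to extract $\gamma_2$. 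Since any discarded middle portion only helps the lower bound on $L(\gamma,\rho)$, this does not affect the argument.
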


In this article, extremal distance will be used as a tool to estimate the harmonic measure. This is based on the following result, the so-called \textit{Beurling's Estimate}:
\begin{theorem}
\label{thm:Beurling}
{\normalfont\cite[Chapter IV, Theorem 5.3]{GM}}
Let $\Omega \subset \C$ be a Jordan domain. Consider $z \in \Omega$ and suppose that $E\subset\partial\Omega$ is a finite union of arcs. Let $\gamma$ be a Jordan arc joining $z$ and $\partial\Omega\setminus E$ inside $\Omega$. Then,
\begin{equation}\label{eq:beurling}
\omega(z,E,\Omega) \leq \dfrac{8}{\pi}\exp(-\pi\lambda_{\Omega \setminus \gamma}(\gamma,E)).
\end{equation}
\end{theorem}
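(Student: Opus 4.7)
Proof proposal: This is a classical Beurling-type estimate; my plan is to combine the conformal invariance of both sides with an explicit computation in a canonical slit-strip model. The argument naturally splits into three stages.

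First stage (analytic completion of the harmonic measure). Since $\Omega$ is a simply connected Jordan domain, the bounded harmonic function $u(w) := \omega(w, E, \Omega)$ admits a single-valued harmonic conjugate $v$ on $\Omega$, so that $f := u + iv$ is a holomorphic map taking $\Omega$ into the open strip $\Sigma := \{\zeta \in \C : 0 < \mathrm{Re}\,\zeta < 1\}$ with $u \to 1$ on $E$ and $u \to 0$ on $F := \partial\Omega \setminus E$. Set $M := \omega(z,E,\Omega)$, so that $\mathrm{Re}\,f(z) = M$, and our task is to bound $M$. Because the endpoint of $\gamma$ lies in $F$, the image curve $f(\gamma)$ runs inside $\overline{\Sigma}$ from $f(z)$ (a point of real part $M$) to a point on the left edge $\{\mathrm{Re}\,\zeta = 0\}$.

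Second stage (transfer to the slit strip). Every rectifiable curve $\alpha$ in $\Omega \setminus \gamma$ connecting $\gamma$ to $E$ is sent by $f$ to a curve joining $f(\gamma)$ to the right edge $\{\mathrm{Re}\,\zeta = 1\}$ inside $\Sigma$. Pulling back an admissible metric $\rho$ from $\Sigma$ via $\tilde\rho = (\rho \circ f)\lvert f'\rvert$ and bookkeeping the multiplicity of $f$ in the area integral yields the extremal-length comparison
\begin{equation*}
\lambda_{\Omega\setminus\gamma}(\gamma, E) \;\leq\; \lambda_{\Sigma \setminus f(\gamma)}\bigl(f(\gamma),\,\{\mathrm{Re}\,\zeta = 1\}\bigr).
\end{equation*}
Thus, it suffices to prove the Beurling bound in the canonical model $(\Sigma, f(\gamma), \{\mathrm{Re}=1\})$.

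Third stage (model computation and the constant $8/\pi$). After a vertical translation we may assume $f(z)$ is real. The Serial Rule (Lemma \ref{lemma:serial-rule}), applied by inserting the vertical crosscut $\{\mathrm{Re}\,\zeta = M\}$ that separates $f(\gamma)$ from $\{\mathrm{Re}=1\}$, reduces matters to estimating the extremal distance in the subrectangle $\{M < \mathrm{Re}\,\zeta < 1\}$ (handled by Example \ref{ex:rectangle}) plus the extremal distance from $f(\gamma)$ to this crosscut inside the slit half-strip $\{0 < \mathrm{Re}\,\zeta < M\} \setminus f(\gamma)$. Mapping $\Sigma$ conformally to the upper half-plane via $\zeta \mapsto \sin(\pi\zeta/2)$ turns the latter into an estimate in a half-disk with a radial slit, which after symmetrization (where the Beurling Projection Theorem \ref{BeurlingProjection} enters) gives the sharp comparison
\begin{equation*}
\lambda_{\Sigma\setminus f(\gamma)}\bigl(f(\gamma),\{\mathrm{Re}\,\zeta=1\}\bigr) \;\leq\; \frac{1}{\pi}\log\frac{8}{\pi M}.
\end{equation*}
Combining with the previous stage and exponentiating yields $M \leq (8/\pi)\exp(-\pi\lambda_{\Omega\setminus\gamma}(\gamma,E))$, as required.

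The main obstacle is the middle stage: since $f$ need not be injective, the extremal-length inequality requires a careful multiplicity argument (one form of the subordination/contraction principle for modulus of curve families under holomorphic maps), which is the genuinely delicate ingredient. The first stage is purely formal, and the third stage, once symmetrized by Beurling projection into a radial slit configuration, becomes a direct computation whose sharp constant is precisely $8/\pi$.
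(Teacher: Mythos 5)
There is no internal proof to compare against here: the paper quotes this statement directly from \cite[Chapter IV, Theorem 5.3]{GM}. Judged on its own, your outline has the right flavour (analytic completion of the harmonic measure, reduction to a canonical slit model, symmetrization), but two of your comparison steps are used in the wrong direction, and the quantitative core is asserted rather than proved. The main problem is the second stage. When $E$ has more than one component, $f=u+iv$ is a proper \emph{branched covering} of the strip $\Sigma$ of degree equal to the number of arcs, so it is not injective; pulling back an admissible $\rho$ via $\tilde\rho=(\rho\circ f)\abs{f'}$ makes the area integral $\iint_{\Sigma}\rho^2N\,du\,dv$ with counting function $N\ge 1$, and what this yields is an upper bound for the modulus of the curve family in $\Omega\setminus\gamma$, i.e.\ a \emph{lower} bound of the form $\lambda_{\Omega\setminus\gamma}(\gamma,E)\ge \lambda_{\Sigma\setminus f(\gamma)}(f(\gamma),\{\mathrm{Re}\,\zeta=1\})/\deg f$ --- not the inequality $\lambda_{\Omega\setminus\gamma}(\gamma,E)\le\lambda_{\Sigma\setminus f(\gamma)}(f(\gamma),\{\mathrm{Re}\,\zeta=1\})$ that your final chain needs. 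The inequality you actually want is true, but it is obtained the other way around: lift each curve of the strip configuration back to $\Omega\setminus\gamma$ (possible since $f$ is a proper branched covering and the lift avoids $\gamma\subset f^{-1}(f(\gamma))$ after its initial point) and push the metric forward, e.g.\ by $\rho_2(w)=\max_{z\in f^{-1}(w)}\rho_1(z)/\abs{f'(z)}$; your pullback bookkeeping proves the reverse comparison. (Only when $E$ is a single arc is $f$ a conformal bijection onto $\Sigma$ and your step correct, with equality.)

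The third stage has analogous issues. The Serial Rule (Lemma \ref{lemma:serial-rule}) produces \emph{lower} bounds for extremal distance, whereas at this point you need the upper bound $\lambda_{\Sigma\setminus f(\gamma)}(f(\gamma),\{\mathrm{Re}\,\zeta=1\})\le\frac{1}{\pi}\log\frac{8}{\pi M}$, so the serial decomposition works against you; moreover the crosscut $\{\mathrm{Re}\,\zeta=M\}$ need not separate $f(\gamma)$ from the right edge, because $\gamma$ is prescribed in the theorem and may run through points where the harmonic measure exceeds $M$, so $f(\gamma)$ can protrude to the right of that line; and the region $\{M<\mathrm{Re}\,\zeta<1\}$ has infinite height, so Example \ref{ex:rectangle} contributes $0$ rather than a rectangle modulus. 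Finally, the estimate with the sharp constant $8/\pi$ in the slit model is precisely the content of the theorem: this is where the classical argument does its real work, comparing with the radial slit via the Beurling Projection Theorem \ref{BeurlingProjection} and the explicit value \eqref{eq:slitted-disk}, and it cannot be dismissed as ``a direct computation.'' (There are also minor fixable points in the first stage: the endpoint of $\gamma$ may be an endpoint of an arc of $E$, where $u$ has no limit, and $f(\gamma)$ need not terminate at a point of the left edge --- only its real part tends to $0$.) As it stands, the proposal is an outline whose two transfer inequalities point the wrong way and whose sharp model estimate is missing, so it does not constitute a proof; for a correct argument along these lines, consult the cited source \cite[Chapter IV, Theorem 5.3]{GM}.
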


\section{Forward Orbits in Non-elliptic Semigroups}
We commence the principal body of our article by proving our first main results concerning the rates of convergence of forward orbits. In the first subsection we deal with the upper bound for each type of non-elliptic semigroups, while we devote the second subsection to the unified lower bound.

\subsection{Upper Bounds}
We know that given a non-elliptic semigroup $(\phi_t)$, the forward orbit can be defined for every point in $\overline{\D}$. Of course, in the case of boundary points, their forward orbits are taken in terms of non-tangential limits. Starting from a boundary point $\zeta \in \partial\D$ that is not a fixed point, there are two distinct cases. Either the whole orbit of $\zeta$ lies on $\partial\D$ or the initial part of it lies on $\partial\D$ and then there exists $t_0\ge0$ such that $\gamma_\zeta(t)=\phi_t(\zeta)\in\D$, for all $t> t_0$. It is easy to check that the first case does not apply to parabolic semigroups of zero hyperbolic step due to the geometry of the respective Koenigs domain.
\begin{proof}[Proof of Theorem \ref{thm:forward}] 
As we already mentioned in Section 2, the result trivially holds for the boundary fixed points of the semigroup, for any type of non-elliptic semigroups. All that remains is to prove our theorem about the non-fixed points of $(\phi_t)$.

(a) Suppose first that $(\phi_t)$ is parabolic of zero hyperbolic step and let $\Omega:=h(\D)$ be its Koenigs domain. Recall that $h(0)=0$. Let $z\in\overline{\D}$ such that $z$ is not a fixed point for $(\phi_t)$ and $t>2(|h(z)|+\delta_\Omega(0))$. Set $$
a_t=\{\gamma_z(s):s\in[t,+\infty)\}=\{\phi_s(z):s\in[t,+\infty)\},$$
$A_t=h(a_t)$ and $d_t=\text{diam}[a_t]$. Certainly $|\gamma_z(t)-\tau|\le d_t \le 2$. Consider $E_d$ to be an arc on the unit circle such that $\text{diam}[E_d]=d_t$ (in case $d_t=2$, we take $E_d$ to be a half-circle). Then, it is known (see for example \cite[p. 96]{Ransford}) that 
$$\omega(0,E_d,\D)=\frac{|E_d|}{2\pi}=\frac{1}{\pi}\arcsin\frac{d_t}{2},$$
where $|\cdot|$ denotes arc-length. Combining this with Lemma \ref{Solynin}, we get
$$d_t=2\sin(\pi\omega(0,E_d,\D))\le2\pi\omega(0,E_d,\D)\le2\pi\omega(0,a_t,\D\setminus a_t),$$
which in turn implies
\begin{equation}\label{rate-harmonic-relation}
    |\gamma_z(t)-\tau|\le 2\pi\omega(0,a_t,\D\setminus a_t)=2\pi\omega(0,A_t,\Omega\setminus A_t),
\end{equation}
with this last inequality resulting through the conformal invariance of the harmonic measure. 

At this point we have to make two remarks. First of all, for all the above inequalities to make sense, it is imperative that $0\notin a_t$. But the initial condition $t>2(|h(z)|+\delta_\Omega(0))$ certifies that $0\notin A_t$, and since $h(0)=0$, then $0\notin a_t$. Secondly, if $a_t$ joins two points of the unit circle, then $\D\setminus a_t$ consists of two connected components. We make the convention that we still denote by $\D\setminus a_t$ the one containing $0$. The same holds for $\Omega\setminus A_t$. Similar remarks are valid in the next cases too, so we will refrain from pointing them out again.

 So, now we need to estimate the quantity $\omega(0,A_t,\Omega\setminus A_t)$. Recall that by the definition of the Koenigs function, we have $A_t=\{h(z)+s:s\in[t,+\infty)\}$, which is just a horizontal half-line stretching to $\infty$ towards the positive direction. Consider $q=de^{i\theta}\in\partial\Omega$ to be the point of the boundary of the Koenigs domain that is closest (in terms of Euclidean distance) to $0$. In case there are more than one point $q\in\partial\Omega$ satisfying $|q|=d=\text{dist}(0,\partial\Omega)$, the choice of $q$ can be made arbitrarily. Set $L_q=\{q-s:s\ge0\}$. Observe that $\text{Re}q \leq 0$, while $\text{Re}h(z) + t > 0$, and thus $A_t\cap L_q=\emptyset$. Evidently $\Omega\setminus A_t\subset\C\setminus(A_t\cup L_q)$ because $\Omega$ is convex in the positive direction. In addition, $A_t\subset\partial(\Omega\setminus A_t)\cap\partial(\C\setminus(A_t\cup L_q))$; see Figure \ref{fig:forward-upper-0HS}. Therefore, by the monotonicity property of harmonic measure, we obtain
\begin{equation}\label{harmonic-monotonicity}
\omega(0,A_t,\Omega\setminus A_t)\le\omega(0,A_t,\C\setminus(A_t\cup L_q)).
\end{equation}

\begin{figure}[h]
\centering
\begin{subfigure}[t]{0.49\textwidth}
\centering
\begin{tikzpicture}[scale=0.9]
\fill [blue!10] (-2,-0.5) -- (-2,0.5) -- plot [smooth] coordinates {(-2,0.5) (-1, 0.75) (0,1) (1,1.5) (2,2.5) (3,3)} -- (3,3) -- (3,-2) --plot [smooth] coordinates {(3,-2) (1,-1.5) (0,-1) (-2,-0.5)};
\draw [black,thick] plot [smooth] coordinates {(-2,0.5) (-1, 0.75) (0,1) (1,1.5) (2,2.5) (3,3)};
\draw [black,thick] plot [smooth] coordinates {(-2,-0.5) (0,-1) (1,-1.5) (3,-2)};
\node[] at (2.7,2.5) {$\Omega$};
\fill[fill=black] (-0.89,0.3) circle (0.06); \node[] at (-0.89,0) {$0$};
\fill[fill=black] (-1,0.75) circle (0.06); \node[] at (-1,1.05) {$q$};
\fill[fill=black] (0,-0.2) circle (0.06); \node[] at (0,-0.6) {$h(z)$};
\fill[fill=black] (1.5,-0.2) circle (0.06); \node[] at (1.5,-0.6) {$h(z)+t$};
\draw [-] (1.5,-0.2)--(3,-0.2); \node[] at (3.4,-0.2) {$A_t$};
\draw [dotted] (-1,0.75) -- (-2,0.75); \node[] at (-2.4,0.75) {$L_q$};
\end{tikzpicture}
\caption{}
\end{subfigure}
\begin{subfigure}[t]{0.49\textwidth}
\centering
\begin{tikzpicture}[scale=0.9]
\draw [fill=blue!10,blue!10] (-2,-2) rectangle (3,3);
\node[] at (1.9,2.7) {$\mathbb{C} \setminus (A_t \cup L_q)$};
\fill[fill=black] (-0.89,0.3) circle (0.06); \node[] at (-0.89,0) {$0$};
\fill[fill=black] (-1,0.75) circle (0.06); \node[] at (-1,1.05) {$q$};
\fill[fill=black] (0,-0.2) circle (0.06); \node[] at (0,-0.6) {$h(z)$};
\fill[fill=black] (1.5,-0.2) circle (0.06); \node[] at (1.5,-0.6) {$h(z)+t$};
\draw [-] (1.5,-0.2)--(3,-0.2); \node[] at (3.4,-0.2) {$A_t$};
\draw [-] (-1,0.75)--(-2,0.75); \node[] at (-2.4,0.75) {$L_q$};
\end{tikzpicture}
\caption{}
\end{subfigure}
\caption{Construction in the proof of Theorem \ref{thm:forward}(a).}
\label{fig:forward-upper-0HS}
\end{figure}

We leave aside harmonic measure for a moment and we start working with hyperbolic distance. By Theorem \ref{DistanceLemma} and since the rectilinear segment $[0,q]$ is contained inside $\C\setminus A_t$ because $t>2(|h(z)|+d)$, we have
\begin{eqnarray}\label{hyperbolic}
 \notag d_{\C\setminus A_t}(0,q)&\le&\int\limits_{[0,q]}\frac{|dw|}{\delta_{\C\setminus A_t}(w)}=\int\limits_{0}^{d}\frac{ds} {\delta_{\C\setminus A_t}(se^{i\theta})}\\
    &\le&\int\limits_{0}^{d}\frac{ds}{|h(z)+t|-s}=\log\frac{|h(z)+t|}{|h(z)+t|-d}.
\end{eqnarray}

Using a suitable conformal mapping of $\C\setminus A_t$ onto $\D$, we may map $0$ and $q$ to $0$ and $r\in(0,1)$, respectively. In this way, the horizontal half-line $L_q$ is mapped onto a curve $\gamma_r$ in $\D$ joining $r$ to some point of the unit circle. Thus, by the conformal invariance of harmonic measure, Theorem \ref{BeurlingProjection} and relation (\ref{eq:slitted-disk}) we see that
$$\omega(0,A_t,\C\setminus(A_t\cup L_q))=\omega(0,\partial\D,\D\setminus\gamma_r)\le\omega(0,\partial\D,\D\setminus[r,1))=1-\frac{2}{\pi}\arcsin\frac{1-r}{1+r}.$$
Combining with (\ref{rate-harmonic-relation}) and (\ref{harmonic-monotonicity}), all in all, we have found that
\begin{equation}\label{rate-r}
    |\gamma_z(t)-\tau|\le 2\pi\omega(0,\partial\D,\D\setminus[r,1)).
\end{equation}
However, by the conformal invariance of the hyperbolic distance, we have that $d_{\C\setminus A_t}(0,q)=d_\D(0,r)$ which through (\ref{hyperbolic distance in the unit disk}) and (\ref{hyperbolic}) implies that
$$\log\frac{|h(z)+t|}{|h(z)+t|-d}\ge\frac{1}{2}\log\frac{1+r}{1-r},$$
or equivalently
\begin{equation}
y^2:=\left(\frac{|h(z)+t|-d}{|h(z)+t|}\right)^2\le\frac{1-r}{1+r},
\end{equation}
where $y>0$. After some straightforward calculations, we are led to
$$y^2\le\sin\left(\frac{\pi}{2}-\frac{\pi\omega(0,\partial\D,\D\setminus[r,1))}{2}\right)=\cos\frac{\pi\omega(0,\partial\D,\D\setminus[r,1))}{2}.$$
Consequently, using elementary inverse trigonometric identities
\begin{eqnarray}
\notag    \omega(0,\partial\D,\D\setminus[r,1))&\le&\frac{2}{\pi}\arccos y^2=\frac{2}{\pi}\arcsin\sqrt{1-y^4}\\
 \notag   &\le&\frac{2}{\pi}\cdot\frac{\pi}{2}\sqrt{1-y^4}\le2\sqrt{1-y}\\
    &=&2\sqrt{\frac{d}{|h(z)+t|}}\le\frac{2\sqrt{d}}{\sqrt{t-|h(z)|}}.
\end{eqnarray}
Remembering that from the start $t>2(|h(z)|+d)>2|h(z)|$ and that $d=\delta_\Omega(0)$, we return to \eqref{rate-r} to find that
\begin{equation}\label{eq:almost done}
|\gamma_z(t)-\tau|\le\frac{4\sqrt{2}\pi\sqrt{\delta_\Omega(0)}}{\sqrt{t}}.
\end{equation}
Nevertheless, by \cite[Corollary 1.4]{Pommerenke} we get that $\delta_\Omega(0)\le|h'(0)|$. As a result, (\ref{eq:almost done}) turns into
$$|\gamma_z(t)-\tau|\le4\sqrt{2}\pi\sqrt{|h'(0)|}\cdot\frac{1}{\sqrt{t}}, \quad\text{for all }t>2(|h(z)|+d).$$
Finally, for $t\in(0,2(|h(z)|+d)]$, we trivially see that
$$|\gamma_z(t)-\tau|\le2=\frac{2\sqrt{t}}{\sqrt{t}}\le 2\sqrt{2}\frac{\sqrt{|h(z)|}+\sqrt{d}}{\sqrt{t}}\le4\sqrt{2}\pi\left(\sqrt{|h(z)|}+\sqrt{|h'(0)|}\right)\cdot\frac{1}{\sqrt{t}}.$$
Combining, we obtain the desired result for all $t>0$ and all $z\in\overline{\D}$, since the initial choice of $z$ was arbitrary.

(b) Next, suppose that $(\phi_t)$ is parabolic of positive hyperbolic step and let $\Omega:=h(\D)$ be its Koenigs domain. Let $z\in\overline{\D}$. Recall that $\text{Re}h(0)=0$. Without loss of generality, we may assume that $\H$ is the smallest horizontal half-plane containing $\Omega$. Let $t>2|h(z)|$. As in the previous case, we have $A_t=\{h(z)+s:s\in[t,+\infty)\}$ and through the same arguments with the diameter estimate for harmonic measure, we get $|\gamma_z(t)-\tau|\le2\pi\omega(h(0),A_t,\Omega\setminus A_t)$. By the domain monotonicity property of harmonic measure, we also get 
\begin{equation}\label{rate-phs}
    |\gamma_z(t)-\tau|\le2\pi\omega(h(0),A_t,\H\setminus A_t).
\end{equation}
Set $A_t^{*}=\{\text{Re}h(z)+s:s\in[t,+\infty)\}\subset\partial\H$. In other words, $A_t^{*}$ is the projection of $A_t$ on the real line. Of course, in case $\gamma_z([0,+\infty))\subset\partial\D$, we have $A_t^{*}=A_t$. First suppose that this does not hold and so $A_t\ne A_t^{*}$. Obviously $A_t^{*}\subset\partial\H\cap\partial(\H\setminus A_t)$, while $\H\setminus A_t\subset\H$; see Figure \ref{fig:forward-upper-PHS-H}(A). Therefore, by the Strong Markov Property,
\begin{eqnarray}\label{eq:markov}
 \notag   \omega(h(0),A_t^{*},\H)&=&\omega(h(0),A_t^{*},\H\setminus A_t)+\int\limits_{A_t}\omega(\zeta,A_t^{*},\H)\cdot\omega(h(0),d\zeta,\H\setminus A_t)\\    \notag&\ge&\int\limits_{A_t}\omega(\zeta,A_t^{*},\H)\cdot\omega(h(0),d\zeta,\H\setminus A_t)\\
    &\ge&\min\limits_{\zeta\in A_t}\omega(\zeta,A_t^{*},\H)\cdot\omega(h(0),A_t,\H\setminus A_t).
\end{eqnarray}
By the geometry of the upper half-plane $\H$ and the slit $A_t$, it is clear that the minimum $\min_{z\in A_t}\omega(\zeta,A_t^{*},\H)$ is attained exactly on $h(z)+t$. In fact, because of symmetry, $\omega(h(z)+t,A_t^{*},\H)=\frac{1}{2}$; see Figure \ref{fig:forward-upper-PHS-H}(A). So, returning back to the Strong Markov Property and (\ref{eq:markov}), we have found that
\begin{equation}\label{markov1}
    \omega(h(0),A_t,\H\setminus A_t)\le 2\omega(h(0),A_t^*,\H).
\end{equation}

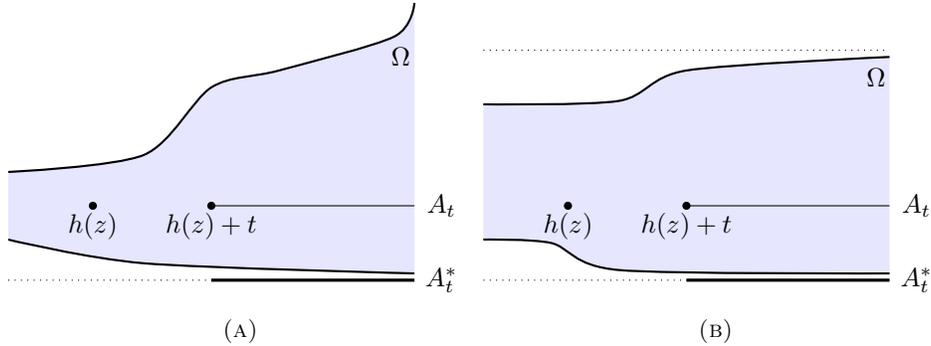
\begin{figure}[h]
\captionsetup{justification=centering}
\centering
\begin{subfigure}[t]{0.49\textwidth}
\begin{tikzpicture}[scale=0.9]
\fill [blue!10] (-2,-0.5) -- (-2,0.5) -- plot [smooth] coordinates {(-2,0.5) (0,0.75) (1,1.75) (2,2) (3.7,2.5) (4,3)} -- (4,3) -- (4,-1) --plot [smooth] coordinates {(4,-1) (0,-0.85) (-2,-0.5)};
\draw [black,thick] plot [smooth] coordinates {(-2,0.5) (0,0.75) (1,1.75) (2,2) (3.7,2.5) (4,3)};
\draw [black,thick] plot [smooth] coordinates {(-2,-0.5) (0,-0.85) (4,-1)};
\node[] at (3.8,2.2) {$\Omega$};
\fill[fill=black] (-0.75,0) circle (0.06); \node[] at (-0.75,-0.3) {$h(z)$};
\fill[fill=black] (1,0) circle (0.06); \node[] at (1,-0.3) {$h(z)+t$};
\draw [-] (1,0)--(4,0); \node[] at (4.4,0) {$A_t$};
\draw [very thick] (1,-1.1)--(4,-1.1); \node[] at (4.4,-1.1) {$A_t^*$};
\draw [dotted] (-2,-1.1)--(4,-1.1);
\end{tikzpicture}
\caption{}
\end{subfigure}
\begin{subfigure}[t]{0.49\textwidth}
\begin{tikzpicture}[scale=0.9]
\fill [blue!10] (-2,-0.5) -- (-2,1.5) -- plot [smooth] coordinates {(-2,1.5) (0,1.55) (1,2) (4,2.2)} -- (4,2.2) -- (4,-1) --plot [smooth] coordinates {(4,-1) (0,-0.95) (-1,-0.55) (-2,-0.5)};
\draw [black,thick] plot [smooth] coordinates {(-2,1.5) (0,1.55) (1,2) (4,2.2)};
\draw [black,thick] plot [smooth] coordinates {(-2,-0.5) (-1,-0.55) (0,-0.95)(4,-1)};
\node[] at (3.8,1.9) {$\Omega$};
\fill[fill=black] (-0.75,0) circle (0.06); \node[] at (-0.75,-0.3) {$h(z)$};
\fill[fill=black] (1,0) circle (0.06); \node[] at (1,-0.3) {$h(z)+t$};
\draw [-] (1,0)--(4,0); \node[] at (4.4,0) {$A_t$};
\draw [very thick] (1,-1.1)--(4,-1.1); \node[] at (4.4,-1.1) {$A_t^*$};
\draw [dotted] (-2,-1.1)--(4,-1.1);
\draw [dotted] (-2,2.3)--(4,2.3);
\end{tikzpicture}
\caption{}
\end{subfigure}
\caption{(A) Construction in the proof of Theorem \ref{thm:forward}(b), \\ (B) Construction in the proof of Theorem \ref{thm:forward}(c).}
\label{fig:forward-upper-PHS-H}
\end{figure}

Of course, if at the start $A_t=A_t^{*}$, relation (\ref{markov1}) holds trivially. Combining (\ref{rate-phs}) and (\ref{markov1}) and using the conformal invariance of the harmonic measure, in any case we get
\begin{eqnarray}\label{eq:almost done 2}
\notag    |\gamma_z(t)-\tau|&\le&4\pi\omega(h(0),A_t^{*},\H)\\
\notag    &=&4\pi\omega(h(0)-\text{Re}h(z)-t,[0,+\infty),\H)\\
\notag    &=&4\pi\frac{\pi-\arg(h(0)-\text{Re}h(z)-t)}{\pi}\\
\notag    &=&4\left(\pi-\pi+\arctan\frac{\text{Im}h(0)}{t+\text{Re}h(z)}\right)\\
    &\le&\frac{4\text{Im}h(0)}{t+\text{Re}h(z)},
\end{eqnarray}
where we have used that $\text{Re}(h(0)-\text{Re}h(z)-t) = -\text{Re}h(z)-t < 0$ since $t > 2\abs{h(z)}$. We also have that $\text{Im}h(0)=|h(0)|$ and $t+\text{Re}h(z)>\frac{t}{2}$. As a result, plugging into (\ref{eq:almost done 2}) we obtain
$$|\gamma_z(t)-\tau|\le\frac{8|h(0)|}{t}, \quad\text{for all }t>2|h(z)|.$$
Finally, whenever $t\in(0,2|h(z)|]$, we may easily compute that
$$|\gamma_z(t)-\tau|\le2=\frac{2t}{t}\le\frac{4|h(z)|}{t}.$$
So, all in all, we have
$$|\gamma_z(t)-\tau|\le\frac{8\max\{|h(0)|,|h(z)|\}}{t}, \quad\text{for all }t>0.$$

(c) Finally, we proceed to the case where $(\phi_t)$ is hyperbolic. Let $\Omega:=h(\D)$ be the Koenigs domain of the semigroup. Recall that $S = \{w\in \C : 0 < \text{Im}w < \pi/\lambda\}$, where $\lambda$ is the spectral value of $(\phi_t)$, is the smallest horizontal strip containing $\Omega=h(\D)$ and that $\text{Re}h(0)=0$. Fix $z \in \overline{\D}$ and let $t>\log2/\lambda-\text{Re}h(z)$. As usual, denote $A_t=\{h(z) + s : s\in[t,+\infty)\}$. As before,
\begin{equation}
\abs{\gamma_z(t) - \tau} \leq 2\pi \omega(h(0),A_t,\Omega \setminus A_t) \leq 2\pi \omega(h(0),A_t,S \setminus A_t).
\end{equation}
If $\mathrm{Im}h(0) \leq \pi/(2\lambda)$, define the projection $A^*_t = \{\mathrm{Re}h(z)+s : s \in [t,+\infty)\}$. Otherwise, define $A^*_t = \{\mathrm{Re}h(z)+s + i\pi/\lambda: s \in [t,+\infty)\}$. In any case, $A^*_t\subset\partial S$. Evidently, if $\gamma_z([0,+\infty))\subset\partial\D$, then $A_t=A_t^{*}$; see Figure \ref{fig:forward-upper-PHS-H}(B). First assume that this is not true. By the Strong Markov Property,
\begin{equation}
\omega(h(0),A^*_t,S) = \omega(h(0),A^*_t, S \setminus A_t) + \int\limits_{A_t}\omega(\zeta,A^*_t,S)\cdot\omega(h(0),d\zeta,S \setminus A_t).
\end{equation}
Note that $\omega(h(0),A^*_t, S \setminus A_t) \geq 0$. Due to the way we defined the projection onto $\partial S$, $\omega(\zeta,A^*_t,S) \geq 1/4$ for all $\zeta \in A_t$; see Figure \ref{fig:forward-upper-PHS-H}(B). All in all, we deduce that
\begin{equation}\label{rate-hyper}
    \abs{\gamma_z(t) - \tau} \leq 8\pi\omega(h(0),A^*_t,S).
\end{equation}
On the other hand, whenever $A_t=A_t^{*}$, the last inequality is trivial since we already deduced it in the first steps of the proof with constant $2\pi$ instead of $8\pi$. So, in any case, using a chain of conformal mappings that leave the harmonic measure invariant, we may map the horizontal strip $S$ conformally onto the right half-plane $\C_0$ and write, in the case when $\text{Im}h(0)\le\pi/(2\lambda)$,
\begin{eqnarray}\label{eq:almost done 3}
\notag    \omega(h(0),A^*_t,S)&=&\omega(h(0)-\text{Re}h(z)-t,A^*_t-\text{Re}h(z)-t,S)\\
\notag    &=&\omega(-ie^{\lambda h(0)-\lambda\text{Re}h(z)-\lambda t}+i,\{is:s\in(-\infty,0]\},\C_0)\\
\notag    &=&\frac{\frac{\pi}{2}-\arg(-ie^{\lambda h(0)-\lambda\text{Re}h(z)-\lambda t}+i)}{\pi}\\
\notag    &=&\dfrac{\frac{\pi}{2}-\arctan\dfrac{1-e^{-\lambda\text{Re}h(z)-\lambda t}\cos(\lambda\text{Im}h(0))}{e^{-\lambda\text{Re}h(z)-\lambda t}\sin(\lambda\text{Im}h(0))}}{\pi}\\
\notag    &=&\frac{1}{\pi}\arctan\dfrac{e^{-\lambda\text{Re}h(z)-\lambda t}\sin(\lambda\text{Im}h(0))}{1-e^{-\lambda\text{Re}h(z)-\lambda t}\cos(\lambda\text{Im}h(0))}\\
\notag    &\le&\frac{1}{\pi}\frac{e^{-\lambda\text{Re}h(z)-\lambda t}}{1-e^{-\lambda\text{Re}h(z)-\lambda t}}\\
    &\le&\frac{2}{\pi}e^{-\lambda\text{Re}h(z)}e^{-\lambda t},
\end{eqnarray}
where we have used the fact that $t>\log2/\lambda-\text{Re}h(z)$ several times. Combining (\ref{eq:almost done 3}) with (\ref{rate-hyper}), we obtain
$$|\gamma_z(t)-\tau|\le16e^{-\lambda\text{Re}h(z)}e^{-\lambda t}, \quad\text{for all }t>\dfrac{\log2}{\lambda}-\text{Re}h(z).$$
A similar procedure provides the same result if $\text{Im}h(0)>\pi/(2\lambda)$.
Finally, for $t\in[0,\log2/\lambda-\text{Re}h(z)$] (at least whenever this number is non-negative), in trivial fashion we find
$$|\gamma_z(t)-\tau|\le 2=2e^{\lambda t}e^{-\lambda t}\le4e^{-\lambda\text{Re}h(z)}e^{-\lambda t}.$$
Combining, we deduce the desired result for all $z\in\overline{\D}$ and all $t\ge0$.
\end{proof}
\subsection{Lower Bound}

We continue with the corresponding lower bound for the rate of convergence to the Denjoy-Wolff point. Given a non-elliptic semigroup $(\phi_t)$, we know that its Koenigs domain $\Omega$ is convex in the positive direction. Hence, it is easily observed that $\Omega$ contains horizontal half-strips that stretch to infinity in the positive direction and whose width may be enlarged as we move it towards the right. In particular, if $(\phi_t)$ is hyperbolic, then this width is always bounded above by $\pi/\lambda$, where $\lambda>0$ is the spectral value of the semigroup. This is because $\Omega$ is necessarily contained inside a minimal horizontal strip of width $\pi/\lambda$ (see Section 3). On the other hand, if $(\phi_t)$ is parabolic, then the width of the half-strips can get arbitrarily large (this time, $\lambda=0$). The inclusion of horizontal half-strips inside $\Omega$ will be the principal mechanism through which we are going to prove Theorem \ref{thm:lower}.

\begin{proof}[Proof of Theorem \ref{thm:lower}] Let $h$ be the Koenigs function of $(\phi_t)$ and $\Omega:=h(\D)$ its Koenigs domain. For the initially picked $z$, let $\ell_t$ be the length of the connected component of the set $\{w \in \C : \mathrm{Re}w = \mathrm{Re}h(z)+t\}\cap\Omega$ that contains $z$. Note that $\ell_t$ is a non-decreasing function of $t$, and $\lim_{t \to +\infty}\ell_t = \pi/\lambda$ (where we allow $\pi/0 = +\infty$ if $(\phi_t)$ is parabolic), since $\Omega$ is convex in the positive direction. Let $\epsilon > 0$, and find $T > 0$ such that $\ell_T \geq \pi/(\lambda + \epsilon)$. In that case, we can also find a horizontal half-strip $S_T \subset \Omega$ that stretches to the right, whose width is
\begin{equation}\label{eq:width}
d_T = \dfrac{\pi}{\lambda + \epsilon},
\end{equation}
and such that $h(z)+T$ is on the vertical side of $S_T$; see Figure \ref{fig:forward-lower}. Using the formula for the hyperbolic distance in the unit disk, the triangle inequality and the conformal invariance of the hyperbolic distance, we have
\begin{eqnarray}\label{LB1}
     \notag   |\phi_t(z)-\tau|&\ge&\frac{1-|\phi_t(z)|}{1+|\phi_t(z)|}\\
     \notag   &=&e^{-2d_\D(0,\phi_t(z))}\\
     \notag   &\ge&\frac{1-|z|}{1+|z|}e^{-2d_\D(z,\phi_t(z))}\\
     &=&\frac{1-|z|}{1+|z|}e^{-2d_\Omega(h(z),h(z)+t)},
\end{eqnarray}
for all $t\ge0$. Now, our intention is to estimate the quantity $d_\Omega(h(z),h(z)+t)$ and find an appropriate upper bound. By the triangle inequality of the hyperbolic distance, we have
\begin{equation}\label{LB2} 
d_\Omega(h(z),h(z)+t)\le d_\Omega(h(z),h(z)+T+1)+d_\Omega(h(z)+T+1,h(z)+t)
\end{equation}
for all $t>T+1$; see Figure \ref{fig:forward-lower}. Using Theorem \ref{DistanceLemma} and the distortion result from \cite[Corollary 1.4]{Pommerenke}, we have
\begin{equation}\label{LB3}
        d_\Omega(h(z),h(z)+T+1)\le\int\limits_{0}^{T+1}\frac{ds}{\delta_\Omega(h(z)+s)}\le\frac{T+1}{\delta_\Omega(h(z))}\le\frac{4(T+1)}{(1-|z|)|h'(z)|}.
\end{equation}
Next, let $y_T\in\R$ be the number such that the horizontal line $\{w\in\C:\text{Im}w=y_T\}$ is an axis of symmetry of $S_T$. Again, by the triangle inequality and the domain monotonicity property of the hyperbolic distance, we have
    \begin{align*}
       &d_{\Omega}(h(z)+T+1,h(z)+t)\le d_{S_T}(h(z)+T+1,\text{Re}h(z)+T+1+iy_T)+\\
       &+d_{S_T}(h(z)+t,\text{Re}h(z)+t+iy_T)+d_{S_T}(\text{Re}h(z)+T+1+iy_T,\text{Re}h(z)+t+iy_T).
    \end{align*}
    
\begin{figure}[h]
\centering
\begin{tikzpicture}[scale=1]
\fill [blue!10] (-3,-0.5) -- (-3,0.5) -- plot [smooth] coordinates {(-3,0.5) (0,2) (6,3)} -- (6,3) -- (6,-1) --plot [smooth] coordinates {(6,-1) (0,-0.85) (-3,-0.5)};
\draw [black,thick] plot [smooth] coordinates {(-3,0.5) (0,2) (6,3)};
\draw [black,thick] plot [smooth] coordinates {(-3,-0.5) (0,-0.85) (6,-1)};
\node[] at (5.8,2.7) {$\Omega$};
\node[] at (0.3,1.7) {$S_T$};
\fill[fill=black] (-2.5,0) circle (0.06); \node[] at (-2.5,-0.3) {$h(z)$};
\fill[fill=black] (0,0) circle (0.06); \node[] at (-0.8,0) {$h(z)+T$};
\fill[fill=black] (2,0) circle (0.06); \node[] at (2,-0.3) {$h(z)+T+1$};
\fill[fill=black] (5,0) circle (0.06); \node[] at (5,-0.3) {$h(z)+t$};
\fill[fill=black] (2,0.575) circle (0.06);
\fill[fill=black] (5,0.575) circle (0.06);
\draw [dashed] (0,-0.85) -- (0,2);
\draw [dashed] (0,-0.85) -- (6,-0.85);
\draw [dashed] (0,2) -- (6,2);
\draw [dash pattern={on 0.5pt off 2.5pt}] (0,0.575) -- (2,0.575); \draw [dash pattern={on 0.5pt off 2.5pt}] (5,0.575) -- (6,0.575);\node[] at (6.9,0.575) {$\mathrm{Im}z = y_T$};
\draw [dash pattern={on 1.5pt off 0.75pt}] (2,0) -- (2,0.575);
\draw [dash pattern={on 1.5pt off 0.75pt}] (5,0) -- (5,0.575);
\draw [dash pattern={on 1.5pt off 0.75pt}] (2,0.575) -- (5,0.575);
\draw [dash pattern={on 0.5pt off 2.5pt}] (0,0) -- (5,0);
\draw [-] (5,0)--(6,0); \node[] at (6.4,0) {$A_t$};
\end{tikzpicture}
\caption{Construction in the proof of Theorem \ref{thm:lower}.}
\label{fig:forward-lower}
\end{figure}
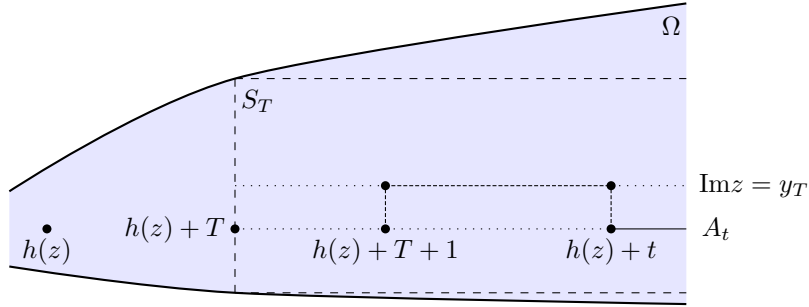

However, for any $s\in[T+1,t]$, it is easy to calculate through Theorem \ref{DistanceLemma} that
\begin{eqnarray}\label{eq:vertical distance}
 \notag   d_{S_T}(h(z)+s,\text{Re}h(z)+s+iy_T)&\le&\int\limits_{\text{Im}h(z)}^{y_T}\frac{|dx|}{\delta_{S_T}(h(z)+T+1)}\\
    &\le&\frac{\frac{\pi}{2(\lambda+\epsilon)}}{\delta_{S_T}(h(z)+T+1)}.
\end{eqnarray}
Clearly, the Euclidean distance of $h(z)+T+1$ from the vertical side of $S_T$ is $1$. Moreover, the method by which we defined $T$ combined with the convexity of $\Omega$ in the positive direction means that the Euclidean distance of $h(z)+T+1$ from the horizontal sides of $S_T$ is greater than or equal to $\delta_\Omega(h(z))$. Hence 
$$
\delta_{S_T}(h(z)+T+1)\ge\min\{1,\delta_\Omega(h(z))\}\ge\min\left\{1,\frac{1}{4}(1-|z|)|h'(z)|\right\}.$$
 So, substituting into (\ref{eq:vertical distance}), we get
\begin{equation}\label{eq:vertical 2}
d_{S_T}(h(z)+s,\text{Re}h(z)+s+iy_T)\le\frac{2\frac{\pi}{\lambda+\epsilon}}{\min\{4,(1-|z|)|h'(z)|\}},
\end{equation}
for all $s\in[T+1,t]$. Combining, we find
\begin{align}\label{LB4}
\notag d_\Omega(h(z)+T+1,h(z)+t)&\le\frac{4\frac{\pi}{\lambda+\epsilon}}{\min\{4,(1-|z|)|h'(z)|\}}+\\
&d_{S_T}(\text{Re}h(z)+T+1+iy_T,\text{Re}h(z)+t+iy_T),
\end{align}
for all $t>T+1$. Using relations (\ref{LB3}), (\ref{LB4}) and plugging them into (\ref{LB2}), we deduce
\begin{equation}\label{LB5}
        d_\Omega(h(z),h(z)+t)\le\frac{4(T+1+\frac{\pi}{\lambda+\epsilon})}{\min\{4,(1-|z|)|h'(z)|\}}+d_{S_T}(\text{Re}h(z)+T+1+iy_T,\text{Re}h(z)+t+iy_T).
\end{equation}
For the sake of brevity set $c_{z,\epsilon}=\frac{4(T+1+\frac{\pi}{\lambda+\epsilon})}{\min\{4,(1-|z|)|h'(z)|\}}$. Finally, it is easy to verify that the conformal mapping 
\begin{eqnarray*}
	z\mapsto -i\sin\left(i\frac{\pi}{d_T}(z-iy_T-\text{Re}h(z)-T)\right)\\=-i\sin\left(i(\lambda+\epsilon)(z-iy_T-\text{Re}h(z)-T)\right)
	\end{eqnarray*}
maps the horizontal half-strip $S_T$ conformally onto the right half-plane $\C_0$. Therefore, by the conformal invariance of the hyperbolic distance and its known formula for the right half-plane, we may compute that
\begin{align}\label{LB6}
    \notag   d_{S_T}(\text{Re}h(z)+T+1+iy_T&,\text{Re}h(z)+t+iy_T)=\\
    \notag &=d_{\C_0}(-i\sin(i(\lambda+\epsilon)),-i\sin(i(\lambda+\epsilon)(t-T)))\\
    \notag &=d_{\C_0}(\sinh(\lambda+\epsilon),\sinh((\lambda+\epsilon)(t-T)))\\
    \notag &=\frac{1}{2}\log\frac{\sinh((\lambda+\epsilon)(t-T))}{\sinh(\lambda+\epsilon)}\\
     &\le\frac{1}{2}\log e^{(\lambda+\epsilon)(t-T)}-\frac{1}{2}\log(2\sinh(\lambda+\epsilon)).
\end{align}

Therefore, substituting relations (\ref{LB5}) and (\ref{LB6}) into inequality (\ref{LB1}), we find

\begin{eqnarray*}
        |\gamma_z(t)-\tau|&\ge&2\frac{1-|z|}{1+|z|}e^{-2c_{z,\epsilon}} \sinh(\lambda) e^{T(\lambda+\epsilon)} e^{-(\lambda+\epsilon)t}\\
        &\ge&(1-|z|)\sinh(\lambda) e^{-2c_{z,\epsilon}} e^{-(\lambda+\epsilon)t},
\end{eqnarray*}
for all $t>T+1$. To end the proof, let $t\in[0,T+1]$. Then, working as above, we get
\begin{equation}\label{eq:almost done 4}
|\gamma_z(t)-\tau|\ge\frac{1-|z|}{1+|z|}e^{-2d_\Omega(h(z),h(z)+t)},
\end{equation}
where as before, by Theorem \ref{DistanceLemma},
\begin{equation}\label{eq:almost done 5}
d_\Omega(h(z),h(z)+t)\le\int\limits_{0}^{t}\frac{ds}{\delta_\Omega(h(z)+s)}\le\frac{t}{\delta_\Omega(h(z))}.
\end{equation}
As a consequence, by (\ref{eq:almost done 4}), (\ref{eq:almost done 5}) and using \cite[Corollary 1.4]{Pommerenke}, we get
\begin{eqnarray*}
        |\gamma_z(t)-\tau|&\ge&\frac{1-|z|}{1+|z|}e^{-\frac{2t}{\delta_\Omega(h(z))}}\\
        &\ge&\frac{1-|z|}{1+|z|}\exp{\frac{-8(T+1)}{(1-|z|)|h'(z)|}}e^{-(\lambda+\epsilon)t}\\
        &\ge&\frac{1-|z|}{1+|z|}e^{-2c_{z,\epsilon}}e^{-(\lambda+\epsilon)t},
\end{eqnarray*}
for all $t\in[0,T+1]$. Setting
$$c=c(z,\epsilon)=(1-|z|)\exp{\frac{-8(T+1+\frac{\pi}{\lambda+\epsilon})}{\max\{4,(1-|z|)|h'(z)|\}}}\min\left\{\sinh\lambda,\frac{1}{1+|z|}\right\},$$
we have the desired result for all $t\ge0$.

\end{proof}

\begin{remark}\label{remark on groups, forward}
    The proofs of Theorems \ref{thm:forward} and \ref{thm:lower} are valid even if $(\phi_t)$ is a group. However, they can be substantially simplified. In addition, by \cite[Theorem 8.2.6]{BCDM} we know that groups adhere to explicit formulas. As a result, quick calculations prove that the limit $\lim_{t\to+\infty}(e^{\lambda t}|\phi_t(z)-\tau|)$ exists for a hyperbolic group of spectral value $\lambda>0$, while the limit $\lim_{t\to+\infty}(t|\phi_t(z)-\tau|)$ exists for a parabolic group. Hence, the rates may be extracted swiftly.
\end{remark}

Now that we have established the lower bounds, we will provide an example demonstrating their sharpness for parabolic semigroups. The inclusion of $\epsilon$ in the rate does not allow us to formally speak of sharpness. So, we will show that any rate faster than $e^{-\epsilon t}$, for all $\epsilon>0$, cannot be a unified lower bound. 

\begin{example} 
\label{ex:forward-orbits-lower-bound-sharpness}
Let $f \colon [0,+\infty) \to \R$ be a smooth, increasing, and concave function such that
$$\lim_{t \to +\infty}f(t) = +\infty, \qquad \lim_{t \to +\infty}\dfrac{f(t)}{t} = 0.$$
Consider the positive function $\theta \colon (0,+\infty) \to \R$ given by $\theta = \pi/f'$. Since $f$ is concave, $\theta$ is also increasing. Therefore,
$$\Omega := \{x+iy \in \C : x > 0, 0 < y < \theta(x)\}$$
is a simply connected and convex in the positive direction domain. Therefore, through a Riemann mapping $h \colon \D \to \Omega$, we may construct a non-elliptic semigroup $(\phi_t)$ given by
$$\phi_t(z) = h^{-1}(h(z)+t), \qquad t \ge 0, \, z \in \D.$$
Since $f'$ is decreasing and $f(t) \to +\infty$, and $f(t)/t \to 0$, as $t \to +\infty$, it follows that $f'(t) \to 0$, as $t \to +\infty$. Then $\theta(t) \to +\infty$, as $t \to +\infty$. This means that $\Omega$ is not contained in a horizontal strip, and so $(\phi_t)$ is a parabolic semigroup of positive hyperbolic step (notice that $\Omega$ is contained in the upper half-plane). In particular, by the shape of $\Omega$, it is clear that there is only one prime end corresponding to $\infty$ and hence, using Carath\'eodory's Theorem \cite[Theorem 4.3.1]{BCDM}, the Denjoy-Wolff point of $(\phi_t)$ is the unique point $\tau \in \partial \D$ such that $h(\tau) = \infty$.

Let us now estimate the rate of convergence of $(\phi_t)$ to its Denjoy-Wolff point. Using harmonic measure, as we have done several times, we know that
\begin{equation}\label{eq:example1}
\abs{\phi_t(z)-\tau} \leq 2\pi\omega(h(0),A_t,\Omega \setminus A_t),
\end{equation}
where $A_t = \{h(z) + s: s \in [t,+\infty)\}$. Pick $t > 0$ such that $\textup{Re}h(z)+t > \textup{Re}h(0)$. By the maximum principle for harmonic functions we see that
\begin{equation}\label{eq:example2}
\omega(h(0),A_t,\Omega \setminus A_t) \leq \omega(h(0),L_t,\Omega_t),
\end{equation}
where
$$L_t := \{\textup{Re}h(z)+t+iy : 0 < y < \theta(\textup{Re}h(z)+t)\},$$
and
$$\Omega_t := \{x+iy \in \C : 0 < x < \textup{Re}h(z)+t, \, 0 < y < \theta(x)\}.$$
Now, let $\gamma$ be the curve traced by the horizontal line joining $h(0)$ and $\partial\Omega$. Using the serial rule for extremal distance (see Lemma \ref{lemma:serial-rule} and (\ref{eq:serial})), we obtain
\begin{equation}\label{eq:example3}
\lambda_{\Omega_t \setminus \gamma}(\gamma,L_t) \geq \lambda_{\Omega^*_t}(C,L_t),
\end{equation}
where
$$C := \{\textup{Re}h(0)+iy: 0 < y < \theta(\textup{Re}h(0))\},$$
and
$$\Omega^*_t := \{x+iy \in \C : \textup{Re}h(0) < x < \textup{Re}h(z)+t, \, 0 < y < \theta(x)\}.$$
Moreover, using \cite[Chapter IV, Eq. (6.2)]{GM} we get that
$$\lambda_{\Omega^*_t}(C,L_t) \geq \int\limits_{\textup{Re}h(0)}^{\textup{Re}h(z)+t}\dfrac{ds}{\theta(s)} = \dfrac{1}{\pi}(f(\textup{Re}h(z)+t)-f(\textup{Re}h(0))) \geq \dfrac{1}{\pi}(f(t)-f(\textup{Re}h(0))).$$
Finally, using Theorem \ref{thm:Beurling} and combining (\ref{eq:example1}), (\ref{eq:example2}), and (\ref{eq:example3}), we deduce
\begin{align*}
\abs{\phi_t(z)-\tau} & \leq 2\pi\omega(h(0),L_t,\Omega_t) \leq 16\exp(-\pi\lambda_{\Omega_t\setminus\gamma}(\gamma,L_t)) \\
& \leq 16\exp(-\pi\lambda_{\Omega^*_t}(C,L_t)) \leq 16\exp(f(\textup{Re}h(0)))\exp(-f(t)),
\end{align*}
for all those $t$ such that $\textup{Re}h(z)+t>\textup{Re}h(0)$. Since $\abs{\phi_t(z)-\tau}$ and $\exp(-f(t))$ are bounded, this means that there exists $C = C(z)$ such that
$$\abs{\phi_t(z)-\tau} \leq C\exp(-f(t)), \qquad\text{for all } t > 0.$$
Hence the desired sharpness follows. Clearly, these arguments can also be adapted for the case of zero hyperbolic step by defining the domain
$$\Omega := \{x+iy \in \C : x > 0, -\theta(x)/2 < y < \theta(x)/2\}.$$
    
\end{example}

We end the section with two remarks about the rate of convergence to the Denjoy-Wolff point concerning certain more exclusive, but still important, types of non-elliptic semigroups.

\begin{remark}\label{rmk:finite shift}
   Adding to the classification of semigroups we have already explained, non-elliptic semigroups may also be separated into two essential categories. Let $(\phi_t)$ be a non-elliptic semigroup in $\D$ with Denjoy-Wolff point $\tau\in\partial\D$. We say that $(\phi_t)$ is of \textit{finite shift} if for each $z\in\D$, there exists some \textit{horodisk} $E_z$ of $\D$ centered at $\tau$ (i.e. $E_z$ is a Euclidean disk that is internally tangent to $\D$ at the point $\tau$) such that $\phi_t(z)\notin E_z$, for all $t\ge0$. Otherwise, $(\phi_t)$ is said to be of \textit{infinite shift} and every forward orbit intersects all horodisks centered at $\tau$. It can be proved (see e.g. \cite[p.511]{BCDM}) that semigroups of finite shift are necessarily parabolic of positive hyperbolic step. Therefore, if $(\phi_t)$ is a semigroup of finite shift, its rate of convergence is bounded above from $c_1/t$, where $c_1$ is the constant in Theorem \ref{thm:forward}(b). However, in \cite[Theorem 1.1(ii)]{KTZ} the authors prove that the rate in the case of finite shift is also bounded below by $c_2/t$, where $c_2$ is a positive constant that depends on the starting point. Hence, we understand that semigroups of finite shift have a rate that basically behaves like $1/t$, as $t\to+\infty$. In particular, in \cite[Theorem 4.2]{Fran} the exact value of $\lim_{t\to+\infty}(t|\phi_t(z)-\tau|)$ is computed and is shown to be independent of $z$.
\end{remark}

\begin{remark}\label{rmk:conformality Denjoy}
    In addition, hyperbolic semigroups can also be separated in two categories. Let $(\phi_t)$ be a hyperbolic semigroup in $\D$ with Denjoy-Wolff point $\tau\in\partial\D$, spectral value $\lambda>0$ and Koenigs function $h$. Set $g:=-i\exp(-\lambda h)$. Then, the separation into the two categories results from the conformality or not of $g$ at $\tau$ (see \cite[Section 4]{BCDM-Rate}). This differentiation yields an important distinction with regard to the rates of convergence. More specifically, if $g$ is indeed conformal at $\tau$, then the $\epsilon$ in the lower bound from Theorem \ref{thm:lower} may be eliminated, rendering the rate of convergence essentially equal to $e^{-\lambda t}$ because of the upper bound from Theorem \ref{thm:forward}(c). On the contrary, if $g$ is not conformal at $\tau$, then the $\epsilon$ cannot be removed and Theorem \ref{thm:lower} provides the best possible lower bound (cf. \cite[Theorem 4.2]{BCDM-Rate}). This fact also yields the sharpness of the rate in Theorem \ref{thm:lower} for hyperbolic semigroups.
\end{remark}

\section{Backward Orbits in Non-elliptic Semigroups}

Having concluded our study on the rates of the forward orbits in non-elliptic semigroups, we are now ready to proceed to the focal point of this work that concerns backward orbits and their rates of convergence. In contrast to the forward case, certain intricacies appear when working in backward dynamics. A first difficulty has to do with the fact that not all backward orbits converge to the same point. Hence, we have to study the quantity $|\widetilde{\gamma}_z(t)-\sigma|$, where $\sigma$ depends on the starting point $z$. Moreover, given $z\in\D$, the backward orbit $t \mapsto \widetilde{\gamma}_z(t)$ might not be defined for all $t \geq 0$. Furthermore, whenever $z\in\partial\D$, its backward orbit is not necessarily well-defined. For all these reasons, as usual, we will solely deal with backward orbits emanating from points $z\in\D$ satisfying $T_z=+\infty$.

\subsection{Regular Backward Orbits}
We start our study with the regular backward orbits of non-elliptic semigroups. In other words, with all those backward orbits which are contained inside the petals of the semigroup. In the following proof, we will start our research from parabolic petals which can only appear in parabolic semigroups. In most instances, the behavior of a semigroup concerning its backward orbits depends only on the type of its petals and not the type of the semigroup itself. Hence, one would expect backward orbits contained inside a parabolic petal to present a unified rate, independent of the type of $(\phi_t)$ (i.e. independent of the hyperbolic step of $(\phi_t)$). Nevertheless, this is not the case. We will see that the rates of parabolic semigroups of zero and positive hyperbolic step are indeed different. This difference will be further demonstrated later on by means of two examples of sharpness. During the course of the next proof, we uphold the normalizations and conventions made in Section 2.

\begin{proof}[Proof of Theorem \ref{thm:backward}] (a) First, suppose that $(\phi_t)$ is parabolic of zero hyperbolic step and that $\Delta$ is a parabolic petal of $(\phi_t)$ with $z\in\Delta$. Then $\widetilde{\gamma}_z(t)$ is defined for all $t\ge0$ and in particular $\lim_{t\to+\infty}\widetilde{\gamma}_z(t)=\tau$. We start with the upper bound. Let $t>2(|h(z)|+\delta_\Omega(0))$, where $\Omega:=h(\D)$ is the Koenigs domain of $(\phi_t)$. Set $\beta_t=\{\widetilde{\gamma}_z(s):s\in[t,+\infty)\}$ and $B_t=h(\beta_t)=\{h(z)-s:s\in[t,+\infty)\}$. Recall that $h(0)=0$. Passing through the diameter estimate we used before, we can see that
$$|\widetilde{\gamma}_z(t)-\tau|\le2\pi\omega(0,\beta_t,\D\setminus\beta_t)=2\pi\omega(0,B_t,\Omega\setminus B_t).$$
Following similar steps as in the proof of Theorem \ref{thm:forward}(a), we find $q\in\partial\Omega$ such that $|q|=|q-0|=\delta_\Omega(0)$ and prove that
\begin{equation}\label{hyper-dist-back}
   d_{\C\setminus B_t}(0,q)\le\log\frac{|h(z)-t|}{|h(z)-t|-\delta_\Omega(0)}. 
\end{equation}
Then, denoting $L_q:=\{q-s:s\in[0,+\infty)\}$ and using a suitable conformal mapping onto the unit disk, we may show that
\begin{equation}\label{eq:double slit}
\omega(0,B_t,\C\setminus(B_t\cup L_q))\le1-\frac{2}{\pi}\arcsin\frac{1-r}{1+r},
\end{equation}
for some $r\in(0,1)$ satisfying $d_{\C\setminus B_t}(0,q)=d_\D(0,r)$. For this $r$, via relation (\ref{hyper-dist-back}), we get
\begin{equation}\label{eq:log}
\frac{1}{2}\log\frac{1+r}{1-r}\le\log\frac{|h(z)-t|}{|h(z)-t|-\delta_\Omega(0)}.
\end{equation}
Combining relations (\ref{eq:double slit}), (\ref{eq:log}) with \cite[Corollary 1.4]{Pommerenke} and executing identical calculations as in the proof of Theorem \ref{thm:forward}(a), we find
$$|\widetilde{\gamma}_z(t)-\tau|\le4\sqrt{2}\pi\sqrt{|h'(0)|}\cdot\frac{1}{\sqrt{t}}, \quad\text{for all }t>2(|h(z)|+\delta_\Omega(0)).$$
Finally, for $t\in(0,2(|h(z)|+\delta_\Omega(0))]$, it is elementary to see that
$$|\widetilde{\gamma}_z(t)-\tau|\le2=\frac{2\sqrt{t}}{\sqrt{t}}\le4\sqrt{2}\pi\left(\sqrt{|h(z)|}+\sqrt{|h'(0)|}\right)\cdot\frac{1}{\sqrt{t}}.$$
Hence, we have the desired result for all $t>0$ and all $z\in\Delta$.

We proceed to the lower bound. First of all, notice that by Julia's Lemma, the backward orbit $\widetilde{\gamma}_z$ stays out of a horodisk, which is determined by the initial point $z$. This means that
\begin{equation}
\abs{\widetilde{\gamma}_z(t) - \tau}^2 \geq \dfrac{\abs{z-\tau}^2}{1-\abs{z}^2}(1-\abs{\widetilde{\gamma}_z(t)}^2) \geq  \dfrac{\abs{z-\tau}^2}{1-\abs{z}^2}(1-\abs{\widetilde{\gamma}_z(t)}),
\end{equation}
which yields
\begin{equation}\label{parabolic-lower}
    \abs{\widetilde{\gamma}_z(t)-\tau} \geq \dfrac{\abs{z-\tau}}{\sqrt{1-\abs{z}^2}}\sqrt{1-\abs{\widetilde{\gamma}_z(t)}}.
\end{equation}
The latter can be estimated through the use of hyperbolic distance:
\begin{align}\label{eq:1-||}
\notag 1-\abs{\widetilde{\gamma}_z(t)} & = \exp(-2d_{\D}(0,\widetilde{\gamma}_z(t)))(1+\abs{\widetilde{\gamma}_z(t)}) \\
\notag& \geq \exp(-2d_{\D}(0,z))\exp(-2d_{\D}(z,\widetilde{\gamma}_z(t))) \\
& = \dfrac{1-\abs{z}}{1+\abs{z}}\exp(-2d_{\D}(z,\widetilde{\gamma}_z(t))),
\end{align}
where we have used the triangle inequality for $d_{\D}$. Plugging (\ref{eq:1-||}) into (\ref{parabolic-lower}), we are led to
\begin{equation}\label{parabolic-lower2}
    |\widetilde{\gamma}_z(t)-\tau|\ge\frac{|z-\tau|}{1+|z|}e^{-d_\D(z,\widetilde{\gamma}_z(t))}.
\end{equation}
We now try to estimate the latter hyperbolic distance using 
its conformal invariance. Without loss of generality, we may assume that $h(\Delta) = \{w\in\C:\text{Im}w>\alpha\} \subset \Omega$, for some $\alpha\in\R$. In case $h(\Delta)$ is a lower half-plane, the proof follows in similar fashion. Using conformal mappings, we may write
\begin{align}\label{back, lower, hyper}
\notag d_{\D}(z,\widetilde{\gamma}_z(t)) & = d_{\Omega}(h(z),h(z)-t) \leq d_{h(\Delta)}(h(z),h(z)-t) \\
\notag & = d_{\H}(h(z)-i\alpha,h(z)-i\alpha-t) = \frac{1}{2}\log\frac{|t+2i(\mathrm{Im}h(z)-\alpha)|+t}{|t+2i(\mathrm{Im}h(z)-\alpha)|-t} \\
&=\frac{1}{2}\log\frac{2(\mathrm{Im}h(z)-\alpha)^2+t^2+t\sqrt{4(\mathrm{Im}h(z)-\alpha)^2+t^2}}{2(\mathrm{Im}h(z)-\alpha)^2}.
\end{align}
Now let $t>|\mathrm{Im}h(z)-\alpha|$. Continuing with the previous calculations, at once, we get 
$$d_{\D}(z,\widetilde{\gamma}_z(t))\le\frac{1}{2}\log\frac{(3+\sqrt{5})t^2}{2(\mathrm{Im}h(z)-\alpha)^2}\le \frac{1}{2}\log\frac{4t^2}{(\mathrm{Im}h(z)-\alpha)^2}=\log\frac{2t}{|\mathrm{Im}h(z)-\alpha|}.$$
On the other hand, for $t\in(1,|\textup{Im}h(z)-\alpha|]$ (whenever this modulus is indeed greater than $1$), relation (\ref{back, lower, hyper}) yields
$$d_\D(z,\widetilde{\gamma}_z(t))\le\frac{1}{2}\log\frac{(3+\sqrt{5})(\textup{Im}h(z)-\alpha)^2}{2(\textup{Im}h(z)-\alpha)^2}\le\log2\le \log(2t).$$
Combining, we find $d_\D(z,\widetilde{\gamma}_z(t))\le\log\frac{2t}{\min\{1,|\textup{Im}h(z)-\alpha|\}}$, for all $t>1$. 
Thus, substituting in (\ref{parabolic-lower2}), the desired result follows.

(b) Next, suppose that $(\phi_t)$ is parabolic of positive hyperbolic step with Koenigs domain $\Omega$. This proof is the backward analogue of the proof of Theorem \ref{thm:forward}(b). Let $\Delta$ be a parabolic petal of $(\phi_t)$, $z\in\Delta$ and $t>2|h(z)|$. We maintain the notation $\beta_t$ and $B_t$ from the previous case. Recall that $\text{Re}h(0)=0$. Without loss of generality, we may assume that $\H$ is the smallest horizontal half-plane containing $\Omega$. Using consecutively the usual diameter estimate, the conformal invariance and the domain monotonicity property of harmonic measure, we find
$$|\widetilde{\gamma}_z(t)-\tau|\le2\pi\omega(0,\beta_t,\D\setminus\beta_t)=2\pi\omega(h(0),B_t,\Omega\setminus B_t)\le2\pi\omega(h(0),B_t,\H\setminus B_t).$$
Consider $B^*_t$ to be the projection of $B_t$ onto the real line $\R=\partial\H$. In other words, $B^*_t=\{\text{Re}h(z)-s:s\in[t,+\infty)\}$. Using the Strong Markov Property exactly as in the proof of Theorem \ref{thm:forward}(b), we have
\begin{equation}\label{eq:markov2}
\omega(h(0),B_t,\H\setminus B_t)\le2\omega(h(0),B^*_t,\H).
\end{equation}
Therefore, through (\ref{eq:markov2}) we deduce
\begin{equation}\label{rate-phs-petal}
    |\widetilde{\gamma}_z(t)-\tau|\le4\pi\omega(h(0),B^*_t,\H).
\end{equation}
However, the latter harmonic measure can be easily estimated. As a matter of fact,
\begin{eqnarray}\label{eq:harmonic measure half-plane}
\notag    \omega(h(0),B^*_t,\H)&=&\omega(h(0)-\text{Re}h(z)+t,(-\infty,0],\H)\\
\notag    &=&\frac{\arg(h(0)-\text{Re}h(z)+t)}{\pi}\\
\notag    &=&\frac{1}{\pi}\arctan\frac{\text{Im}h(0)}{t-\text{Re}h(z)}\\
    &\le&\frac{2|h(0)|}{\pi t},
\end{eqnarray}
where the last inequality is implied through $t>2|h(z)|$. Plugging (\ref{eq:harmonic measure half-plane}) into (\ref{rate-phs-petal}), we arrive to
$$|\widetilde{\gamma}_z(t)-\tau|\le\frac{8|h(0)|}{t}, \quad\text{for all }t>2|h(z)|.$$
To end the proof, for $t\in(0,2|h(z)|]$, we get
$$|\widetilde{\gamma}_z(t)-\tau|\le2=\frac{2t}{t}\le\frac{4|h(z)|}{t}.$$
Combining the two results, we obtain the desired bound for all $t>0$ and all $z\in\Delta$.

For the lower bound, we note that the proof of the respective lower bound in the previous case works in this case, as well. Indeed, that proof only takes under consideration the existence of a parabolic petal and not the type of the semigroup.

(c) Finally, suppose that $\Delta$ is a hyperbolic petal. Let $\Omega$ be the Koenigs domain of the semigroup. By the diameter estimate for the harmonic measure, we can see that
\begin{equation}\label{eq:last diameter estimate}
|\widetilde{\gamma}_z(t)-\sigma|\le 2\pi\omega(h(0),B_t,\Omega\setminus B_t),
\end{equation}
where $B_t=\{h(z)-s:s\in[t,+\infty)\}$. Since $\Delta$ is a hyperbolic petal, we know that $h(\Delta)$ is a maximal horizontal strip contained in $\Omega$ whose width is equal to $-\pi/\nu$. Let $\epsilon\in(0,-\nu)$. For the initially fixed $z\in\D$, the fact that it is contained in a petal, dictates that there exists no point $\zeta\in\partial\Omega$ satisfying $\text{Im}\zeta=\text{Im}h(z)$. Therefore, we are able to separate $\partial\Omega$ into two disjoint, connected (in the sense of prime ends) boundary components $\partial\Omega^+$ and $\partial\Omega^-$, with $\partial\Omega^+=\{\zeta\in\partial\Omega:\text{Im}\zeta>\text{Im}h(z)\}$ and $\partial\Omega^-=\{\zeta\in\partial\Omega:\text{Im}\zeta<\text{Im}h(z)\}$. Clearly, $\partial\Omega^+\cup\partial\Omega^-=\partial\Omega$ and $\partial\Omega^+\cap\partial\Omega^-=\emptyset$. For the initially fixed $z\in\D$ and for $t\ge0$, we denote by $L_t$ the vertical line containing $h(z)-t$, i.e.
$$L_t=\{w\in\C:\text{Re}w=\text{Re}h(z)-t\}.$$
As we already mentioned, due to the fact that $z$ is contained inside a hyperbolic petal $\Delta$ and that $h(\Delta)$ is a maximal horizontal strip in $\Omega$, it is necessary that there exists a smallest $T_z\ge0$ (which depends in an essential way on the geometry of $\Omega$ and the position of $z$) such that both the sets $L_t\cap\partial\Omega^+$ and $L_t\cap\partial\Omega^-$ are non-empty, for all $t>T_z$. The number $T_z$ itself is either an infimum or a minimum, but this does not change anything. For all those $t>T_z$, we may find the point $p_t^+\in L_t\cap\partial\Omega^+$ with the smallest imaginary part and the point $p_t^-\in L_t\cap\partial\Omega^-$ with the largest imaginary part; see Figure \ref{fig:backward-upper-hyperbolic}(A). Set $d_t=|p_t^+-p_t^-|=\text{Im}p_t^+-\text{Im}p_t^-$. Because of the convexity in the positive direction of $\Omega$, it is clear that $d_t$ is a decreasing function of $t>T_z$. In fact, $d_t\to-\pi/\nu$, as $t\to+\infty$. For the $\epsilon>0$ we chose, there exists $t_{z,\epsilon}>T_z$ such that $d_{t_{z,\epsilon}} \leq -\pi/(\nu+\epsilon)$ (for this to be true, $\epsilon$ has to be sufficiently small). We also potentially shrink $\epsilon$ so that $t_{z,\epsilon}>|h(z)-h(0)|$. Consider the horizontal half-lines
$$Q_t^+=\{p_t^+-s:s\ge0\} \quad\text{and}\quad Q_t^-=\{p_t^--s:s\ge0\}.$$
\begin{figure}[h]
\centering
\begin{subfigure}{0.49\textwidth}
\begin{tikzpicture}[scale=0.9]
\fill [blue!10] (-2,-0.5) -- (-2,0.5) -- plot [smooth] coordinates {(-2,0.5) (0,1) (1,1.5) (2,2.5) (3,3)} -- (3,3) -- (3,-2) --plot [smooth] coordinates {(3,-2) (1,-1.5) (0,-1) (-2,-0.5)};
\node[] at (2.7,2.5) {$\Omega$};
\draw [black,thick] plot [smooth] coordinates {(-2,0.5) (0,1) (1,1.5) (2,2.5) (3,3)};
\draw [black,thick] plot [smooth] coordinates {(-2,-0.5) (0,-1) (1,-1.5) (3,-2)};
\draw [dotted] (1,1.5)--(1,-1.5); 
\fill[fill=black] (1,0.2) circle (0.06); \node[] at (0,0.2) {$h(z)-t_{z,\epsilon}$};
\fill[fill=black] (1,1.5) circle (0.06); \node[] at (0.6,1.8) {$p_{t_{z,\epsilon}}^+$};
\fill[fill=black] (1,-1.5) circle (0.06); \node[] at (0.6,-1.8) {$p_{t_{z,\epsilon}}^-$};
\fill[fill=black] (2.9,0.2) circle (0.06); \node[] at (2.4,0.2) {$h(z)$};
\fill[fill=black] (2,-1) circle (0.06); \node[] at (2.5,-1) {$h(0)$};
\draw [dotted] (1,1.5) -- (-2,1.5); 
\draw [dotted] (1,-1.5) -- (-2,-1.5); 
\end{tikzpicture}
\caption{}
\end{subfigure}
\begin{subfigure}{0.49\textwidth}
\begin{tikzpicture}[scale=0.9]
\draw [fill=blue!10,blue!10] (-5,3) rectangle (1.5,1.5);
\draw [fill=blue!10,blue!10] (-5,-3) rectangle (1.5,-1.5);
\draw [fill=blue!10,blue!10] (-4,-1.5) rectangle (1.5,1.5);
\draw [dotted] (0,1.5)--(0,-1.5); 
\draw [-] (0,1.5)--(-5,1.5); \node[] at (-2,1.75) {$Q_{t_{z,\epsilon}}^+$};
\draw [-] (0,-1.5)--(-5,-1.5); \node[] at (-2,-1.75) {$Q_{t_{z,\epsilon}}^-$};
\draw [-] (-4,1.5)--(-4,-1.5); \node[] at (-4.5,0) {$D_t$};
\fill[fill=black] (0,0.5) circle (0.06); \node[] at (-1,0.5) {$h(z)-t_{z,\epsilon}$};
\fill[fill=black] (-4,0.5) circle (0.06); \node[] at (-3,0.5) {$h(z)-t$};
\fill[fill=black] (0,1.5) circle (0.06); \node[] at (0.5,1.5) {$p_{t_{z,\epsilon}}^+$};
\fill[fill=black] (0,-1.5) circle (0.06); \node[] at (0.5,-1.5) {$p_{t_{z,\epsilon}}^-$};
\fill[fill=black] (1,-2.5) circle (0.06); \node[] at (0.5,-2.5) {$h(0)$};
\node[] at (1.25,2.75) {$A_\epsilon$};
\end{tikzpicture}
\caption{}
\end{subfigure}
\caption{Construction in the proof of Theorem \ref{thm:backward}(c).}
\label{fig:backward-upper-hyperbolic}
\end{figure}
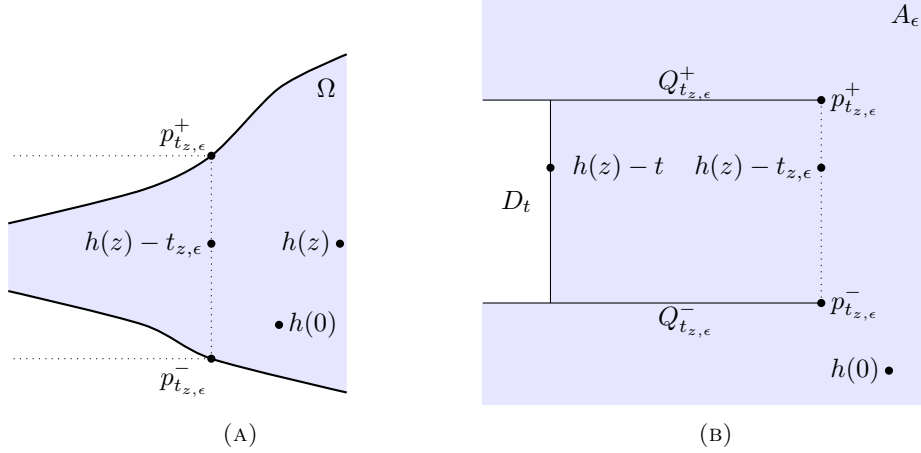
Evidently, $\Omega\subset\C\setminus(Q_t^+\cup Q_t^-)=:\Omega_t$. Therefore, 
$$
\omega(h(0),B_t,\Omega\setminus B_t)\le\omega(h(0),B_t,\Omega_t\setminus B_t),
$$
 for all $t>T_z$, by the domain monotonicity property of the harmonic measure. Of course, the same inequality holds for $t=t_{z,\epsilon}$. Fix $t>t_{z,\epsilon}$. In addition, consider the vertical rectilinear segments $D_t=\{\text{Re}h(z)-t+is:s\in[\text{Im}p_{t_{z,\epsilon}}^-,\text{Im}p_{t_{z,\epsilon}}^+]\}$. It is easy to see that $D_t$ joins the half-lines $Q_{t_{z,\epsilon}}^+$ and $Q_{t_{z,\epsilon}}^-$. By the maximum principle for harmonic functions, we have
\begin{equation}\label{eq:maximum principle}
\omega(h(0),B_t,\Omega\setminus B_t)\le\omega(h(0),B_t,\Omega_{t_{z,\epsilon}}\setminus B_t)\le\omega(h(0),D_t,A_\epsilon),
\end{equation}
where $A_\epsilon$ is the simply connected domain bounded by $Q_{t_{z,\epsilon}}^+$, $Q_{t_{z,\epsilon}}^-$, and $D_t$, but not the horizontal half-strip; see Figure \ref{fig:backward-upper-hyperbolic}(B). The last harmonic measure is well-defined, because the condition $t>t_{z,\epsilon}>|h(z)-h(0)|$ dictates that $h(0)\in A_\epsilon$. 
Certainly, the horizontal half-line $\Gamma:=\{h(0)+s:s\ge0\}$ joins $h(0)$ to $\partial A_\epsilon\setminus D_t$. Therefore, by Beurling's Estimate, we get
\begin{equation}\label{eq:last beurling}
\omega(h(0),D_t,A_\epsilon)\le\frac{8}{\pi}e^{-\pi\lambda_{A_\epsilon\setminus\Gamma}(D_t,\Gamma)}.
\end{equation}
However, by the serial rule for the extremal distance, we obtain
\begin{equation}\label{eq:last serial}
\lambda_{A_\epsilon\setminus\Gamma}(D_t,\Gamma)\ge\lambda_{R_\epsilon}(D_t,D_{t_{z,\epsilon}})=\frac{\text{Re}h(z)-t_{z,\epsilon}-(\text{Re}h(z)-t)}{d_{t_{z,\epsilon}}},
\end{equation}
where $R_\epsilon$ is the rectangle bounded by $D_t,D_{t_{z,\epsilon}},Q_{t_{z,\epsilon}}^+$, and $Q_{t_{z,\epsilon}}^-$.
Combining (\ref{eq:last diameter estimate}), (\ref{eq:maximum principle}), (\ref{eq:last beurling}) and (\ref{eq:last serial}) we have
$$|\widetilde{\gamma}_z(t)-\sigma|\le 2\pi\frac{8}{\pi}e^{-\pi\frac{t-t_{z,\epsilon}}{d_{t_{z,\epsilon}}}} \leq 16e^{-(\nu+\epsilon)t_{z,\epsilon}}e^{(\nu+\epsilon)t},$$
for all $t>t_{z,\epsilon}$. Finally, for $t\le t_{z,\epsilon}$, we have
$$|\widetilde{\gamma}_z(t)-\sigma|\le2=2e^{-(\nu+\epsilon)t}e^{(\nu+\epsilon)t}\le2e^{-(\nu+\epsilon)t_{z,\epsilon}}e^{(\nu+\epsilon)t}.$$
Setting $T=t_{z,\epsilon}$, we have the desired result for all $t>0$.

Finally, we are ready to work on the lower bound. Using consecutively the triangle inequality for the Euclidean distance, the formula of the hyperbolic distance in the unit disk, the triangle inequality for the hyperbolic distance, the conformal invariance of the hyperbolic distance and the domain monotonicity property of the hyperbolic distance, we get
\begin{eqnarray}\label{hyperbolic-lower}
    \notag    |\widetilde{\gamma}_z(t)-\sigma|&\ge&1-|\widetilde{\gamma}_z(t)|\\
    \notag    &\ge&\frac{1-|\widetilde{\gamma}_z(t)|}{1+|\widetilde{\gamma}_z(t)|}\\
    \notag    &=&e^{-2d_\D(0,\widetilde{\gamma}_z(t))}\\
    \notag    &\ge&e^{-2d_\D(0,z)}e^{-2d_\D(z,\widetilde{\gamma}_z(t))}\\
    \notag    &=&\frac{1-|z|}{1+|z|}e^{-2d_\Omega(h(z),h(z)-t)}\\
        &\ge&\frac{1-|z|}{1+|z|}e^{-2d_{h(\Delta)}(h(z),h(z)-t)},
\end{eqnarray}
since $z\in\Delta$ and $h(\Delta)\subset\Omega$. Now, we have to find an upper bound for the quantity $d_{h(\Delta)}(h(z),h(z)-t)$. Since $h(\Delta)$ is a maximal horizontal strip of width $-\pi/\nu$ inside $\Omega$, there exists some $\alpha\in\R$ so that $h(\Delta)=\{w\in\C:\alpha<\text{Im}w<\alpha-\pi/\nu\}$. It is easy to verify that the conformal mapping $z\mapsto e^{-\nu(z-i\alpha+i\frac{\pi}{2\nu})}$ maps $h(\Delta)$ conformally onto the right half-plane $\C_0$. Through the conformal invariance of the hyperbolic distance and the formula
\begin{equation}\label{eq:hyperbolic right half-plane}
d_{\C_0}(w_1,w_2)=\frac{1}{2}\log\frac{|w_1+\bar{w_2}|+|w_1-w_2|}{|w_1+\bar{w_2}|-|w_1-w_2|}, \quad\text{for all }w_1,w_2\in\C_0,
\end{equation}
we are led to
\begin{eqnarray*}
        &&d_{h(\Delta)}(h(z),h(z)-t)=d_{\C_0}(-ie^{-\nu h(z)}e^{i\alpha\nu},-ie^{-\nu h(z)}e^{i\alpha\nu}e^{\nu t})\\
        &=&\frac{1}{2}\log\frac{|-ie^{-\nu h(z)}e^{i\alpha\nu}+ie^{-\nu\overline{h(z)}}e^{-i\alpha\nu}e^{\nu t}|+|-ie^{-\nu h(z)}e^{i\alpha\nu}+ie^{-\nu h(z)}e^{i\alpha\nu}e^{\nu t}|}{|-ie^{-\nu h(z)}e^{i\alpha\nu}+ie^{-\nu\overline{h(z)}}e^{-i\alpha\nu}e^{\nu t}|-|-ie^{-\nu h(z)}e^{i\alpha\nu}+ie^{-\nu h(z)}e^{i\alpha\nu}e^{\nu t}|}\\
        &=&\frac{1}{2}\log\frac{|1-e^{2i\nu\text{Im}h(z)}e^{-2i\alpha\nu}e^{\nu t}|+|1-e^{\nu t}|}{|1-e^{2i\nu\text{Im}h(z)}e^{-2i\alpha\nu}e^{\nu t}|-|1-e^{\nu t}|}\\
        &=&\frac{1}{2}\log\frac{\sqrt{1-2e^{\nu t}\cos(2\nu\text{Im}h(z)-2\alpha\nu)+e^{2\nu t}}+(1-e^{\nu t})}{\sqrt{1-2e^{\nu t}\cos(2\nu\text{Im}h(z)-2\alpha\nu)+e^{2\nu t}}-(1-e^{\nu t})}\\
        &=&\frac{1}{2}\log\frac{(\sqrt{1-2e^{\nu t}\cos(2\nu\text{Im}h(z)-2\alpha\nu)+e^{2\nu t}}+(1-e^{\nu t}))^2}{1-2e^{\nu t}\cos(2\nu\text{Im}h(z)-2\alpha\nu)+e^{2\nu t}-1+2e^{\nu t}-e^{2\nu t}}\\
        &\le&\frac{1}{2}\log\frac{(\sqrt{1+2e^{\nu t}+e^{2\nu t}}+(1-e^{\nu t}))^2}{2e^{\nu t}(1-\cos(2\nu\text{Im}h(z)-2\alpha\nu))}\\
        &=&\frac{1}{2}\log\frac{2e^{-\nu t}}{1-\cos(2\nu\text{Im}h(z)-2\alpha\nu)}.
\end{eqnarray*}
Hence, returning to (\ref{hyperbolic-lower}), we find
$$|\widetilde{\gamma}_z(t)-\sigma|\ge\frac{1-|z|}{1+|z|}\exp{\left(-\log\frac{2e^{-\nu t}}{1-\cos(2\nu\text{Im}h(z)-2\alpha\nu)}\right)},$$
which leads to the desired result at once.
\end{proof}

\begin{remark}\label{remark1}
Inspecting the proofs for the upper bounds in Theorem \ref{thm:backward}(a) and (b) carefully, one may observe that the inclusion of $z$ in the parabolic petal $\Delta$ did not play any significant role. Indeed, such bounds are valid for non-regular backward orbits contained in the boundary of the petal. The same applies for the upper bound in Theorem \ref{thm:backward}(c). Other non-regular backward orbits, such as the ones converging to super-repelling fixed points, will be studied in subsequent subsections. As a matter of fact, we will see that the rate of convergence of such backward orbits might be even faster than $e^{\nu t}$, $\nu < 0$.
\end{remark}

\begin{remark}\label{remark on groups, backward}
    A similar remark to Remark \ref{remark on groups, forward} holds for backward orbits as well. As a matter of fact, if $(\phi_t)$ is a non-elliptic group, then it has a unique petal $\Delta$ and $\Delta=\D$. Therefore, backward orbits behave exactly as forward orbits and we get the same rates.
\end{remark}

We move on to the sharpness of the bounds in Theorem \ref{thm:backward}. First notice that the statement itself of Theorem \ref{thm:backward}(b) provides the sharpness for parabolic semigroups of positive hyperbolic step. So, we only need to work with cases (a) and (c). We commence with an example showing the sharpness of the upper bound in Theorem \ref{thm:backward}(a).

\begin{example}
Consider the ``Koebe-like'' domain $\Omega = \C \setminus (-\infty,0]$. Evidently, this is a convex in the positive direction simply connected domain, and so we can relate $\Omega$ to a non-elliptic semigroup $(\phi_t)$ as usual. In particular, since $\Omega$ is not contained inside any horizontal half-plane, $(\phi_t)$ must be parabolic of zero hyperbolic step.
Let $F \colon \H \to \Omega$, where $\H$ is the upper half-plane, be the conformal mapping given by $F(z) = -z^2$. Consider also the M\"obius transform $S \colon \D \to \H$ satisfying $S(0) = i$, $S(1) = \infty$ and $S(-1) = 0$. Then, we can define the semigroup $(\phi_t)$ with Koenigs function $h:=F\circ S$ given by
\begin{equation}\label{eq:koebelike}
\phi_t(z) = h^{-1}(h(z)+t) = \dfrac{\sqrt{\left(\frac{1+z}{1-z}\right)^2+t}-1}{\sqrt{\left(\frac{1+z}{1-z}\right)^2+t}+1}, \quad z \in \D, \, t \geq 0,
\end{equation}
where we are using the branch of the square root that takes $-\Omega$ onto $\H$. Notice that the Denjoy-Wolff point of the semigroup is $1$. By the geometry of $\Omega$, we can see that the semigroup $(\phi_t)$ possesses two parabolic petals, given by the lower and upper parts of the unit disk. For $z$ inside any of the petals, its backward orbit $\widetilde{\gamma}_z$ may be defined via $\widetilde{\gamma}_z(t)=h^{-1}(h(z)-t)$, for all $t\ge0$. For example, suppose that $z \in\Delta := \D \cap \H$. It is possible to see by (\ref{eq:koebelike}) that
$$\sqrt{t}\abs{\widetilde{\gamma}_z(t)-1} = \dfrac{2\sqrt{t}}{\abs{\sqrt{\left(\frac{1+z}{1-z}\right)^2-t}+1}} \to 2, \quad \text{as } t \to +\infty.$$
\end{example}

\medskip

The sharpness of the lower bound in Theorem \ref{thm:backward}(a) can be derived by using similar arguments as in \cite[Theorem 5.6]{BCDM-Rate}, but this time for backward orbits. Indeed, using the angular sectors $\{w\in\C:-\theta<\arg w<\pi\}$, where $\theta\in(0,\pi)$, as Koenigs domains, one may prove that the lower rate can reach arbitrarily close to $1/t$.

\medskip

We end the subsection with a remark with regard to the sharpness of Theorem \ref{thm:backward}(c).

\begin{remark}\label{rmk:conformality petal}
    In an analogue manner to Remark \ref{rmk:conformality Denjoy}, we may proceed to a distinction within the class of hyperbolic petals. Suppose that $\Delta$ is a hyperbolic petal of a non-elliptic semigroup $(\phi_t)$ and let $\sigma$ be the unique repelling fixed point with $\sigma\in\partial\Delta\cap\partial\D$. In \cite{GKR} the authors provide a definition for the conformality of $\Delta$ at $\sigma$. Indeed, the petal $\Delta$ is called \textit{conformal} at $\sigma$ if there exists a conformal mapping $\phi$ of $\D$ onto $\Delta$ such that 
    \begin{equation*}
        \angle\lim\limits_{z\to 1}\phi(z)=\sigma \quad\text{and}\quad \angle\lim\limits_{z\to 1}\frac{\phi(z)-\sigma}{z-1}\in\C.
    \end{equation*}
    This notion of conformality is actually independent of $\phi$ and is an intrinsic geometric property of $\Delta$. According to \cite[Subsection 6.1]{GKR}, if $\Delta$ is conformal at $\sigma$, then $\lim_{t\to+\infty}(e^{-\nu t}|\widetilde{\gamma}_z(t)-\sigma|)$, where $\nu$ is the repelling spectral value at $\sigma$, exists finitely and does not vanish. Hence, the $\epsilon$ in the upper bound of Theorem \ref{thm:backward}(c) can be removed and we obtain a better estimate of the rate. On the other hand, if $\Delta$ is not conformal at $\sigma$, then the $\epsilon$ is necessary and Theorem \ref{thm:backward}(c) provides a sharp upper bound.
\end{remark}

To end the subsection, we notice that Theorem \ref{thm:backward}(a) and (b) can be used to extend \cite[Proposition 4.20]{BCDMG} to parabolic petals.
\begin{corollary}
Let $(\phi_t)$ be a non-elliptic semigroup in $\D$ with a parabolic petal $\Delta$. If $z \in \Delta$, then
$$\lim_{t \to +\infty}\left(\dfrac{1}{t}\log\abs{\widetilde{\gamma}_z(t)-\tau}\right) = 0.$$
\end{corollary}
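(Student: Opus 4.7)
The key observation is that, by definition, a parabolic petal $\Delta$ corresponds to the Denjoy-Wolff point $\tau$, and parabolic petals can only appear in parabolic semigroups. Hence $(\phi_t)$ is either parabolic of zero hyperbolic step or parabolic of positive hyperbolic step, so that exactly one of the first two cases of Theorem \ref{thm:backward} applies to the backward orbit $\widetilde{\gamma}_z$.

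The plan is to sandwich $\log\abs{\widetilde{\gamma}_z(t)-\tau}$ between the logarithms of the upper and lower bounds provided by Theorem \ref{thm:backward}(a)--(b), and then divide by $t$. More precisely, in case (a) the upper bound has the form $C_1(z)\,t^{-1/2}$, while in case (b) it has the form $C_2(z)\,t^{-1}$; in either case,
$$\frac{1}{t}\log\abs{\widetilde{\gamma}_z(t)-\tau} \;\leq\; \frac{\log C_i(z)}{t} - \frac{\beta\log t}{t},$$
with $\beta\in\{1/2,1\}$, and the right-hand side tends to $0$ as $t\to+\infty$. In both (a) and (b), the lower bound has the common shape
$$\abs{\widetilde{\gamma}_z(t)-\tau} \;\geq\; \frac{c(z)}{t},$$
with $c(z)>0$ depending only on the starting point and the geometry of $h(\Delta)$, so that
$$\frac{1}{t}\log\abs{\widetilde{\gamma}_z(t)-\tau} \;\geq\; \frac{\log c(z) - \log t}{t},$$
which also tends to $0$. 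Combining the two estimates yields the desired limit.

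There is essentially no obstacle here; the only point requiring a brief justification is that the quantities $C_i(z)$ and $c(z)$ in Theorem \ref{thm:backward}(a)--(b) are positive and finite for any fixed $z\in\Delta$ (this is immediate from the explicit expressions, noting in particular that for $z\in\Delta$ one has $\operatorname{Im}h(z)\neq\alpha$ and $z\neq\tau$, so the factor $\abs{z-\tau}\min\{1,\abs{\operatorname{Im}h(z)-\alpha|\}$ in the lower bound is strictly positive). The corollary then follows by taking $t\to+\infty$ in the two-sided estimate.
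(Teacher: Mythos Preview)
Your proposal is correct and follows exactly the approach indicated in the paper, which simply states that the corollary is a direct consequence of the upper and lower bounds in Theorem \ref{thm:backward}(a) and (b). Your additional remark checking that the constants $c(z)$ and $C_i(z)$ are strictly positive and finite for $z\in\Delta$ is appropriate and completes the argument.
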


\subsection{Non-regular Backward Orbits}

We now turn our attention towards non-regular backward orbits. Keeping in mind the geometry of the Koenigs domains of non-elliptic semigroups, as well as the geometry of the image through the Koenigs function of the respective petals, we undestand that we may classify non-regular backward orbits into three categories:
\begin{enumerate}
    \item[(i)] either the backward orbit lies on the boundary of a parabolic petal and converges tangentially to the Denjoy-Wolff point of the semigroup (see \cite[Theorem 13.1.7]{BCDM}),
    \item[(ii)] or the backward orbit lies on the boundary of a hyperbolic petal and converges tangentially to a repelling fixed point of the semigroup (see \cite[Theorem 13.4.12]{BCDM}),
    \item[(iii)] or the backward orbit is not contained in the closure of any petal and converges to a super-repelling fixed point of the semigroup.
\end{enumerate}
Non-regular backward orbits belonging to the first category appear only in parabolic semigroups. The other two types can emerge in any type of semigroup (even in the elliptic case later on).

Suppose that $(\phi_t)$ is a non-elliptic semigroup in $\D$ with Koenigs function $h$ and Koenigs domain $\Omega$. Let $\widetilde{\gamma}_z:[0,+\infty)\to\D$ be a non-regular backward orbit for $(\phi_t)$. Then, it is easy to comprehend that
\begin{equation}\label{eq:non-regular distance}
\lim\limits_{t\to+\infty}\delta_\Omega(h(\widetilde{\gamma}_z(t)))=\lim\limits_{t\to+\infty}\delta_\Omega(h(z)-t)=0,
\end{equation}
regardless of the type of the semigroup or the type of the backward orbit. On the other hand, this trait is not shared by regular backward orbits. Indeed, their inclusion inside a petal forces the above limit to be positive. 

Despite this difference, the rates of convergence for non-regular backward orbits can be derived in similar fashion to the respective rates for regular backward orbits.

\begin{proof}[Proof of Theorem \ref{thm:non-regular}]
     Inspecting closely the proofs of Theorem \ref{thm:backward}(a) and (b), we see that the inclusion of the starting point $z$ inside a parabolic petal $\Delta$ did not factor when executing all the necessary steps to extract the desired result. Indeed, the two proofs work exactly in the same manner whenever $z\in\partial\Delta$. Ergo the desired rates for (a) and (b) can be deduced at once. In addition, a similar remark is true for the proof of Theorem \ref{thm:backward}(c) in case the starting point $z$ lies on the boundary of a hyperbolic petal. As a result, the desired rate in (c) is also true.

     (d) All that remains is to work with $\widetilde{\gamma}_z$ when it converges to a super-repelling fixed point $\sigma\in\partial\D\setminus\{\tau\}$. In this case, the repelling spectral value of $(\phi_t)$ at $\sigma$ can be said to be $-\infty$. Again, we work as in the proof of Theorem \ref{thm:backward}(c). Through the diameter estimate for harmonic measure and assuming without loss of generality that $h(0)=0$, we may write
    $$|\widetilde{\gamma}_z(t)-\sigma|\le2\pi\omega(0,B_t,\Omega\setminus B_t),$$
    for all $t\ge0$, where $B_t=\{h(z)-s:s\in[t,+\infty)\}$. Due to the convexity of $\Omega=h(\D)$ in the positive direction and the fact that $\lim_{t\to+\infty}\delta_\Omega(h(z)-t)=0$ with $z$ not contained in the closure of any petal, there exists some $T\ge0$ such that for each $t\ge T$ we are able to find points $p_t^+, p_t^-\in\partial\Omega$ with $\text{Re}p_t^+=\text{Re}p_t^-=\text{Re}h(z)-t$ and $\text{Im}p_t^-<\text{Im}h(z)<\text{Im}p_t^+$. In case there are multiple points of $\partial\Omega$ satisfying these conditions, we choose the ones the are closest to $h(z)-t$ in Euclidean terms. Set $d_t=\text{Im}p_t^+-\text{Im}p_t^-$. Evidently, $\lim_{t\to+\infty}d_t=0$. Since $p_T^+,p_T^-\in\partial\Omega$, the whole horizontal half-lines $Q_T^+=\{p_T^+-s:s\ge0\}$ and $Q_T^-=\{p_T^--s:s\ge0\}$ are contained in $\C\setminus\Omega$, because of the convexity. So $\Omega\subset\C\setminus(Q_T^+\cup Q_T^-)=:\Omega_T$. Therefore, the domain monotonicity property for harmonic measure yields
    $$\omega(0,B_t,\Omega\setminus B_t)\le\omega(0,B_t,\Omega_T\setminus B_t).$$
    For $t\ge T$, set $D_t=\{\text{Re}h(z)-t+i\text{Im}p_T^-+is:s\in[0,d_T]\}$. Then, by the maximum principle for harmonic functions, we obtain
    $$\omega(0,B_t,\Omega_T\setminus B_t)\le\omega(0,D_t,A_T),$$
    where $A_T$ denotes the simply connected domain bounded by $\partial\Omega_T$ and $D_t$ and that is not fully contained in the horizontal strip defined by the carriers of $Q_T^+,Q_T^-$. Binding everything together, we have found that
    \begin{equation}\label{eq:last1}
    |\widetilde{\gamma}_z(t)-\sigma|\le2\pi\omega(0,D_t,A_T).
    \end{equation}
    Using Theorem \ref{thm:Beurling}, one has
    \begin{equation}\label{eq:last2}
    \omega(0,D_t,A_T)\le\frac{8}{\pi}\exp\left(-\pi\lambda_{A_T\setminus[0,+\infty)}(D_t,[0,+\infty))\right),
    \end{equation}
    since the horizontal half-line $[0,+\infty)$ joins $0$ to $\partial A_T\setminus D_t$.
    However, combining Lemma \ref{lemma:serial-rule} and Example \ref{ex:rectangle}, we get that
    \begin{equation}\label{eq:last3}
    \lambda_{A_T\setminus[0,+\infty)}(D_t,[0,+\infty))\ge\lambda_R(D_t,D_T)=\frac{t-T}{d_T},
    \end{equation}
    where $R$ denotes the rectangle bounded by $D_t,D_T,Q_T^+$ and $Q_T^-$. As a result, by merging \eqref{eq:last1}, \eqref{eq:last2} and \eqref{eq:last3}, we obtain
    $$|\widetilde{\gamma}_z(t)-\sigma|\le 16 e^{-\pi\frac{t-T}{d_T}}=16e^{\frac{\pi T}{d_T}}e^{-\frac{\pi}{d_T}t},$$
    for all $t>T$. Nevertheless, the fact that $\lim_{t\to+\infty}\delta_\Omega(h(z)-t)=0$ dictates that by enlarging $T$, the width $d_T$ can reach arbitrarily close to $0$. As a consequence, letting $\epsilon>0$ and choosing $T$ such that $d_T<\pi/\epsilon$, we get the desired result for $t>T$. Finally, for $t\in[0,T]$, as in the previous proofs, we may write
    $$|\widetilde{\gamma}_z(t)-\sigma|\le 2=2e^{\frac{\pi}{d_T}t}e^{-\frac{\pi}{d_T}t}\le 2e^{\frac{\pi T}{d_T}}e^{-\frac{\pi}{d_T}t}\le2e^{\frac{\pi T}{d_T}}e^{-\epsilon t}.$$
    Thus, setting $C=C(z,\epsilon)=16e^{\pi T/d_T}$ (recall that $T$ depends on $z$ and $\epsilon$), we have the desired result for all $t\ge0$.

\end{proof}

As one may observe, we did not provide any lower bound for the rate of convergence of non-regular backward orbits. This is because non-regular backward orbits of non-elliptic semigroups may converge arbitrarily fast to the fixed point at which they are going to land. We may verify this through the next example.

\begin{example}\label{ex:non-regular}
Consider an increasing smooth map $\theta \colon \R \to (0,+\infty)$ such that
$$\lim_{x \to -\infty}\theta(x) = 0, \qquad \lim_{x \to +\infty}\theta(x) = +\infty.$$
Define $\Omega := \{x+iy \in \C : x \in \R, \, \abs{y} < \theta(x)\}$. By construction, $\Omega$ is a domain which is starlike at infinity and which is symmetric with respect to the real line (i.e., $\Omega = \{\overline{z} : z \in \Omega\}$). Consider a Riemann mapping $h \colon \D \to \Omega$ such that $h((-1,1)) = \R$ and $h'(0) > 0$. Let $(\phi_t)$ be the semigroup in $\D$ given by
$$\phi_t(z) = h^{-1}(h(z)+t), \qquad t \ge 0, \, z \in \D.$$
It is clear that $(\phi_t)$ is a non-elliptic semigroup with Denjoy-Wolff point $\tau = 1 \in \partial\D$. Since $\Omega$ is not contained in any horizontal strip or horizontal half-plane, $(\phi_t)$ must be a parabolic semigroup of zero hyperbolic step. We also notice that $\sigma = -1 \in \partial\D$ is a boundary fixed point of $(\phi_t)$. Indeed, by the geometry of $\Omega$, $\sigma$ is a super-repelling point. In particular, let $z \in (-1,1)$, and consider the backward orbit $\widetilde{\gamma}_z \colon [0,+\infty) \to \D$ starting at $z$. We have that $\lim_{t \to +\infty}\widetilde{\gamma}_z(t) = \sigma$. Notice that $h\circ\widetilde{\gamma}_z([0,+\infty))$ lies on the axis of symmetry of $\Omega$ and hence $\widetilde{\gamma}_z(t)$ converges non-tangentially (in fact, orthogonally) to $\sigma$, as $t \to +\infty$. Then $\widetilde{\gamma}_z([0,+\infty))$ is contained in some Stolz angle of $\D$ with vertex $\sigma$. Therefore, there must exist $K > 0$ so that
\begin{equation}\label{eq:stolz}
\abs{\widetilde{\gamma}_z(t)-\sigma} \leq K(1-\abs{\widetilde{\gamma}_z(t)}), \qquad t \ge 0.
\end{equation}
As we have done previously, using the triangle inequality for the hyperbolic distance and its conformal invariance, we get that
\begin{align}
\abs{\widetilde{\gamma}_z(t)-\sigma} & \leq K(1-\abs{\widetilde{\gamma}_z(t)}) \leq 2K\dfrac{1-\abs{\widetilde{\gamma}_z(t)}}{1+\abs{\widetilde{\gamma}_z(t)}} = 2K\exp(-2d_{\D}(0,\widetilde{\gamma}_z(t))) \nonumber \\
& \leq 2K\exp(2d_{\D}(0,z))\exp(-2d_{\D}(z,\widetilde{\gamma}_z(t))) \nonumber \\
& = 2K\exp(2d_{\D}(0,z))\exp(-2d_{\Omega}(h(z),h(z)-t)), \qquad t \ge 0.
\label{eq:first-estimate-super-repelling}
\end{align}

But, using Theorem \ref{DistanceLemma}, we get that
$$d_{\Omega}(h(z),h(z)-t) \geq \dfrac{1}{4}\log\left(1+\dfrac{t}{\min\{\delta_{\Omega}(h(z)),\delta_{\Omega}(h(z)-t)\}}\right), \quad t \ge 0.$$
By construction, for all $t \ge 0$ we have that
$$\delta_{\Omega}(h(z)-t) \leq \delta_{\Omega}(h(z)) \;\text{ and }\; \delta_{\Omega}(h(z)-t) \leq \theta(\textup{Re}h(z)-t).$$
So,
\begin{equation}\label{eq:last distance lemma}
d_{\Omega}(h(z),h(z)-t) \geq \dfrac{1}{4}\log\left(1+\dfrac{t}{\theta(\textup{Re}h(z)-t)}\right).
\end{equation}
Therefore, using \eqref{eq:first-estimate-super-repelling} and \eqref{eq:last distance lemma}, we get that
$$\abs{\widetilde{\gamma}_z(t)-\sigma} \leq 2K\dfrac{1+\abs{z}}{1-\abs{z}}\sqrt{\dfrac{\theta(\textup{Re}h(z)-t)}{t+\theta(\textup{Re}h(z)-t)}} \leq 2K\dfrac{1+\abs{z}}{1-\abs{z}}\sqrt{\dfrac{\theta(\textup{Re}h(z)-t)}{t}}.$$
But $\theta(\textup{Re}h(z)-t)$ may converge to $0$ arbitrarily fast or slow, as $t\to+\infty$. Therefore, the quantity $|\widetilde{\gamma}_z(t)-\sigma|$ cannot be bounded below by the same rate for all semigroups.
\end{example}

Recall that the rate of convergence of a regular backward orbit in a hyperbolic petal can be quantified using the spectral value $\nu \in (-\infty,0)$ of its associated repelling fixed point; see \cite[Proposition 4.20]{BCDMG}. On the other hand, as we said before, the spectral value of super-repelling fixed points is $\nu = -\infty$. Using this convention, we give the following consequence of Theorem \ref{thm:non-regular}(d), which extends \cite[Proposition 4.20]{BCDMG} to the case of super-repelling fixed points.
\begin{corollary}
\label{cor:super-repelling}
Let $(\phi_t)$ be a non-elliptic semigroup in $\D$ with a super-repelling fixed point $\sigma \in \partial\D$. Suppose that $z \in \D$ is such that $\lim_{t\to+\infty}\widetilde{\gamma}_z(t)=\sigma$. Then
$$\lim_{t \to +\infty}\left(\dfrac{1}{t}\log\abs{\widetilde{\gamma}_z(t)-\sigma}\right) = -\infty.$$
\end{corollary}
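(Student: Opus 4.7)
The plan is to deduce this directly from Theorem \ref{thm:non-regular}(d), which already provides the essentially exponential-with-arbitrary-rate upper bound. Since a super-repelling fixed point cannot lie on the boundary of any petal (its very definition and \cite[Proposition 13.5.2]{BCDM}-type considerations guarantee that no horizontal strip in $\Omega$ maximally touches it, so $\widetilde{\gamma}_z$ cannot sit on $\partial\Delta$ for any petal $\Delta$), the hypothesis $\lim_{t\to+\infty}\widetilde{\gamma}_z(t)=\sigma$ with $\sigma$ super-repelling places us exactly in case (d) of Theorem \ref{thm:non-regular}. I would invoke that theorem to write, for each $\epsilon>0$, a constant $C=C(z,\epsilon)>0$ such that
\begin{equation*}
\abs{\widetilde{\gamma}_z(t)-\sigma}\le C e^{-\epsilon t},\qquad t>1.
\end{equation*}

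Taking logarithms and dividing by $t$ yields
\begin{equation*}
\frac{1}{t}\log\abs{\widetilde{\gamma}_z(t)-\sigma}\le \frac{\log C}{t}-\epsilon,
\end{equation*}
whence $\limsup_{t\to+\infty}\frac{1}{t}\log\abs{\widetilde{\gamma}_z(t)-\sigma}\le -\epsilon$. Since $\epsilon>0$ is arbitrary, this upper limit equals $-\infty$, and consequently the ordinary limit exists and equals $-\infty$. Concretely, for any prescribed $M>0$, one applies the theorem with $\epsilon=M+1$ and then picks $t$ large enough that $\frac{\log C(z,M+1)}{t}<1$, which forces $\frac{1}{t}\log\abs{\widetilde{\gamma}_z(t)-\sigma}<-M$.

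There is essentially no obstacle here; the work has already been done in Theorem \ref{thm:non-regular}(d). The only point that deserves an explicit sentence is the verification that the hypothesis of the corollary actually falls under case (d) rather than (c): a super-repelling fixed point has ``spectral value'' $-\infty$ and, by the geometric description of petals via maximal horizontal strips in $\Omega$ (Section 2 and \cite[Chapter 13]{BCDM}), no hyperbolic petal can correspond to it, so the orbit $\widetilde{\gamma}_z$ cannot lie on the boundary of any petal. After recording this brief remark, the two-line estimate above completes the proof.
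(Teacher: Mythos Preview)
Your proposal is correct and matches the paper's approach: the corollary is presented there simply as a consequence of Theorem~\ref{thm:non-regular}(d), and your log-and-divide-by-$t$ argument is precisely how one extracts the limit statement from the arbitrary-$\epsilon$ exponential bound. The paper's three-fold classification of non-regular backward orbits (orbits on the boundary of a parabolic petal converge to $\tau$; those on the boundary of a hyperbolic petal converge to a \emph{repelling} fixed point; orbits outside the closure of every petal converge to a super-repelling fixed point) already ensures that a backward orbit converging to a super-repelling $\sigma$ falls under case (d), so your side remark about petals is consistent with the paper's setup.
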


\section{Elliptic Semigroups}
We conclude our study on the rates of convergence by dealing with elliptic semigroups. The desired result for forward orbits is mostly known (see \cite{Gurganus} and \cite[Proposition 4.4.2, Remark 4.4.4]{Shoikhet}) and its proof is entirely different than the ones we provided in the non-elliptic case. This difference relies mostly on the fact that this time the Denjoy-Wolff point lies inside the unit disk. As mentioned before, we will still provide the proof of the result for the sake of completeness. On the other hand, the result for the backward orbits is derived via the \textit{lifting technique} that correlates an elliptic semigroup in $\D$ with a non-elliptic semigroup in the right half-plane $\C_0$. This technique is a remarkable result introduced by Gumenyuk in \cite[Proposition 2.1]{Gumenyuk} for semigroups. A similar technique also appeared in \cite{CDM} for Loewner chains. More specifically, we will approximate the quantity $|\widetilde{\gamma}_z(t)-\sigma|$ for an elliptic semigroup $(\phi_t)$ with a regular backward orbit $\widetilde{\gamma}_z$ converging to a repelling fixed point $\sigma\in\partial\D$ by correlating it with the quantity $|\gamma(t)-\hat{\sigma}|$, where $\gamma$ is a regular backward orbit contained in a hyperbolic petal of a non-elliptic semigroup $(\hat{\phi}_t)$ with repelling fixed point $\hat{\sigma}$. The semigroup $(\hat{\phi}_t)$ is going to be defined via the initial semigroup $(\phi_t)$, despite their striking difference in type.

\begin{proof}[Proof of Theorem \ref{thm:elliptic}]

(a) We start by assuming that $\tau = 0$. In that case, using (\ref{infinitesimal generator}) and (\ref{berkson-porta}), the semigroup $(\phi_t)$ can be written as the solution of the differential equation
\begin{equation}\label{eq:diff eq}
\dfrac{\partial\phi_t(z)}{\partial t} = -\phi_t(z)p(\phi_t(z)), \qquad z \in \D,
\end{equation}
where $\phi_0(z) = z$ for all $z \in \D$ and $p$ is a holomorphic function that maps $\D$ into $\C_0$ (recall that $(\phi_t)$ is not a group).

Using a standard argument (see for instance \cite[Proof of Lemma in p.52]{Duren}), it is possible to see that
\begin{equation}\label{eq:standard argument}
\abs{z}e^{-\epsilon_{1,z}t} \leq \abs{\phi_t(z)} \leq \abs{z}e^{-\epsilon_{2,z}t}, \qquad t \geq 0, \, z \in \D,
\end{equation}
where $\epsilon_{1,z} = \max\{\mathrm{Re}p(w) : \abs{w} \leq \abs{z}\}$ and $\epsilon_{2,z} = \min\{\mathrm{Re}p(w) : \abs{w} \leq \abs{z}\}$. 

By \cite[Corollary 10.1.12.(1)]{BCDM}, we note that $p(0) = \lambda$, the spectral value of $(\phi_t)$. Thus, the function
$$p_1(z) := \dfrac{p(z)-i\mathrm{Im}\lambda}{\mathrm{Re}\lambda}$$
is holomorphic, maps $\D$ into $\C_0$ and satisfies $p_1(0) = 1$. In that case, a distortion theorem (see \cite[Theorem 2.2.1]{BCDM}) assures that
\begin{equation}\label{eq:distortion half-plane}
\mathrm{Re}\lambda\dfrac{1-\abs{z}}{1+\abs{z}} \leq \mathrm{Re}p(z) = \mathrm{Re}\lambda\mathrm{Re}p_1(z) \leq \mathrm{Re}\lambda\dfrac{1+\abs{z}}{1-\abs{z}}.
\end{equation}
It follows that
\begin{equation}\label{eq:last elliptic}
\epsilon_{1,z} \leq \mathrm{Re}\lambda\dfrac{1+\abs{z}}{1-\abs{z}}, \qquad \epsilon_{2,z} \geq \mathrm{Re}\lambda\dfrac{1-\abs{z}}{1+\abs{z}}.
\end{equation}
Hence, combining \eqref{eq:standard argument} with \eqref{eq:last elliptic},
\begin{align*}
\abs{\gamma_z(t)} & = \abs{\phi_t(z)} \leq \abs{z}\exp\left(-\mathrm{Re}\lambda\dfrac{1-\abs{z}}{1+\abs{z}}t\right) = \abs{z}\exp\left(-\mathrm{Re}\lambda\exp(-2d_{\D}(0,z))t\right),
\end{align*}
and
\begin{align*}
\abs{\gamma_z(t)} & = \abs{\phi_t(z)} \geq \abs{z}\exp\left(-\mathrm{Re}\lambda\dfrac{1+\abs{z}}{1-\abs{z}}t\right) = \abs{z}\exp\left(-\mathrm{Re}\lambda\exp(2d_{\D}(0,z))t\right).
\end{align*}

Let us now address the general case in which $\tau \in \D$. To do so, define $\Psi \colon \D \to \D$  by
\begin{equation}\label{eq:automorphism}
\Psi(z) = \dfrac{\tau-z}{1-\overline{\tau}z}, \quad z \in \D.
\end{equation}
Notice that $\Psi = \Psi^{-1}$ and $\Psi(\tau) = 0$. Consider the semigroup $\widetilde{\phi}_t = \Psi \circ \phi_t \circ \Psi$, and notice that it is also elliptic, with Denjoy-Wolff point $0$ and spectral value $\lambda$. Therefore, by the previous arguments, 
$$\abs{z}\exp\left(-\mathrm{Re}\lambda\exp(2d_{\D}(0,z))t\right) \leq |\widetilde{\phi}_t(z)| \leq \abs{z}\exp\left(-\mathrm{Re}\lambda\exp(-2d_{\D}(0,z))t\right).$$
Then, due to \eqref{eq:automorphism},
\begin{align*}
\abs{\dfrac{\tau-\phi_t(z)}{1-\overline{\tau}\phi_t(z)}} = \abs{\Psi \circ \phi_t(z)} & \leq \abs{\Psi(z)}\exp\left(-\mathrm{Re}\lambda\exp(-2d_{\D}(0,\Psi(z)))t\right) \\
& = \abs{\Psi(z)}\exp\left(-\mathrm{Re}\lambda\exp(-2d_{\D}(\tau,z))t\right).  
\end{align*}
Similarly,
\begin{align*}
\abs{\dfrac{\tau-\phi_t(z)}{1-\overline{\tau}\phi_t(z)}} = \abs{\Psi \circ \phi_t(z)} & \geq \abs{\Psi(z)}\exp\left(-\mathrm{Re}\lambda\exp(2d_{\D}(0,\Psi(z)))t\right) \\
& = \abs{\Psi(z)}\exp\left(-\mathrm{Re}\lambda\exp(2d_{\D}(\tau,z))t\right).  
\end{align*}
Since
$$1-\abs{\tau} \leq 1-\abs{\tau}\abs{\phi_t(z)} \leq \abs{1-\overline{\tau}\phi_t(z)} \leq 1+\abs{\tau}\abs{\phi_t(z)} \leq 1+\abs{\tau},$$
the desired result follows.

(b) Next, suppose that $z$ is contained inside the hyperbolic petal $\Delta$ of $(\phi_t)$. By potentially using two automorphisms of the unit disk, we may assume that $\tau=0$ and $\sigma=1$. Then, by \cite[Proposition 2.1]{Gumenyuk},  there exists a non-elliptic semigroup $(\psi_t)$ on the right half-plane $\C_0$ with Denjoy-Wolff point $\infty$ such that
\begin{equation}\label{eq:lifting}
e^{-\psi_t(w)}=\phi_t(e^{-w}), \quad\text{for all }w\in\C_0.
\end{equation}
Set $F(w)=e^{-w}$. The mapping $F$ maps the half-strip $\Sigma=\{w\in\C:|\text{Im}w|<\pi, \; \text{Re}w>0\}$ conformally onto the slit unit disk $\D\setminus(-1,0]=:\D_+$. Even though $F$ can be defined for the whole complex plane, from now on, we only consider its restriction on $\Sigma$. Since $\lim_{t\to+\infty}\widetilde{\gamma}_z(t)=\sigma=1$, there exists a $T\ge0$ such that $|\log\widetilde{\gamma}_z(t)|<1$, for all $t\ge T$. By extension, $\widetilde{\gamma}_z(t)\in\D_+$, for all $t\ge T$. Obviously $F^{-1}(1)=0$ and as a result, there exists a curve $\delta:[T,+\infty)\to\Sigma$ such that $F(\delta(t))=\widetilde{\gamma}_z(t)$ and $\lim_{t\to+\infty}\delta(t)=0$. So $e^{-\delta(t)}=\widetilde{\gamma}_z(t)$ and we are left with evaluating the quantity
\begin{equation}\label{eq:delta}
|e^{-\delta(t)}-1|, \quad t\ge T.
\end{equation}
Let $t> s> T$ with $t-s>T$. The formal definition of backward orbits yields 
$\widetilde{\gamma}_z(t-s)=\phi_s(\widetilde{\gamma}_z(t))$. Then
\begin{eqnarray*}
\delta(t-s)&=&F^{-1}(\widetilde{\gamma}_z(t-s))=F^{-1}(\phi_s(\widetilde{\gamma}_z(t)))=F^{-1}(\phi_s(e^{-\delta(t)}))\\ &=&F^{-1}(e^{-\psi_s(\delta(t))})=F^{-1}(F(\psi_s(\delta(t))))=\psi_s(\delta(t)).
\end{eqnarray*}
 As a consequence, $\delta$ is a backward orbit for the non-elliptic semigroup $(\psi_t)$. Our next goal is to move away from $\C_0$ and return to $\D$. For this reason, consider the Cayley transform $C:\D\to\C_0$ with $C(z)=\frac{1+z}{1-z}$. Through the conjugation
$$\Hat{\phi}_t=C^{-1}\circ\psi_t\circ C,$$
we construct a semigroup $(\hat{\phi}_t)$ in $\D$. It is easy to check that the new semigroup is also non-elliptic with Denjoy-Wolff point $C^{-1}(\infty)=1$ and repelling fixed point $C^{-1}(0)=-1$. Plus, following similar arguments as before, the curve $\gamma:[T,+\infty)\to\D$ with $\gamma(t)=C^{-1}(\delta(t))$ can be quickly verified to be a backward orbit for $(\hat{\phi}_t)$. At once
\begin{equation}\label{eq:delta2}
\delta(t)=C(\gamma(t))=\frac{1+\gamma(t)}{1-\gamma(t)}, \quad\text{for all }t\ge T,
\end{equation}
which signifies that because of \eqref{eq:delta} and \eqref{eq:delta2} we need to find an upper bound for
$$\left|e^{-\frac{1+\gamma(t)}{1-\gamma(t)}}-1\right|.$$
However, for $t\ge T$, we have $|\log\widetilde{\gamma}_z(t)|<1$, which in turn leads to $|\delta(t)|<1$. But for all $z\in\D$, it is easy to check that $(3-e)|z|\le |e^z-1|\le(e-1)|z|$, which implies that
$$\frac{3-e}{2}|1+\gamma(t)|\le(3-e)\frac{|1+\gamma(t)|}{|1-\gamma(t)|}\le\left|e^{-\frac{1+\gamma(t)}{1-\gamma(t)}}-1\right|\le\frac{e-1}{1-\text{Re}\gamma(t)}|1+\gamma(t)|.$$
Through the relation between $\gamma$ and $\delta$ it is easy to compute that
\begin{eqnarray*}
    1-\text{Re}\gamma(t)=\frac{2\text{Re}\delta(t)+2}{(\text{Re}\delta(t)+1)^2+(\text{Im}\delta(t))^2}&\ge& \frac{2}{(\text{Re}\delta(T)+1)^2+\pi^2}\\
    &=&\frac{2}{(\log|\Tilde{\gamma}_z(T)|+1)^2+\pi^2}.\end{eqnarray*}
Combining everything together, we have that
$$\frac{3-e}{2}|\gamma(t)+1|\le|\widetilde{\gamma}_z(t)-1|\le\frac{\left((\log|\Tilde{\gamma}_z(T)|+1)^2+\pi^2\right)(e-1)}{2}|\gamma(t)+1|,$$
for all $t\ge T$. Finally, by the conjugation formulas of $F$ and $C$, it is simple to check that $\angle\lim_{z\to -1}\hat{\phi}_t^{'}(z)=\angle\lim_{z\to1}\phi_t^{'}(z)=e^{-\nu t}$, since $\nu\in(-\infty,0)$ is the repelling spectral value of $(\phi_t)$ at $\sigma=1$. So $-1$ is a repelling fixed point for $(\hat{\phi}_t)$ with repelling spectral value $\nu$, as well. By what we have already proved in Theorem \ref{thm:backward}(c), we have that for every $\epsilon\in(0,-\nu)$, there exists some constants $C_1=C_1(z)$ and $C_2=C_2(z,\epsilon)>0$ which we have explicitly stated such that
$$C_1e^{\nu t}\le|\gamma(t)+1|\le C_2e^{(\nu+\epsilon)t}.$$
With this in mind, we deduce the desired result, for $t>T$, for the initial elliptic semigroup $(\phi_t)$ with
\begin{equation*}
    c_1=\frac{3-e}{2}\frac{1-|\zeta|}{1+|\zeta|}\sin^2[\nu(\textup{Im}\hat{h}(\zeta)-\alpha)], \; c_2=8(e-1)((\log|\Tilde{\gamma}_z(T)|+1)^2+\pi^2)e^{-(\nu+\epsilon)T},
\end{equation*}
where $\hat{h}$ is the Koenigs function of $(\hat{\phi}_t)$, $\zeta=\gamma(T)=\frac{\delta(T)-1}{\delta(T)+1}=\frac{\log \Tilde{\gamma}_z(T)+1}{\log \Tilde{\gamma}_z(T)-1}$ and the roles of the numbers $\alpha$ and $T$ have been explained. Following in the footsteps of the previous proofs, the result can be trivially expanded for all $t\ge0$.
\end{proof}

\begin{remark}\label{rmk: elliptic petal}
    A similar remark to Remark \ref{rmk:conformality petal} about hyperbolic petals and conformality is valid in elliptic semigroups, as well.
\end{remark}

\begin{remark}
    The proof of Theorem \ref{thm:elliptic}(b) with the use of the lifting technique may be used in an identical manner in order to derive upper bounds for the rate of non-regular backward orbits of elliptic semigroups. Since the proof is basically the same, we omit it for the sake of not being redundant.
\end{remark}

Finally, we need to prove that the results of Theorem \ref{thm:elliptic} are sharp. For the backward orbits, the sharpness is derived from the corresponding sharpness in Theorems \ref{thm:backward} and \ref{thm:non-regular} (for the regular backward orbits and the non-regular backward orbits, respectively). 

On the other hand, notice that Theorem \ref{thm:elliptic}(a) is the only case where the rate depends on the starting point $z$. In any other case, $z$ influenced solely the constants which govern the convergence to the respective fixed point. The sharpness of the upper bound can be easily explored. Indeed, consider an elliptic semigroup $(\phi_t)$ with spectral value $\lambda$, Denjoy-Wolff point $0$ and repelling fixed point $\sigma\in\partial\D$. It is easy to check that
\begin{equation}\label{eq:elliptic upper sharpness}
\lim\limits_{z\to\sigma}\frac{|\gamma_z(t)|}{|z|\exp\left(-\textup{Re}\lambda\frac{1-|z|}{1+|z|}t\right)}=1,
\end{equation}
for all $t\ge0$, which proves the desired sharpness and the necessity of the factor $\frac{1-|z|}{1+|z|}=\exp(-2d_\D(0,z))$ in the bound. Hence, we understand, that close to repelling fixed points, elliptic semigroups attain their slowest rate.

All that remains is to inspect the sharpness for the lower bound. We will achieve this through the next and final example.

\begin{example}
Fix $\lambda>0$ and consider the elliptic semigroup $(\phi_t)$ given by the formula
\begin{equation}
\label{formula}
\phi_t(z)=\frac{4e^{-\lambda t}z}{2(1-z)\sqrt{(1-z)^2+4e^{-\lambda t}z}+2(1-z)^2+4e^{-\lambda t}z}, \quad z\in\D,\; t\ge0.
\end{equation}
It can be readily checked through straightforward calculations that $(\phi_t)$ is indeed an elliptic semigroup with Denjoy-Wolff point $0$, spectral value $\lambda$, Koenigs function $h(z)=\frac{4z}{(1-z)^2}$, and Koenigs domain $\Omega:=\C\setminus(-\infty,-1]$.

Choose $z \in (-1,0)$, and notice by \eqref{formula} that
$$\abs{\dfrac{\phi_t(z)}{z}} = \frac{4e^{-\lambda t}}{2(1-z)\sqrt{(1-z)^2+4e^{-\lambda t}z}+2(1-z)^2+4e^{-\lambda t}z}, \qquad t\ge0.$$
Then, it is possible to check that
$$\lim_{t \to 0^+}\dfrac{\log\abs{\gamma_z(t)/z}}{t} = -\lambda\dfrac{1-z}{1+z} = -\lambda\dfrac{1+\abs{z}}{1-\abs{z}}.$$
Therefore, we see that the presence of the hyperbolic distance in the argument of the exponential rate of the lower estimate in Theorem \ref{thm:elliptic}(a) is crucial.
\end{example}

Last but not least, Theorem \ref{thm:elliptic}(a) helps us extract the following brief corollary in the spirit of \cite[Proposition 16.2.1]{BCDM}, which is its counterpart for non-elliptic semigroups.
\begin{corollary}
    Let $(\phi_t)$ be an elliptic semigroup in $\D$ with Denjoy-Wolff point $\tau\in\D$ and spectral value $\lambda\in\C$, $\textup{Re}\lambda>0$. Then, for all $z\in\D\setminus\{\tau\}$,
    $$\lim\limits_{t\to+\infty}\left(\frac{1}{t}\log|\gamma_z(t)-\tau|\right)=-\textup{Re}\lambda.$$
\end{corollary}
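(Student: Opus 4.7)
The bounds from Theorem \ref{thm:elliptic}(a) applied directly at the starting point $z$ do not suffice on their own: the two exponential rates $-\textup{Re}\lambda\exp(\pm 2d_\D(\tau,z))$ squeeze $\frac{1}{t}\log|\gamma_z(t)-\tau|$ between two different values unless $z=\tau$. The natural fix is to exploit the semigroup property and re-apply Theorem \ref{thm:elliptic}(a) at orbit points arbitrarily close to $\tau$, where the squeeze becomes arbitrarily tight.

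Concretely, fix $z\in\D\setminus\{\tau\}$. For every $s\ge 0$, set $w_s:=\gamma_z(s)\in\D\setminus\{\tau\}$ (nonvanishing because $\tau$ is the unique fixed point in $\D$ and $\phi_s$ is injective). By the semigroup property, for $t>s$ we have $\gamma_z(t)=\phi_{t-s}(w_s)=\gamma_{w_s}(t-s)$. Applying Theorem \ref{thm:elliptic}(a) with starting point $w_s$ and time $t-s$ yields
\begin{equation*}
\dfrac{(1-|\tau|)|\tau-w_s|}{|1-\bar{\tau}w_s|}e^{-\textup{Re}\lambda \exp(2d_\D(\tau,w_s))(t-s)} \leq |\gamma_z(t)-\tau| \leq \dfrac{(1+|\tau|)|\tau-w_s|}{|1-\bar{\tau}w_s|}e^{-\textup{Re}\lambda \exp(-2d_\D(\tau,w_s))(t-s)}.
\end{equation*}
Taking $\tfrac{1}{t}\log$ of both sides and letting $t\to+\infty$ with $s$ held fixed, the (negative, finite) constants disappear and $(t-s)/t\to 1$, so
\begin{equation*}
-\textup{Re}\lambda\,e^{2d_\D(\tau,w_s)} \le \liminf_{t\to+\infty}\tfrac{1}{t}\log|\gamma_z(t)-\tau| \le \limsup_{t\to+\infty}\tfrac{1}{t}\log|\gamma_z(t)-\tau| \le -\textup{Re}\lambda\,e^{-2d_\D(\tau,w_s)}.
\end{equation*}

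Finally, let $s\to+\infty$. Since $\tau\in\D$ and $w_s=\gamma_z(s)\to\tau$ by the Denjoy-Wolff Theorem, the continuity of the hyperbolic metric on $\D\times\D$ forces $d_\D(\tau,w_s)\to 0$, and both exponentials $e^{\pm 2d_\D(\tau,w_s)}$ tend to $1$. The outer bounds therefore both converge to $-\textup{Re}\lambda$, which pins the $\liminf$ and $\limsup$ to this common value and proves the desired limit. The only mildly delicate point is ensuring that the multiplicative constants in front of the exponentials, which depend on $s$ and tend to $0$ as $s\to+\infty$, do not corrupt the limit — but this is harmless because $s$ is fixed while $t\to+\infty$ first, so their logarithms are absorbed into $o(1)$ terms.
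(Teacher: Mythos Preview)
Your proof is correct and uses essentially the same idea as the paper: invoke the semigroup property to restart the orbit at a point $w_s=\gamma_z(s)$ close to $\tau$, apply Theorem \ref{thm:elliptic}(a) there, and then pass to a double limit. The paper parametrizes the restart differently, splitting $t=\tfrac{t}{\epsilon}+\tfrac{(\epsilon-1)t}{\epsilon}$ with $\epsilon>1$ so the restart point $\gamma_z((\epsilon-1)t/\epsilon)$ moves with $t$, and then lets $\epsilon\to1$; your fixed-$s$-then-$s\to\infty$ version is a bit cleaner since it requires only one application of Theorem \ref{thm:elliptic}(a) and avoids the auxiliary bound on the prefactor.
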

\begin{proof}
    For the sake of simplicity in the computations assume that $\tau=0$. If this is not true, the proof can be executed similarly, albeit with some minor modifications. Fix $z\in\D\setminus\{0\}$ and let $t>0$ and $\epsilon>1$. We may write $t=\frac{t}{\epsilon}+\frac{(\epsilon-1)t}{\epsilon}$. Then,
    \begin{equation}\label{eq:cor0}
    \gamma_z(t)=\gamma_z\left(\frac{t}{\epsilon}+\frac{(\epsilon-1)t}{\epsilon}\right)=\gamma_{\gamma_z\left(\frac{(\epsilon-1)t}{\epsilon}\right)}\left(\frac{t}{\epsilon}\right).
    \end{equation}
    Therefore, applying the upper bound of Theorem \ref{thm:elliptic}(a) in \eqref{eq:cor0} yields
    \begin{equation}\label{eq:cor1}
        |\gamma_z(t)|\le\left|\gamma_z\left(\frac{(\epsilon-1)t}{\epsilon}\right)\right|\exp\left(-\textup{Re}\lambda\frac{1-\left|\gamma_z\left(\frac{(\epsilon-1)t}{\epsilon}\right)\right|}{1+\left|\gamma_z\left(\frac{(\epsilon-1)t}{\epsilon}\right)\right|}\frac{t}{\epsilon}\right).
    \end{equation}
    Through a second application of the same theorem, we obtain
    \begin{equation}\label{eq:cor2}
        \left|\gamma_z\left(\frac{(\epsilon-1)t}{\epsilon}\right)\right|\le|z|\exp\left(-\textup{Re}\lambda\frac{1-|z|}{1+|z|}\frac{(\epsilon-1)t}{\epsilon}\right).
    \end{equation}
    Combining relations (\ref{eq:cor1}) and (\ref{eq:cor2}), we find
    $$|\gamma_z(t)|\le|z|\exp\left(-\textup{Re}\lambda\frac{1-|z|}{1+|z|}\frac{(\epsilon-1)t}{\epsilon}\right)\exp\left(-\textup{Re}\lambda\frac{1-\left|\gamma_z\left(\frac{(\epsilon-1)t}{\epsilon}\right)\right|}{1+\left|\gamma_z\left(\frac{(\epsilon-1)t}{\epsilon}\right)\right|}\frac{t}{\epsilon}\right),$$
    for all $z\in\D\setminus\{0\}$, $t>0$ and $\epsilon>1$. As a consequence,
    $$\log|\gamma_z(t)|\le\log|z|-\textup{Re}\lambda\frac{1-|z|}{1+|z|}\frac{(\epsilon-1)t}{\epsilon}-\textup{Re}\lambda\frac{1-\left|\gamma_z\left(\frac{(\epsilon-1)t}{\epsilon}\right)\right|}{1+\left|\gamma_z\left(\frac{(\epsilon-1)t}{\epsilon}\right)\right|}\frac{t}{\epsilon}.$$
    Noticing that $\lim_{t\to+\infty}\gamma_z\left(\frac{(\epsilon-1)t}{\epsilon}\right)=0$, since $\epsilon>1$, we see that
    \begin{equation}\label{eq:cor3}
    \limsup\limits_{t\to+\infty}\left(\frac{1}{t}\log|\gamma_z(t)|\right)\le-\textup{Re}\lambda\frac{1-|z|}{1+|z|}\frac{\epsilon-1}{\epsilon}-\textup{Re}\lambda\frac{1}{\epsilon},
    \end{equation}
    for all $\epsilon>1$. Letting $\epsilon\to1$ in \eqref{eq:cor3}, we deduce $\limsup_{t\to+\infty}(\log|\gamma_z(t)|/t)\le-\textup{Re}\lambda$. Following analogue ideas but this time using the lower bound of Theorem \ref{thm:elliptic}(a), we may also deduce a kind of reverse inequality for the lower limit. Hence, the limit exists and we have the desired limit.
\end{proof}

The backward analogue of the last result can be found in \cite[Proposition 4.20]{BCDMG} for regular backward orbits. For the non-regular ones that are associated to super-repelling fixed points, a similar result can be derived (as in Corollary \ref{cor:super-repelling}) using the arguments in the proof of Theorem \ref{thm:elliptic}(b).

\medskip

\section*{Acknowledgements}

We thank M. D. Contreras and L. Rodr\'{i}guez-Piazza for the insightful remarks they shared with us during the preparation of this work.

\end{document}